\crefname{assumption}{Assumption}{Assumptions}
\theoremstyle{plain}
\newtheorem{theorem}{Theorem}[section]
\newtheorem{corollary}[theorem]{Corollary}
\newtheorem{lemma}[theorem]{Lemma}
\newtheorem{proposition}[theorem]{Proposition}
\numberwithin{equation}{section}
\theoremstyle{definition}
\theoremstyle{remark}
\newtheorem{remark}[theorem]{Remark}
\newtheorem{assumption}[theorem]{Assumption}
\newtheorem{example}[theorem]{Example}
\setlist[itemize]{leftmargin=.5in}
\setlist[enumerate]{leftmargin=.5in,topsep=3pt,itemsep=3pt,label=(\roman*)}
\newcommand{\email}[1]{\href{#1}{#1}}
\newcommand{\TheTitle}{Eigenfunction martingale estimators for interacting particle systems and their mean field limit}
\newcommand{\TheAuthors}{G. A. Pavliotis, A. Zanoni}
\title{\TheTitle}
\author{Grigorios A. Pavliotis \thanks{Department of Mathematics, Imperial College London, London SW7 2AZ, UK, \email{g.pavliotis@imperial.ac.uk}}
\and Andrea Zanoni \thanks{Institute of Mathematics, École Polytechnique Fédérale de Lausanne, 1015 Lausanne, Switzerland, \email{andrea.zanoni@epfl.ch}}
}
\date{}
\newcommand{\abs}[1]{\left\lvert#1\right\rvert}
\newcommand{\norm}[1]{\left\|#1\right\|}
\renewcommand{\Pr}{\mathbb{P}}
\newcommand{\R}{\mathbb{R}}
\newcommand{\epl}{\varepsilon}
\newcommand{\defeq}{\coloneqq}
\newcommand{\eqdef}{\eqqcolon}
\newcommand{\E}{\operatorname{\mathbb{E}}}
\newcommand{\compl}{\mathsf{C}}
\renewcommand{\d}{\mathrm{d}}
\newcommand{\dd}{\,\mathrm{d}}
\definecolor{shade}{RGB}{100, 100, 100}
\definecolor{bordeaux}{RGB}{128, 0, 50}
\renewcommand*{\dot}[1]{\accentset{\mbox{\large\bfseries .}}{#1}}
\definecolor{leg1}{RGB}{0,114,189}
\definecolor{leg2}{RGB}{217,83,25}
\definecolor{leg3}{RGB}{237,177,32}
\definecolor{leg4}{RGB}{126,47,142}
\definecolor{leg5}{RGB}{119,172,48}
\definecolor{leg21}{RGB}{62,38,169}
\definecolor{leg22}{RGB}{46,135,247}
\definecolor{leg23}{RGB}{55,200,151}
\definecolor{leg24}{RGB}{254,195,56}
\begin{document}
	
\maketitle	

\begin{abstract} 
We study the problem of parameter estimation for large exchangeable interacting particle systems when a sample of discrete observations from a single particle is known. We propose a novel method based on martingale estimating functions constructed by employing the eigenvalues and eigenfunctions of the generator of the mean field limit, where the law of the process is replaced by the (unique) invariant measure of the mean field dynamics. We then prove that our estimator is asymptotically unbiased and asymptotically normal when the number of observations and the number of particles tend to infinity, and we provide a rate of convergence towards the exact value of the parameters. Finally, we present several numerical experiments which show the accuracy of our estimator and corroborate our theoretical findings, even in the case the mean field dynamics exhibit more than one steady states.
\end{abstract}

\textbf{AMS subject classifications.} 35Q70, 35Q83, 60J60, 62M15, 65C30.

\textbf{Key words.} Interacting particle systems, exchangeability, mean field limit, inference, Fokker--Planck operator, eigenvalue problem, martingale estimators.

\section{Introduction}

Interacting particle systems and, more generally interacting multiagent models, appear frequently in the natural and social sciences. In addition to the well known applications, e.g., plasma physics~\cite{Gol16} and stellar dynamics~\cite{BiT08}, new applications include, e.g., the modeling of chemotaxis~\cite{Suz05}, pedestrian dynamics~\cite{GSW19,MKT18}, crowd dynamics~\cite{MaF19}, urban modeling~\cite{EGP18}, models for opinion formation~\cite{GPY17,GGS21}, collective behavior~\cite{Daw83}, and models for systemic risk~\cite{GSS20}. In many of these applications, the phenomenological models involve unknown parameters that need to be estimated from data. This is particularly the case for multiagent models used in the social sciences and in economics, where no physics-informed choices of parameters are available. Learning parameters or even models, in a nonparametric setting, from data is becoming an increasingly important aspect of the overall mathematical modeling strategy. This is particularly the case in view of the huge quantity of available data in different areas, which allows the development of accurate data-driven techniques for learning parameters from data.

In this paper we study the problem of inference for systems of (weakly) interacting diffusions for which the mean field limit exists and is described by a nonlinear diffusion process of McKean type, obtained in the limit as the number of interacting processes $N$ goes to infinity. When the number of interacting stochastic differential equations (SDEs) is large, the inference problem can become computationally intractable and it is often useful to study the problem of parameter estimation for the limiting mean field SDE. This is related, but distinct, from the problem of inference for multiscale diffusions~\cite{AGP21,APZ21,GaZ21,PPS09,PaS07} where the objective is to learn the parameters in the homogenized (limiting) SDE from observations of the full dynamics. Our goal is to show how the inference methodology using eigenfunction martingale estimating functions that was applied in~\cite{APZ21} to multiscale diffusions can be modified so that it can also be applied to interacting diffusions with a well defined mean field limit. It is useful to keep in mind the analogy between the homogenization and mean field limits, in the context of parameter estimation.

Inference for large interacting systems has attracted considerable attention, starting from the work of Kasonga~\cite{Kas90}, in which the maximum likelihood estimator (MLE) was considered. In particular, it was proved that the MLE for estimating parameters in the drift, when the drift is linearly dependent on the parameters, given continuous time observations of \emph{all} the particles of the $N$-particle system, is consistent and asymptotically normal in the limit as $N \to \infty$. In this setting, it is possible to test whether the particles are interacting or not, at least in the linear case, i.e., for a system of interacting Ornstein--Uhlenbeck processes. Consistency and asymptotic normality of the sieve estimator and an approximate MLE estimator, i.e., when discrete observations of all the particles are given, was studied in~\cite{Bis11} in the same framework of linear dependence on the parameters for the drift and known diffusion coefficient. Moreover, MLE inference of the mean field Ornstein--Uhlenbeck SDE was also considered. Properties of the MLE for the McKean SDE, when a continuous path of the SDE is observed, were studied in~\cite{WWM16}. Consistency of the MLE was proved and an application to a model for ionic diffusion was presented. The MLE estimator for the McKean SDE was also considered in~\cite{LiQ20} and numerical experiments for the mean field Ornstein--Uhlenbeck process were presented. The combined large particle and long time asymptotics, $N \to \infty$ and $T \to \infty$, of the MLE for the case of a quadratic interaction, i.e., for interacting Ornstein--Uhlenbeck processes, was studied in~\cite{Che21}. Unlike the previous works mentioned in this literature review, the case where only a single particle trajectory is observed was considered in this paper. It was shown that the parameters in the drift can be estimated with optimal rate of convergence simultaneously in mean-field limit and in long-time dynamics. Offline and online inference for the McKean SDE was studied in~\cite{SKP21}. Consistency and asymptotic normality of the offline MLE for the interacting particle system in the limit as the number of particles $N \to \infty$ was shown. In addition, an online parameter estimator for the mean field SDE was proposed, which evolves according to a continuous-time stochastic gradient descent algorithm on the asymptotic log-likelihood of the interacting particle system.

In this paper we consider systems of exchangeable weakly interacting diffusions for which uniform propagation of chaos results are known~\cite{BRT98,BRV98,DeT21,Mal01,Oel84} and for which the mean field SDE has a unique invariant measure. We assume that we are given a sample of discrete-time observations of a single particle. Due to exchangeability, this amount of information should be sufficient to infer parameters in the mean field SDE, in the joint asymptotic limit as the number of observations and the number of particles go to infinity. Our approach consists of constructing martingale estimating functions~\cite{BiS95,KeS99} based on the eigenvalues and the eigenfunctions of the generator of the mean field dynamics. Then, our eigenfunction estimator is the zero of the estimating function. The martingale estimator based on the eigenfunctions of the generator was used to study the inference problem for multiscale diffusions in~\cite{APZ21}. Unlike the finite dimensional case, the mean field SDE is a measure-valued process and the generator is a nonlinear operator, dependent on the law of the process. A direct application of the martingale eigenfunction estimator would require the solution of a nonlinear eigenvalue problem that can be computationally demanding and that would also lead to eigenfunctions depending on time via their dependence on the law of the process. We circumvent this difficulty by replacing the law of the process with the (unique) invariant measure of the mean field dynamics. This leads to a standard Sturm--Liouville type of eigenvalue problem that we can analyze and also solve numerically at a low computational cost. In this paper we consider the framework where the invariant measure of the mean field SDE is unique. We remark, however, that our numerical experiments show that our methodology applies to McKean SDEs that exhibit phase transitions, i.e., that have multiple stationary measures, as long as we are below the transition point, or the form of the invariant measure is known up to a finite set of parameters, e.g., moments. 

When the mean field dynamics has a unique invariant measure, we first show the existence of the estimator with high probability when the number of available data and particles is large enough, and then analyze its consistency proving the asymptotic convergence towards the true value of the unknown parameter and providing a rate. Moreover, we prove that the estimator is  asymptotically normal. We also note that the relationship between the number of observations and particles plays an important role in the study of the asymptotic properties of the estimator, in particular the latter must be sufficiently greater than the former in order for the previous results to hold. We then present a series of numerical experiments which confirm our theoretical results and we show the advantages of our method with respect to the MLE. In particular, in contrast with our estimator, the MLE is biased when we have sparse observations, i.e., when the sampling rate $\Delta$ is far from the asymptotic limit $\Delta \to 0$. 

\paragraph{Main contributions.} The main contributions of our work are summarized below.
\begin{itemize}[leftmargin=0.5cm]
\item We propose a new methodology for estimating parameters in the drift of large interacting particle systems when a sequence of discrete observations of a single particle is given. Our proposed estimator is based on the eigenvalues and eigenfunctions of the generator of the mean field SDE at the steady state.
\item We show theoretically that our estimator is asymptotically unbiased and asymptotically normal in the limit as the number of observations and the number of particles go to infinity and we compute the rate of convergence.
\item We demonstrate numerically that our proposed estimator is reliable and robust with respect to the sampling rate.
\end{itemize}

\paragraph{Outline.} The rest of the paper is organized as follows. In \cref{sec:problem_setting} we introduce the framework of the problem under investigation and we present the main theoretical results, and in \cref{sec:numerical_experiments} we show several numerical experiments illustrating the potentiality of our approach. Finally, \cref{sec:proofs} is devoted to the proofs of the main theorems.

\section{Problem setting} \label{sec:problem_setting}

In this work we consider a system of interacting particles in one dimension moving in a confining potential over the time interval $[0,T]$ whose interaction is governed by an interaction potential
\begin{equation} \label{eq:SDE_N}
\begin{aligned}
\dd X_t^{(n)} &= - V'(X_t^{(n)}; \alpha) \dd t - \frac{1}{N} \sum_{i=1}^N W'(X_t^{(n)} - X_t^{(i)}; \kappa) \dd t + \sqrt{2\sigma} \dd B_t^{(n)}, \qquad n = 1, \dots, N, \\
X_0^{(n)} &\sim \nu, \qquad n = 1, \dots, N,
\end{aligned}
\end{equation}
where $N$ is the number of particles, $\{ B_t^{(n)} \}_{n=1}^N$ are standard independent one dimensional Brownian motions, $V(\cdot; \alpha)$ and $W(\cdot; \kappa)$ are the confining and interaction potentials, respectively, which depend on some parameters $\alpha \in \R^{p_1}, \kappa \in \R^{p_2}$, and $\sigma > 0$ is the diffusion coefficient. The functions $V'$ and $W'$ are then the derivatives of $V$ and $W$ with respect to their first argument. We assume chaotic initial conditions, i.e., that the particles are initially distributed according to the same measure $\nu$.

\begin{remark} \label{rem:dimension}
We consider the case when the particles move in one dimension for the clarity of exposition. In fact, the proposed method and our rigorous results can be easily generalized to the case of $N$ interacting particles moving in dimension $d > 1$. However in higher dimensions the problem becomes more complex and expensive from the computational point of view.
\end{remark}

We place ourselves in the same framework of \cite{Mal01}, which is summarized in the following assumption.

\begin{assumption} \label{ass:potential}
The confining and interaction potentials $V$ and $W$, respectively, satisfy:
\begin{itemize}[leftmargin=0.5cm]
\item $V(\cdot; \alpha) \in \mathcal C^2(\R)$ is uniformly convex and polynomially bounded along with its derivatives uniformly in $\alpha$;
\item $W(\cdot; \kappa) \in \mathcal C^2(\R)$ is even, convex and polynomially bounded along with its derivatives uniformly in $\kappa$.
\end{itemize}
\end{assumption}

It is well-known (see, e.g., \cite[Chapter 4]{Pav14}) that under \cref{ass:potential} the dynamics described by the system \eqref{eq:SDE_N} is geometrically ergodic with unique invariant measure given by the Gibbs measure $\mu^N_\theta(\dd \mathbf x) = \rho^N(\mathbf x; \theta) \dd \mathbf x$, where 
\begin{equation}
\rho^N(\mathbf x; \theta) = \frac{1}{Z^N} \exp \left\{- \frac1\sigma E^N(\mathbf x; \theta) \right\}, \qquad Z^N = \int_{\R^N} \exp \left\{- \frac1\sigma E^N(\mathbf x; \theta) \right\} \dd \mathbf x,
\end{equation}
and $E^N(\cdot; \theta)$ is defined by
\begin{equation}
E^N(\mathbf x; \theta) \defeq \sum_{n=1}^N V(x_n; \alpha) + \frac{1}{2N} \sum_{n=1}^N \sum_{i=1}^N W(x_n - x_i; \kappa).
\end{equation}
for $\theta = \begin{pmatrix} \alpha^\top & \kappa^\top \end{pmatrix}^\top \in \Theta \subseteq \R^p$ with $p = p_1 + p_2$ and $\Theta$ the set of admissible parameters. The main goal of this paper is the estimation of the unknown parameter $\theta \in \Theta$, given discrete observations of the path of one single particle. We are interested in applications involving large interacting particle systems, i.e., when $N \gg 1$, hence studying the whole system is not practical and can be computationally unfeasible. Therefore, our approach consists of considering the mean field limit which has already been thoroughly studied (see, e.g., \cite{Daw83,Gar88}). Letting the number of particles $N$ go to infinity we obtain the nonlinear, in the sense of McKean, SDE
\begin{equation} \label{eq:SDE_MFL}
\begin{aligned}
\dd X_t &= - V'(X_t; \alpha) \dd t - (W'(\cdot; \kappa) * u(\cdot, t; \theta)) (X_t) \dd t + \sqrt{2\sigma} \dd B_t, \\
X_0 &\sim \nu,
\end{aligned}
\end{equation}
where $u(\cdot, t; \theta)$ is the density with respect to the Lebesgue measure of the law of $X_t$ and the nonlinearity means that the drift of the SDE \eqref{eq:SDE_MFL} depends on the law of the process. The density $u$ is the solution of the nonlinear Fokker--Planck (McKean--Vlasov) equation
\begin{equation} \label{eq:McKeanVlasov}
\frac{\partial u}{\partial t} (x,t) = \frac{\partial}{\partial x} \left( V'(x;\alpha) u(x,t; \theta) + (W'(\cdot; \kappa) * u(\cdot, t; \theta))(x,t) u(x,t; \theta) + \sigma \frac{\partial u}{\partial x}(x,t) \right),
\end{equation}
with initial condition $u(x,0; \theta) \dd x = \nu(\dd x)$. It is well known that, in contrast to the finite dimensional dynamics, the mean field limit \eqref{eq:SDE_MFL} can have, in the non-convex case more than one invariant measures $\mu_\theta(\dd x ) = \rho(x; \theta) \dd x$ \cite{CGP20,Daw83}. The density of the stationary state(s) satisfies the stationary Fokker--Planck equation
\begin{equation} \label{eq:McKeanVlasov_stationary}
\frac{\d}{\dd x} \left( V'(x;\alpha) \rho(x; \theta) + (W'(\cdot; \kappa) * \rho(\cdot; \theta))(x) \rho(x; \theta) + \rho'(x; \theta) \right) = 0,
\end{equation}
where the second variable $\theta$ emphasizes the fact that $\rho$ depends on the parameters $\alpha$ and $\kappa$ of the potentials $V$ and $W$, respectively. However, under \cref{ass:potential} it has been proven in \cite{Mal01} that there exists a unique invariant measure which is the solution of 
\begin{equation} \label{eq:invariant density}
\rho(x;\theta) = \frac1Z \exp \left\{-\frac1\sigma \left( V(x;\alpha) + (W(\cdot;\kappa) * \rho(\cdot; \theta))(x) \right) \right\},
\end{equation}
where $Z$ is the normalization constant 
\begin{equation}
Z = \int_\R \exp \left\{-\frac1\sigma \left( V(x;\alpha) + (W(\cdot;\kappa) * \rho(\cdot; \theta))(x) \right) \right\} \dd x.
\end{equation}

\begin{example} \label{exa:CW}
A particular choice for the interaction potential is the Curie--Weiss quadratic interaction \cite{Daw83}, which is also known as harmonic potential. We take $\kappa > 0$ and consider the confining potential
\begin{equation}
W(x;\kappa) = \frac\kappa2 x^2.
\end{equation}
The interacting particles system \eqref{eq:SDE_N} becomes, for all $n = 1, \dots, N$
\begin{equation} \label{eq:SDE_N_CW}
\dd X_t^{(n)} = - V'(X_t^{(n)}; \alpha) \dd t - \kappa \left( X_t^{(n)} - \bar X_t^N \right) \dd t + \sqrt{2\sigma} \dd B_t^{(n)},
\end{equation}
where $\bar X_t^N$ denotes the empirical mean
\begin{equation}
\bar X_t^N = \frac{1}{N} \sum_{i=1}^N X_t^{(i)}.
\end{equation}
This interaction term creates a tendency for the particles to relax toward the center of gravity of the ensemble and the parameter $\kappa$ measures the strength of the interaction between the agents, hence this model provides a simple example of cooperative interaction. 

The mean field limit \eqref{eq:SDE_MFL} then becomes
\begin{equation} \label{eq:SDE_MFL_CW}
\dd X_t = - V'(X_t; \alpha) \dd t - \kappa \left( X_t - m_t \right) \dd t + \sqrt{2\sigma} \dd B_t,
\end{equation}
where $m_t$ denotes the expectation of $X_t$, $m_t = \E[X_t]$, and its unique (when the confining potential $V$ is convex) invariant measure $\mu_\theta(\dd x) = \rho(x; \theta) \dd x$ is given by
\begin{equation} \label{eq:rho_CW}
\rho(x; \theta) = \frac1Z \exp \left\{-\frac1\sigma \left( V(x; \alpha) + \kappa \left( \frac12 x^2 - mx \right) \right) \right\},
\end{equation}
with the constraint for the expectation with respect to the invariant measure
\begin{equation}\label{e:self_consist}
m = \int_\R x \rho(x; \theta) \dd x,
\end{equation}
and where
\begin{equation}
Z = \int_\R \exp \left\{ -\frac1\sigma \left( V(x; \alpha) + \kappa \left( \frac12 x^2 - mx \right) \right) \right\} \dd x.
\end{equation}
Equation \eqref{e:self_consist} is the self-consistency equation \cite{Daw83,Fra05,GoP18} that enables us to calculate the invariant measure and, then, the stationary state(s). In the case where the confining potential is quadratic, we have a system linear SDEs and the mean field limit reduces to the mean field Ornstein-Uhlenbeck SDE. In this case the first moment vanishes, $m = 0$, and the invariant measure is unique (this is the case, of course, of arbitrary strictly convex confining potentials). The inference problem for the linear interacting particle system and for the corresponding mean field limit is easier than that of the general case. We emphasize that, unlike this present work, most earlier papers, e.g., \cite{Bis11,Kas90}, focus on this linear case, i.e., on systems of weakly interacting linear stochastic differential equations. The estimator proposed and studied in this paper can be applied to arbitrary non-quadratic interaction and confining potentials.
\end{example}

\subsection{Parameter estimation problem}

We now present our method for the estimation of the unknown parameter $\theta = (\alpha, \kappa) \in \Theta \subseteq \R^p$, given discrete observation of a single particle of the system \eqref{eq:SDE_N}. Consider $M+1$ equidistant observation times $0 = t_0 < t_1 < \dots < t_M = T$, let $\Delta = t_m - t_{ m-1}$ be the sampling rate and let $(X_t^{(n)})_{t \in [0,T]}$ be a realization of the $n$-th particle of the solution of the system \eqref{eq:SDE_N} for some $n = 1, \dots, N$. We then aim to estimate the unknown parameter $\theta$ given a sample $\{ \widetilde X_m^{(n)} \}_{m=0}^M$ of the realization where $\widetilde X_m^{(n)} = X^{(n)}_{t_m}$ and $t_m = \Delta m$. We want to construct martingale estimating functions based on the eigenfunctions and the eigenvalues of the generator of the dynamics, a technique which was initially proposed in \cite{KeS99} for single-scale SDEs and then successfully applied to multiscale SDEs in \cite{APZ21}. In principle, the methodology developed in \cite{KeS99} can be applied to the $N-$particle system. However, this would require solving the eigenvalue problem for the generator of an $N-$dimensional diffusion process, which is computationally expensive. Moreover, our fundamental assumption is that are observing a single particle and thus we do not have a complete knowledge of the system. Therefore, we construct the martingale estimating functions employing the generator of the mean field dynamics, which is a good approximation of the path of a single particle when the number $N$ of particles is large \cite{Szn91}. Let $\mathcal L_t$ be the generator of the mean field limit SDE \eqref{eq:SDE_MFL}
\begin{equation}
\mathcal L_t = - \left( V'(\cdot; \alpha) + (W'(\cdot; \kappa) * u(\cdot, t; \theta)) \right) \frac{\d}{\dd x} + \sigma \frac{\d^2}{\dd x^2},
\end{equation}
and let $\mathcal L$ be the generator obtained replacing the density $u(\cdot, t; \theta)$ with the density $\rho(\cdot; \theta)$ of the invariant measure $\mu_\theta$
\begin{equation}
\mathcal L = - \left( V'(\cdot; \alpha) + (W'(\cdot; \kappa) * \rho(\cdot; \theta)) \right) \frac{\d}{\dd x} + \sigma \frac{\d^2}{\dd x^2}.
\end{equation}
We remark that now the generator $\mathcal L$ is time-independent. We then consider the eigenvalue problem $- \mathcal L \phi(\cdot;\theta) = \lambda(\theta) \phi(\cdot;\theta)$, which reads
\begin{equation} \label{eq:eigenvalue_problem_equation}
\sigma \phi''(x;\theta) - \left( V'(x; \alpha) + (W'(\cdot; \kappa) * \rho(\cdot; \theta))(x) \right) \phi'(x;\theta) + \lambda(\theta) \phi(x;\theta) = 0,
\end{equation}
and from the well-known spectral theory of diffusion processes (see, e.g., \cite{HST98}) we deduce the existence of a countable set of eigenvalues $0 = \lambda_0(\theta) < \lambda_1(\theta) < \cdots < \lambda_j(\theta) \uparrow \infty$ whose corresponding eigenfunctions $\{ \phi_j(\cdot; \theta) \}_{j = 0}^\infty$ form an orthonormal basis of the weighted space $L^2(\rho(\cdot; \theta))$. In fact, even if the SDE \eqref{eq:SDE_MFL} is nonlinear, when $X_0 \sim \rho(\cdot; \theta)$ then the solution $X_t$ behaves like a classic diffusion process with drift function $- V'(\cdot; \alpha) - W'(\cdot; \kappa) * \rho(\cdot; \theta)$, hence the spectral theory for diffusion processes still holds. We also state here the variational formulation of the eigenvalue problem, which will be employed to implement numerically the proposed methodology. Let $\varphi$ be a test function and multiply equation \eqref{eq:eigenvalue_problem_equation} by $\varphi \rho(\cdot;\theta)$, where the density $\rho(\cdot;\theta)$ of the invariant measure $\mu_\theta$ is defined in \eqref{eq:invariant density}. Then, integrating over $\R$ and by parts we obtain
\begin{equation}
\sigma \int_\R \phi'(x;\theta) \varphi'(x) \rho(x;\theta) \dd x = \lambda(\theta) \int_\R \phi(x;\theta) \varphi(x) \rho(x;\theta) \dd x.
\end{equation}
We are now ready to present how to employ the eigenvalue problem in the construction of the martingale estimation function and afterwords in the definition of our estimator. Let $J$ be a positive integer and let $\psi_j(\cdot; \theta) \colon \R \to \R^p$ for $j = 1, \dots, J$ be arbitrary functions dependent on the parameter $\theta$ which satisfy \cref{ass:functions_psi} below, and define the martingale estimating function $G_{M,N}^J \colon \Theta \to \R^p$ as
\begin{equation}
G_{M,N}^J(\theta) \defeq \frac1M \sum_{m=0}^{M-1} \sum_{j=1}^J g_j(\widetilde X_m^{(n)}, \widetilde X_{m+1}^{(n)}; \theta),
\end{equation} 
where 
\begin{equation} \label{eq:def_g}
g_j(x,y;\theta) \defeq \psi_j(x;\theta) \left( \phi_j(y;\theta) - e^{-\lambda_j(\theta) \Delta} \phi_j(x;\theta) \right),
\end{equation}
and $\{ \widetilde X_m^{(n)} \}_{m=0}^M$ is the set of observations of the $n$-th particle from the system with $N$ particles. The estimator we propose is then given by the solution $\widehat \theta_{M,N}^J$ of the $p$-dimensional nonlinear system 
\begin{equation} \label{eq:system_zero}
G_{M,N}^J(\theta) = \mathbf 0,
\end{equation}
where $\mathbf 0 \in \R^p$ denotes the vector with all components equal to zero. An intuition on why considering the solution of equation (2.8) as a good estimator is the following and will be more clear later. Let $\mathbb G^J_M$ defined in \eqref{eq:def_Hfunctions} be the estimating function where the observations from the inetracting particle system have been replaced by the observations from the corresponding mean field limit. Then, employing formula \eqref{eq:formula_martingale} we have
\begin{equation}
\E^{\mu_{\theta_0}} \left[ \mathbb G^J_M(\theta_0) \right] = 0,
\end{equation}
which means that the zero of the expectation of the estimating function with observations from the mean field limit is exactly the true unknown coefficient. The main steps needed to obtain the estimator $\widehat \theta_{M,N}^J$ are summarized in \cref{alg:procedure}. For further details about the implementation and for discussions about the choice of the arbitrary functions $\{ \psi_j(\cdot;\theta) \}_{j=1}^J$ we refer to Appendix B and Remark 2.6 in \cite{APZ21}.

\begin{remark} \label{rem:knowledge_invariant_measure}
The main limitation of our approach is that the knowledge of the invariant measure is required in order to construct the martingale estimating function (step 1 in \cref{alg:procedure}). However, it is often the case that the invariant measure is known up to a set of parameters, such as moments, i.e., only the functional form of the invariant measure is known. These parameters (moments) are obtained by solving appropriate self-consistency equations \cite[Section 2.3]{Fra05}. When such a situation arises, it is possible to first learn these parameters using the available data, e.g., estimate the moments that appear in the invariant measure by employing the law of large numbers. Then, we are in the setting where our technique applies and we can proceed in the same way, as shown in the numerical experiments in \cref{sec:bistable,sec:num_nonsymmetric}. In summary, it is sufficient to replace step 1 in \cref{alg:procedure} with ``estimate the moments in the invariant measure $\rho(\cdot;\theta)$''.
\end{remark}

We finally introduce a technical hypothesis which will be needed for the proofs of our main results.

\begin{assumption} \label{ass:functions_psi}
Let $\Theta \subseteq \R^p$ be a compact set. Then the following hold for all $\theta \in \Theta$ and for all $j = 1, \dots, J$:
\begin{enumerate}
\item $\psi_j(x;\theta)$ is continuously differentiable with respect to $\theta$ for all $x \in \R$;
\item all components of $\psi_j(\cdot;\theta)$, $\psi_j'(\cdot;\theta)$, $\dot{\psi}_j(\cdot;\theta)$, $\dot \psi_j'(\cdot;\theta)$ are polynomially bounded uniformly in $\theta$;
\item the potentials $V$ and $W$ are such that $\phi_j(\cdot;\theta)$, $\phi_j'(\cdot;\theta)$ and all components of $\dot \phi_j(\cdot;\theta)$, $\dot \phi_j'(\cdot;\theta)$ are polynomially bounded uniformly in $\theta$;
\end{enumerate}
where the dot denotes either the Jacobian matrix or the gradient with respect to $\theta$.
\end{assumption}

\begin{remark} \label{rem:continuityG}
\cref{ass:functions_psi}(i) together with \cite[Sections 2 and 6]{Sch74} gives the continuous differentiability of the vector-valued function $G_{M,N}^J(\theta)$ with respect to the unknown parameter $\theta$.
\end{remark}

\begin{algorithm}
\caption{Estimation of $\theta \in \Theta$} \label{alg:procedure}
\begin{tabbing}
\hspace*{\algorithmicindent} \textbf{Input:} \= Observations $\{ \widetilde X_m^{(n)} \}_{m=0}^M$. \\
\> Distance between two consecutive observations $\Delta$. \\
\> Number of eigenvalues and eigenfunctions $J$. \\
\> Functions $\left\{ \psi_j(x;\theta) \right\}_{j=1}^J$. \\
\> Confining potential $V$ and interaction potential $W$. \\
\> Diffusion coefficient $\sigma$.
\end{tabbing}
\begin{tabbing}
\hspace*{\algorithmicindent} \textbf{Output:} \= Estimation $\widehat \theta_{M,N}^J$ of $\theta$. \\
\end{tabbing}
\begin{enumerate}[label=\arabic*:,itemsep=5pt]
\item Find the invariant measure $\rho(\cdot; \theta)$.
\item Consider the equation \\ $\sigma \phi''(x;\theta) - \left( V'(x; \alpha) + (W'(\cdot; \kappa) * \rho(\cdot; \theta))(x) \right) \phi'(x;\theta) + \lambda(\theta) \phi(x;\theta) = 0$.
\item Compute the first $J$ eigenvalues $\left\{ \lambda_j(\theta) \right\}_{j=1}^J$ and eigenfunctions $\left\{ \phi_j(\cdot;\theta) \right\}_{j=1}^J$.
\item Construct the function $g_j(x,y;\theta) = \psi_j(x;\theta) \left( \phi_j(y;\theta) - e^{-\lambda_j(\theta)\Delta} \phi_j(x;\theta) \right)$.
\item Construct the score function $G_{M,N}^J(\theta) = \frac1M \sum_{m=0}^{M-1} \sum_{j=1}^J g_j(\widetilde X_m^{(n)}, \widetilde X_{m+1}^{(n)}; \theta)$.
\item Let $\widehat \theta_{M,N}^J$ be the solution of the nonlinear system $G_{M,N}^J(\theta) = \mathbf 0$.
\end{enumerate}
\end{algorithm}

\begin{remark} \label{rem:diffusion}
In this paper we always assume that the diffusion coefficient $\sigma$ in \eqref{eq:SDE_N} is known. We remark that this is not an essential limitation of our methodology; in fact, if the diffusion coefficient is also unknown, we can consider the parameter set to be estimated to be $\widetilde \theta = (\theta, \sigma) = (\alpha, \kappa, \sigma) \in \R^{p+1}$ and repeat the same procedure. The estimator is then obtained as the solution of the nonlinear system of dimension $p+1$ corresponding to \eqref{eq:system_zero}. A numerical experiment illustrating this procedure is given in \cref{sec:num_diffusion}. Moreover, our main theoretical results remain valid and the proofs do not need any major changes. Alternatively, if the sampling rate is sufficiently small, it is possible to first estimate the diffusion coefficient using the quadratic variation and then proceed with the methodology proposed in this paper.
\end{remark}

\begin{example} \label{exa:CW_eigen}
Let us consider the Curie--Weiss quadratic interaction introduced in \cref{exa:CW} as well as a quadratic Ornstein--Uhlenbeck confining potential $V(x; \alpha) = \frac12 x^2$. In this case the only unknown parameter is $\kappa$ and the eigenvalue problem \eqref{eq:eigenvalue_problem_equation} reads
\begin{equation} \label{eq:eigenvalue_problem_CWOU}
\sigma \phi''(x;\theta) - (1 + \kappa) x \phi'(x;\theta) + \lambda(\theta) \phi(x;\theta) = 0,
\end{equation}
so that the eigenvalue and eigenfunctions can be computed analytically \cite[Section 3.1]{APZ21}. In particular, the first eigenvalue and eigenfunction are given by $\lambda_1(\theta) = 1 + \kappa$ and $\phi_1(x;\theta) = x$, respectively. Therefore, letting $\psi_1(x;\theta) = x$ we have an explicit expression for our estimator
\begin{equation} \label{eq:estimator_OU}
\widehat \theta_{M,N}^1 = - 1 - \frac1\Delta \log \left( \frac{\sum_{m=0}^{M-1} \widetilde X_m^{(n)} \widetilde X_{m+1}^{(n)}}{\sum_{m=0}^{M-1} (\widetilde X_m^{(n)})^2} \right).
\end{equation}
For additional details regarding the eigenvalue problem \eqref{eq:eigenvalue_problem_CWOU} we refer to \cite[Section 3.1]{APZ21}. We also remark that when the drift coefficient of the Ornstein--Uhlenbeck process is unknown, i.e., if we consider the confining potential $V(x; \alpha) = \frac\alpha2 x^2,$ then the eigenvalue problem reads
\begin{equation}
\sigma \phi''(x;\theta) - (\alpha + \kappa) x \phi'(x;\theta) + \lambda(\theta) \phi(x;\theta) = 0,
\end{equation}
which only depends on the sum $\alpha + \kappa$ and not on the single parameters alone. Therefore, in this case it is not possible to estimate the unknown coefficients $\alpha$ and $\kappa$, but we can only estimate their sum. This is in contrast with the set up in \cite{Kas90}, where \emph{all} the particles are observed in continuous time. When this amount of information is available, it is possible to check whether or not the particles are interacting, i.e., to check whether $\kappa = 0$ or not (see \cite[Section 4]{Kas90}). 
\end{example}

\subsection{Main results}

In this section we present the main theoretical results of this work. In particular, we prove that our estimator $\widehat \theta_{M,N}^J$ is asymptotically unbiased (consistent) and asymptotically normal as the number of observations $M$ and particles $N$ go to infinity and we compute the rate of convergence towards the true value of the parameter, which we denote by $\theta_0$. Part of the proof of the consistency of the estimator, which will be presented in detail in \cref{sec:proofs}, is inspired by our previous work \cite[Section 5]{APZ21}. In this paper we studied the asymptotic properties of a similar estimator for multiscale SDEs letting the number of observations go to infinity and the multiscale parameter vanish. The proofs or our results in the present work also requires us to perform a rigorous asymptotic analysis with respect to two parameters, the number of observations and the number of particles. 

We first define the Jacobian matrix of the function $g_j$ introduced in \eqref{eq:def_g} with respect to the parameter $\theta$, with $\otimes$ denoting the outer product in $\R^p$,
\begin{equation}
\begin{aligned}
h_j(x,y;\theta) &\defeq \dot g_j(x,y;\theta) \\
&= \dot \psi_j(x;\theta) \left( \phi_j(y;\theta) - e^{-\lambda_j(\theta)\Delta}\phi_j(x;\theta) \right) \\
&\quad + \psi_j(x;\theta) \otimes \left( \dot \phi_j(y;\theta) - e^{-\lambda_j(\theta)\Delta} \left( \dot \phi_j(x;\theta) - \Delta \dot \lambda_j(\theta)\phi_j(x,\theta) \right) \right),
\end{aligned}
\end{equation}
as well as the following quantity
\begin{equation}
\ell_{j,k}(x,y;\theta) \defeq \left( \psi_j(x;\theta) \otimes \psi_k(x;\theta) \right) \left( \phi_j(y;\theta) \phi_k(y;\theta) - e^{-(\lambda_j(\theta) + \lambda_k(\theta))\Delta} \phi_j(x;\theta) \phi_k(x;\theta) \right).
\end{equation}
We remark that whenever we write $\E^{\mu_\theta}$ we mean that $X_0 \sim \mu_\theta$ and similarly for the other probability measures. 

We now present our main results. In \cref{thm:main_unbiased} we prove that our estimator is consistent.

\begin{theorem} \label{thm:main_unbiased}
Let $J$ be a positive integer and let $\{ \widetilde X_m^{(n)} \}_{m = 1}^M$ be a set of observations obtained by system \eqref{eq:SDE_N} with true parameter $\theta_0$. Under \cref{ass:potential,ass:functions_psi} and if
\begin{equation} \label{eq:technical_assumption}
\det \left( \sum_{j=1}^J \E^{\mu_{\theta_0}} \left[ h_j(X_0, X_\Delta; \theta_0) \right] \right) \neq 0,
\end{equation}
there exists $N_0 > 0$ such that for all $N > N_0$ an estimator $\widehat \theta_{M,N}^J$, which solves the system $G_{M,N}^J(\theta) = 0$, exists with probability tending to one as $M$ goes to infinity. Moreover, the estimator $\widehat \theta_{M,N}^J$ is asymptotically unbiased, i.e., 
\begin{align}
\lim_{N \to \infty} \lim_{M \to \infty} \widehat \theta_{M,N}^J &= \theta_0, \qquad \text{in probability}, \label{eq:limit_1} \\
\lim_{M \to \infty} \lim_{N \to \infty} \widehat \theta_{M,N}^J &= \theta_0, \qquad \text{in probability}, \label{eq:limit_2}
\end{align}
and if $M = o(N)$
\begin{equation}
\lim_{M,N \to \infty} \widehat \theta_{M,N}^J = \theta_0, \qquad \text{in probability}. \label{eq:limit_3}
\end{equation}
\end{theorem}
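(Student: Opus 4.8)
Viewing $\widehat\theta_{M,N}^J$ as a $Z$-estimator, I would proceed in three stages: first identify a deterministic limiting estimating function having $\theta_0$ as a nondegenerate zero; then show that $G_{M,N}^J$ converges to it, uniformly in $\theta$ on the compact set $\Theta$ and in the appropriate stochastic sense, in each of the three regimes; and finally deduce existence and consistency of the zero by a topological argument. The crucial structural observation for the first stage is that when the mean field SDE \eqref{eq:SDE_MFL} is started from its unique invariant measure $\mu_{\theta_0}$, stationarity freezes the law, $u(\cdot,t)\equiv\rho(\cdot;\theta_0)$, so the solution coincides in law with the \emph{linear} diffusion generated by $\mathcal L$ at $\theta=\theta_0$; since $\phi_j(\cdot;\theta_0)$ is the $j$-th eigenfunction of $-\mathcal L$ with eigenvalue $\lambda_j(\theta_0)$, one has $\E^{\mu_{\theta_0}}[\phi_j(X_\Delta;\theta_0)\mid X_0]=e^{-\lambda_j(\theta_0)\Delta}\phi_j(X_0;\theta_0)$, hence $\E^{\mu_{\theta_0}}[g_j(X_0,X_\Delta;\theta_0)]=\mathbf 0$ for every $j$. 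Setting
\begin{equation*}
\bar G^J(\theta)\defeq\sum_{j=1}^J\E^{\mu_{\theta_0}}\!\left[g_j(X_0,X_\Delta;\theta)\right],
\end{equation*}
with the expectation taken along the stationary mean field dynamics, we therefore have $\bar G^J(\theta_0)=\mathbf 0$. Differentiating under the expectation — justified by the polynomial bounds of \cref{ass:functions_psi}(ii)--(iii) together with uniform-in-time moment estimates for \eqref{eq:SDE_MFL} — gives $\dot{\bar G}^J(\theta_0)=\sum_{j=1}^J\E^{\mu_{\theta_0}}[h_j(X_0,X_\Delta;\theta_0)]$, which is invertible by \eqref{eq:technical_assumption}, and $\bar G^J$ is $C^1$ near $\theta_0$ by \cref{rem:continuityG}, so its topological degree on $\partial B(\theta_0,r)$ equals $\pm1$ for all sufficiently small $r>0$.

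For the second stage I would prove $\sup_{\theta\in\Theta}|G_{M,N}^J(\theta)-\bar G^J(\theta)|\to 0$ in probability in each regime, splitting the error into a time-averaging (ergodic) part and a propagation-of-chaos part, both handled uniformly in $\theta$ using \cref{ass:functions_psi}(ii)--(iii) and uniform-in-$N$ moment bounds for \eqref{eq:SDE_N}. In the order ``$M\to\infty$ then $N\to\infty$'', geometric ergodicity of the $N$-particle system under \cref{ass:potential} (see \cite[Chapter~4]{Pav14}) gives, for each fixed $N$, $G_{M,N}^J(\theta)\to G_N^J(\theta)\defeq\sum_{j}\E^{\mu_{\theta_0}^N}[g_j(\widetilde X_0^{(n)},\widetilde X_1^{(n)};\theta)]$ almost surely and uniformly in $\theta$ (the exchange of $M\to\infty$ with the finite sum over $j$ is immediate), and then uniform propagation of chaos \cite{Mal01} yields $G_N^J(\theta)\to\bar G^J(\theta)$ as $N\to\infty$. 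In the opposite order ``$N\to\infty$ then $M\to\infty$'', for each fixed $M$ propagation of chaos identifies the limit of $G_{M,N}^J(\theta)$ with the time average $\frac1M\sum_{m=0}^{M-1}\sum_j\E^{\nu}[g_j(X_{t_m},X_{t_{m+1}};\theta)]$ along the \emph{nonlinear} mean field trajectory started from $\nu$; the eigenfunction martingale identity fails along the transient, but letting $M\to\infty$ and invoking the exponential relaxation of \eqref{eq:McKeanVlasov} to $\rho(\cdot;\theta_0)$ — which holds under the convexity in \cref{ass:potential} by \cite{Mal01} — washes out the transient and recovers $\bar G^J(\theta)$, proving \eqref{eq:limit_1} and \eqref{eq:limit_2}.

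For the joint regime I would make these two estimates quantitative: the time-averaging fluctuation of $G_{M,N}^J(\theta_0)$ around its ergodic value is $\mathcal O_{\PP}(M^{-1/2})$ by a mixing (martingale) argument, the nonlinear transient contributes a bias exponentially small in $T=M\Delta$, and uniform propagation of chaos \cite{Mal01} controls the discrepancy between the observed particle and the mean field limit accumulated over the horizon $T=M\Delta$ by a term of order $(M/N)^{1/2}$; the hypothesis $M=o(N)$ is precisely what makes this last contribution vanish, so that $G_{M,N}^J(\theta_0)\to\mathbf 0$ and $\dot G_{M,N}^J(\theta_0)\to\dot{\bar G}^J(\theta_0)$ in probability as $M,N\to\infty$.

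Finally, for the third stage, continuity of $\bar G^J$, $\bar G^J(\theta_0)=\mathbf 0$ and invertibility of $\dot{\bar G}^J(\theta_0)$ imply that for small $r$ the degree of $\bar G^J$ on $B(\theta_0,r)$ is $\pm1$; uniform convergence $G_{M,N}^J\to\bar G^J$ on $\overline{B(\theta_0,r)}$ then forces, with probability tending to one, a zero $\widehat\theta_{M,N}^J$ of $G_{M,N}^J$ inside $B(\theta_0,r)$ — for \eqref{eq:limit_1} one first fixes $N>N_0$ large enough that $\dot G_N^J(\theta_0)$ is invertible and $G_N^J(\theta_0)$ is close to $\mathbf 0$, then lets $M\to\infty$ — and letting $r\downarrow0$ (equivalently, running the quantitative inverse-function argument of \cite[Section~5]{APZ21}) gives \eqref{eq:limit_1}--\eqref{eq:limit_3}. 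The main obstacle is the joint limit: one must propagate the propagation-of-chaos error \emph{uniformly over the $M$ correlated, $N$-dependent time steps} appearing in $G_{M,N}^J$ and combine it with the ergodic fluctuation, which is exactly where the balance $M=o(N)$ is forced; a secondary difficulty is the out-of-equilibrium analysis showing that the nonlinearity of \eqref{eq:SDE_MFL} perturbs the eigenfunction martingale identity only by a term annihilated in the long-time average, for which the convexity assumption — hence the exponential ergodicity established in \cite{Mal01} — is essential.
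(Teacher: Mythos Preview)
Your overall architecture is the same as the paper's: introduce the deterministic limit $\mathscr G^J(\theta)=\bar G^J(\theta)$, check $\mathscr G^J(\theta_0)=\mathbf 0$ via the eigenfunction identity and $\det\mathscr H^J(\theta_0)\neq 0$ by \eqref{eq:technical_assumption}, establish convergence of $G_{M,N}^J$ to $\mathscr G^J$ through two intermediate objects (one per order of limits), and transfer zeros by an inverse/implicit-function argument. The paper packages these steps as Lemmas~5.3--5.6 and then chains the \emph{zeros} $\vartheta_N^J$, $\widehat\vartheta_M^J$ rather than the estimating functions, but the substance is the same as your degree route.

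There is, however, one genuine slip in your treatment of the order ``$N\to\infty$ then $M\to\infty$''. You write that propagation of chaos sends $G_{M,N}^J(\theta)$ to the \emph{deterministic} time average $\frac1M\sum_m\sum_j\E^{\nu}[g_j(X_{t_m},X_{t_{m+1}};\theta)]$. It does not: the coupling result $\sup_t\E|X_t^{(n)}-X_t|^2\le C/N$ from \cite{Mal01} is pathwise (same Brownian motion), so for fixed $M$ the limit is the \emph{random} functional
\[
\mathbb G_M^J(\theta)=\frac1M\sum_{m=0}^{M-1}\sum_{j=1}^J g_j(\widetilde X_m,\widetilde X_{m+1};\theta)
\]
built from the mean field trajectory $(X_t)$ itself, not its law. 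This matters because your subsequent step ``let $M\to\infty$ and use exponential relaxation of $u(\cdot,t)$ to $\rho(\cdot;\theta_0)$'' is then an ergodic theorem for the nonlinear mean field diffusion (Proposition~5.2(ii) in the paper), not a statement about convergence of marginals; and the existence/convergence of the intermediate zero $\widehat\vartheta_M^J$ of $\mathbb G_M^J$ needs an inverse-function step conditioned on the event $\det\mathbb H_M^J(\widehat\vartheta_M^J)\neq 0$ (the paper's Lemma~5.4), since a pure degree argument does not single out a sequence of zeros. Once you correct the intermediate object to $\mathbb G_M^J$ and argue local uniqueness of its zero via $\mathbb H_M^J(\widehat\vartheta_M^J)\to\mathscr H^J(\theta_0)$, your scheme goes through and coincides with the paper's; in particular, your quantitative joint-limit bound $\mathcal O((M/N)^{1/2})$ for the propagation-of-chaos contribution is exactly Lemma~5.6.
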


Then, in \cref{thm:main_rate} we provide a rate of convergence for our estimator.

\begin{theorem} \label{thm:main_rate}
Let the assumptions of \cref{thm:main_unbiased} hold, and let us introduce the notation
\begin{equation}
\Xi_{M,N}^J \defeq \left(\frac{1}{\sqrt M} + \frac{1}{\sqrt N}\right)^{-1} \norm{\widehat \theta_{M,N}^J - \theta_0}.
\end{equation}
Then, for all $\epl > 0$ there exists $K_\epl > 0$ such that
\begin{align}
\lim_{N \to \infty} \lim_{M \to \infty} \Pr \left( \Xi_{M,N}^J > K_\epl \right) &< \epl, \label{eq:rate_1} \\
\lim_{M \to \infty} \lim_{N \to \infty} \Pr \left( \Xi_{M,N}^J > K_\epl \right) &< \epl, \label{eq:rate_2} 
\end{align}
and if $M = o(\sqrt N)$
\begin{equation}
\lim_{M,N \to \infty} \Pr \left( \Xi_{M,N}^J > K_\epl \right) < \epl. \label{eq:rate_3}
\end{equation}
\end{theorem}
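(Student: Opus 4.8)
The plan is to linearise the estimating equation $G_{M,N}^J(\widehat\theta_{M,N}^J)=\mathbf 0$ around $\theta_0$ and to convert a two–term bound on the score into a bound on $\widehat\theta_{M,N}^J-\theta_0$. By \cref{rem:continuityG} each component of $G_{M,N}^J$ is $\mathcal C^1$ in $\theta$, so a componentwise mean value theorem produces a matrix $H_{M,N}$, whose $i$-th row is $\dot{(G_{M,N}^J)}_i$ evaluated at a point $\bar\theta^{(i)}$ on the segment joining $\widehat\theta_{M,N}^J$ and $\theta_0$, such that, since $G_{M,N}^J(\widehat\theta_{M,N}^J)=\mathbf 0$,
\begin{equation*}
\widehat\theta_{M,N}^J-\theta_0=-H_{M,N}^{-1}\,G_{M,N}^J(\theta_0),\qquad\text{hence}\qquad
\Xi_{M,N}^J\le\norm{H_{M,N}^{-1}}\,\left(\tfrac{1}{\sqrt M}+\tfrac{1}{\sqrt N}\right)^{-1}\norm{G_{M,N}^J(\theta_0)},
\end{equation*}
whenever $H_{M,N}$ is invertible. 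It then suffices to show that $H_{M,N}$ is invertible with probability tending to one and $\norm{H_{M,N}^{-1}}=O_{\PP}(1)$, and that the normalised score is $O_{\PP}(1)$, in each of the three asymptotic regimes.

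For the Jacobian term I would prove that $\dot G_{M,N}^J(\theta)\to\overline{H}(\theta)\defeq\sum_{j=1}^J\E^{\mu_{\theta_0}}[h_j(X_0,X_\Delta;\theta)]$ in probability, uniformly for $\theta$ in a neighbourhood of $\theta_0$. This combines a uniform-in-$\theta$ ergodic averaging estimate for the stationary mean field process with a propagation-of-chaos estimate that replaces the $n$-th particle of \eqref{eq:SDE_N} by the corresponding mean field trajectory, both made quantitative using the polynomial bounds of \cref{ass:functions_psi} together with uniform-in-time moment bounds on the two processes (a consequence of the convexity in \cref{ass:potential}). Since $\overline{H}$ is continuous and $\overline{H}(\theta_0)$ is invertible by the nondegeneracy hypothesis \eqref{eq:technical_assumption}, consistency of $\widehat\theta_{M,N}^J$ (from \cref{thm:main_unbiased}), hence of the $\bar\theta^{(i)}$, forces $H_{M,N}\to\overline{H}(\theta_0)$, so $H_{M,N}$ is eventually invertible and $\norm{H_{M,N}^{-1}}=O_{\PP}(1)$.

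The crux is the score estimate
\begin{equation*}
\left(\tfrac{1}{\sqrt M}+\tfrac{1}{\sqrt N}\right)^{-1}\norm{G_{M,N}^J(\theta_0)}=O_{\PP}(1),
\end{equation*}
which I would obtain from the decomposition
\begin{equation*}
G_{M,N}^J(\theta_0)=\underbrace{\Big(G_{M,N}^J(\theta_0)-\overline{G}_M^J(\theta_0)\Big)}_{\text{(a)}}+\underbrace{\Big(\overline{G}_M^J(\theta_0)-\overline{G}_M^{J,\mathrm{stat}}(\theta_0)\Big)}_{\text{(b)}}+\underbrace{\overline{G}_M^{J,\mathrm{stat}}(\theta_0)}_{\text{(c)}},
\end{equation*}
where $\overline{G}_M^J$ is built from a mean field trajectory solving \eqref{eq:SDE_MFL} driven by the same Brownian motion and started from the same law $\nu$ as the observed particle, and $\overline{G}_M^{J,\mathrm{stat}}$ uses instead a stationary mean field trajectory (started from $\mu_{\theta_0}$) coupled to it. Term (a) is controlled by uniform-in-time propagation of chaos \cite{Mal01,DeT21}, $\sup_t\E\,\abs{X_t^{(n)}-\bar X_t}^2\le C/N$, together with the polynomially growing Lipschitz bounds on $g_j$ coming from \cref{ass:functions_psi}; it is $O_{\PP}(1/\sqrt N)$. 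Term (b) is controlled by the exponential convergence to equilibrium of the McKean--Vlasov dynamics under the convexity in \cref{ass:potential} (a Wasserstein contraction), giving $\E\,\abs{\bar X_{t_m}-\bar X_{t_m}^{\mathrm{stat}}}^2\le Ce^{-c\Delta m}$, so after summation and division by $M$ it is $O(1/M)$. Term (c) is the heart: along the stationary mean field trajectory the nonlinear SDE \eqref{eq:SDE_MFL} coincides in law with the linear diffusion generated by $\mathcal L$, so the eigenfunction identity $\E[\phi_j(X_\Delta;\theta_0)\mid X_0]=e^{-\lambda_j(\theta_0)\Delta}\phi_j(X_0;\theta_0)$ holds, whence $\big\{\sum_j g_j(\bar X_{t_m}^{\mathrm{stat}},\bar X_{t_{m+1}}^{\mathrm{stat}};\theta_0)\big\}_m$ are martingale increments for the filtration $\{\mathcal F_{t_m}\}$; by orthogonality of the increments and stationarity, $\E\,\norm{M\,\overline{G}_M^{J,\mathrm{stat}}(\theta_0)}^2\le CM$, so term (c) is $O_{\PP}(1/\sqrt M)$.

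Collecting the two pieces, $\Xi_{M,N}^J$ is the product of an $O_{\PP}(1)$ matrix norm and an $O_{\PP}(1)$ scalar, hence $O_{\PP}(1)$, and choosing $K_\epl$ large gives the stated probabilities. For the iterated limits \eqref{eq:rate_1}--\eqref{eq:rate_2} one sends the inner index to infinity first, which removes the corresponding $1/\sqrt M$ or $1/\sqrt N$ in the normalisation and leaves the remaining bound uniform in the outer index (in fact the inner limit of the probability is then $0$). For the joint limit \eqref{eq:rate_3} one tracks the error terms quantitatively, and the condition $M=o(\sqrt N)$ is exactly what keeps the propagation-of-chaos contribution negligible after normalisation by $(1/\sqrt M+1/\sqrt N)$ in the regime where the martingale scale $1/\sqrt M$ dominates — a finer version of the bookkeeping behind the $M=o(N)$ condition of \cref{thm:main_unbiased}. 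I expect this joint-limit accounting, together with making the transfer in term (a) from the genuine $N$-particle trajectory to the mean field one quantitative (uniformly in $\theta$ and in time), to be the main obstacle; the polynomial-growth hypotheses of \cref{ass:functions_psi} and the uniform-in-time moment and propagation-of-chaos bounds available under \cref{ass:potential} are used precisely to carry it out.
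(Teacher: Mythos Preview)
Your approach is sound and takes a genuinely different route from the paper. You linearise $G_{M,N}^J(\widehat\theta_{M,N}^J)=\mathbf 0$ directly around $\theta_0$ and bound the normalised score $G_{M,N}^J(\theta_0)$ via a three–term decomposition (propagation of chaos, relaxation to equilibrium, martingale increments), with the Jacobian controlled by consistency plus uniform convergence of $\dot G_{M,N}^J$. The paper instead decomposes the \emph{estimator} through auxiliary zeros of the limiting scores: for \eqref{eq:rate_1} it writes $\widehat\theta_{M,N}^J-\theta_0=(\widehat\theta_{M,N}^J-\vartheta_N^J)+(\vartheta_N^J-\theta_0)$, where $\vartheta_N^J$ is the deterministic zero of $\mathcal G_N^J$, and invokes a CLT for the first piece (\cref{lem:N_MtoInf}) together with the rate $\norm{\vartheta_N^J-\theta_0}\le C/\sqrt N$ (\cref{lem:NtoInf}) for the second; for \eqref{eq:rate_2}--\eqref{eq:rate_3} it uses instead the random zero $\widehat\vartheta_M^J$ of the mean-field score $\mathbb G_M^J$, combining the $L^1$ bound $\E\norm{\widehat\theta_{M,N}^J-\widehat\vartheta_M^J}\le C\sqrt{M/N}$ (\cref{lem:M_NtoInf}) with the Kessler--S{\o}rensen CLT (\cref{lem:MtoInf}). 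Your route is more self-contained and makes the origin of the two scales $1/\sqrt M$ and $1/\sqrt N$ transparent at the level of the score; the paper's route is more modular, recycling existing CLTs rather than reproving the martingale variance bound, and avoids your extra coupling step (b) to the stationary process.

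One small correction: your parenthetical ``the inner limit of the probability is then $0$'' is right for \eqref{eq:rate_1} (after $M\to\infty$ everything is deterministic, so the probability is an indicator that vanishes once $K_\epl$ exceeds the constant), but not for \eqref{eq:rate_2}: after $N\to\infty$ the martingale term (c) survives, $\Xi_{M,N}^J$ remains random, and the inner limit of the probability is $\Pr(\norm{\Lambda^J}>K_\epl)>0$, which you then make smaller than $\epl$ by choosing $K_\epl$ large. This does not affect the argument, only the aside.
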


Finally, in \cref{thm:main_normal} we show that our estimator is asymptotically normal.

\begin{theorem} \label{thm:main_normal}
Let the assumptions of \cref{thm:main_unbiased} hold with $M = o(\sqrt N)$. Then, the estimator $\widehat \theta_{M,N}^J$ is asymptotically normal, i.e.,
\begin{equation}
\lim_{M,N \to \infty} \sqrt{M} \left( \widehat \theta_{M,N}^J - \theta_0 \right) = \Lambda^J \sim \mathcal N(\mathbf 0, \Gamma_0^J), \qquad \text{in distribution},
\end{equation}
where
\begin{equation} \label{eq:variance_normal}
\begin{aligned}
\Gamma_0^J = \left( \sum_{j=1}^J \E^{\mu_{\theta_0}} \left[ h_j(X_0, X_\Delta; \theta_0) \right] \right)^{-1} & \left( \sum_{j=1}^J \sum_{k=1}^J \E^{\mu_{\theta_0}} \left[ \ell_{j,k} (X_0, X_\Delta; \theta_0) \right] \right) \\
\times & \left( \sum_{j=1}^J \E^{\mu_{\theta_0}} \left[ h_j(X_0, X_\Delta; \theta_0) \right] \right)^{-\top}.
\end{aligned}
\end{equation}
\end{theorem}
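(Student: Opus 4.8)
The plan is to treat $\widehat\theta_{M,N}^J$ as a $Z$-estimator, linearise the estimating equation around $\theta_0$, and feed the resulting expansion into a martingale central limit theorem; \cref{thm:main_unbiased} and \cref{thm:main_rate} serve as inputs. Since $M = o(\sqrt N)$ implies $M = o(N)$, \cref{thm:main_unbiased} ensures that, with probability tending to one, $\widehat\theta_{M,N}^J$ exists, lies in the interior of $\Theta$, and converges to $\theta_0$ in probability. On this event, the $\mathcal C^1$-regularity of $G_{M,N}^J$ (\cref{rem:continuityG}) and a componentwise mean value theorem give $\mathbf 0 = G_{M,N}^J(\widehat\theta_{M,N}^J) = G_{M,N}^J(\theta_0) + D_{M,N}\,(\widehat\theta_{M,N}^J - \theta_0)$ with $D_{M,N} \defeq \int_0^1 \dot G_{M,N}^J\big(\theta_0 + s(\widehat\theta_{M,N}^J - \theta_0)\big)\dd s$, hence $\sqrt M\,(\widehat\theta_{M,N}^J - \theta_0) = -D_{M,N}^{-1}\,\sqrt M\, G_{M,N}^J(\theta_0)$ as soon as $D_{M,N}$ is invertible with high probability.

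First I would prove $D_{M,N} \to D_0 \defeq \sum_{j=1}^J \E^{\mu_{\theta_0}}[h_j(X_0,X_\Delta;\theta_0)]$ in probability, which is nonsingular by \eqref{eq:technical_assumption}, so that $D_{M,N}^{-1} \to D_0^{-1}$. This combines the consistency $\widehat\theta_{M,N}^J \to \theta_0$ with a uniform-in-$\theta$ law of large numbers for $\dot G_{M,N}^J(\theta) = \frac1M\sum_{m=0}^{M-1}\sum_{j=1}^J h_j(\widetilde X_m^{(n)},\widetilde X_{m+1}^{(n)};\theta)$: ergodicity of the $N$-particle system under \cref{ass:potential} yields the limit of the time average under $\mu^N_{\theta_0}$, the polynomial bounds in \cref{ass:functions_psi} together with compactness of $\Theta$ promote this to uniform convergence, and propagation of chaos replaces the one-particle marginal of $\mu^N_{\theta_0}$ by $\mu_{\theta_0}$ — precisely the estimates already established in the proof of \cref{thm:main_unbiased}. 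It then remains to show $\sqrt M\, G_{M,N}^J(\theta_0) \Rightarrow \mathcal N(\mathbf 0, \Sigma_0)$ with $\Sigma_0 \defeq \sum_{j=1}^J\sum_{k=1}^J \E^{\mu_{\theta_0}}[\ell_{j,k}(X_0,X_\Delta;\theta_0)]$, since then Slutsky's theorem gives $\sqrt M\,(\widehat\theta_{M,N}^J - \theta_0) \Rightarrow \mathcal N(\mathbf 0, D_0^{-1}\Sigma_0 D_0^{-\top}) = \mathcal N(\mathbf 0, \Gamma_0^J)$, which is \eqref{eq:variance_normal}.

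For the central limit theorem for $\sqrt M\, G_{M,N}^J(\theta_0)$ I would proceed in two steps. Couple the observed particle $X^{(n)}$ with the mean field SDE \eqref{eq:SDE_MFL} driven by the same Brownian motion $B^{(n)}$; the uniform-in-time propagation of chaos estimates of \cite{Mal01}, available under \cref{ass:potential}, control $\sup_m \E\,|\widetilde X_m^{(n)} - \bar X_{t_m}|^2$ by $C/N$ and, combined with the polynomial bounds of \cref{ass:functions_psi} and moment bounds under the invariant measure, give $\sqrt M\, G_{M,N}^J(\theta_0) = \frac1{\sqrt M}\sum_{m=0}^{M-1}\Phi(\bar X_{t_m},\bar X_{t_{m+1}}) + R_{M,N}$, where $\Phi \defeq \sum_{j=1}^J g_j(\cdot,\cdot;\theta_0)$ and $R_{M,N} \to \mathbf 0$ in probability under $M = o(\sqrt N)$; the difference between the law of $\bar X$ started from $\nu$ and the stationary measure $\mu_{\theta_0}$ contributes only exponentially small corrections by geometric ergodicity of \eqref{eq:SDE_MFL} and is absorbed into $R_{M,N}$. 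For the main term, the construction \eqref{eq:def_g} is designed so that $\Phi(\bar X_{t_m},\bar X_{t_{m+1}})$ is a martingale increment: when $\bar X$ is stationary the drift of \eqref{eq:SDE_MFL} is exactly $-V'(\cdot;\alpha) - W'(\cdot;\kappa)*\rho(\cdot;\theta_0)$, so $\bar X$ behaves as a linear diffusion with generator $\mathcal L$, and since $\phi_j$ is an eigenfunction of $\mathcal L$ with eigenvalue $-\lambda_j(\theta_0)$ one has $\E[\phi_j(\bar X_{t_{m+1}})\mid\bar X_{t_m}] = e^{-\lambda_j(\theta_0)\Delta}\phi_j(\bar X_{t_m})$, whence $\E[\Phi(\bar X_{t_m},\bar X_{t_{m+1}})\mid\mathcal F_{t_m}] = \mathbf 0$. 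Thus $\{\Phi(\bar X_{t_m},\bar X_{t_{m+1}})\}_{m\ge 0}$ is a stationary, ergodic, square-integrable martingale-difference sequence — square integrability following from \cref{ass:functions_psi} and the Gaussian-type tails of $\mu_{\theta_0}$ — and the martingale CLT applies with limiting covariance $\E^{\mu_{\theta_0}}[\Phi\Phi^\top]$; expanding this product and using the eigenfunction identity once more to simplify the cross terms $\E[\phi_j(X_\Delta)\phi_k(X_\Delta)\mid X_0]$ identifies it with $\Sigma_0$.

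The main obstacle is the coupling step: one must show that the discrepancy between the single observed trajectory of the $N$-particle system and the mean field limit stays negligible even after the diffusive rescaling by $\sqrt M$, which is exactly what forces the relation $M = o(\sqrt N)$ between the number of observations and the number of particles and requires carefully matching the quantitative chaos bounds against the polynomial growth of $\psi_j,\phi_j$ and their $\theta$-derivatives. The remaining ingredients — verifying the conditional-variance and Lindeberg conditions in the martingale CLT, the uniform-in-$\theta$ control of $\dot G_{M,N}^J$, and the invertibility of $D_{M,N}$ with high probability via continuity of the determinant — are routine given the ergodicity and moment estimates already used in the proofs of \cref{thm:main_unbiased,thm:main_rate}.
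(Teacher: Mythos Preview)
Your argument is correct but takes a different route from the paper. You linearise the estimating function $G_{M,N}^J$ directly around $\theta_0$ and then handle the two pieces $D_{M,N}^{-1}$ and $\sqrt{M}\,G_{M,N}^J(\theta_0)$ separately, coupling the latter to its mean-field analogue $\sqrt{M}\,\mathbb G_M^J(\theta_0)$ and invoking a martingale CLT for the resulting stationary increments. The paper instead decomposes at the level of the \emph{estimators}: it introduces the auxiliary root $\widehat\vartheta_M^J$ of $\mathbb G_M^J(\theta)=0$ (built from the coupled mean-field trajectory), writes $\sqrt{M}(\widehat\theta_{M,N}^J-\theta_0)=\sqrt{M}(\widehat\theta_{M,N}^J-\widehat\vartheta_M^J)+\sqrt{M}(\widehat\vartheta_M^J-\theta_0)$, and then cites the Kessler--S\o rensen CLT as a black box for the second term while using an implicit-function-theorem argument (\cref{lem:M_NtoInf}) to bound the first by $C M/\sqrt{N}$. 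The paper's proof is shorter because the martingale-CLT machinery is entirely outsourced to \cite{KeS99,BiS95}, whereas you reprove it; conversely, your decomposition is potentially sharper: since the chaos bound on $G_{M,N}^J(\theta_0)-\mathbb G_M^J(\theta_0)$ is $O(N^{-1/2})$ \emph{uniformly in $M$} (\cref{pro:limitsG}(i)$'$), your remainder $R_{M,N}$ is $O(\sqrt{M/N})$ and would vanish already under $M=o(N)$, while the paper's estimator-level coupling picks up an extra $\sqrt{M}$ from the $(M{+}1)$-dimensional implicit-function argument and is what forces the stronger hypothesis $M=o(\sqrt{N})$.
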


\begin{remark} \label{rem:technical_assumption}
We note that the technical assumption \eqref{eq:technical_assumption} is not a serious limitation of the validity of the theorem; in fact, it is a nondegeneracy hypothesis which holds true in all nonpathological cases and is equivalent to \cite[Condition 4.2(a)]{KeS99} and \cite[Assumption 3.1]{APZ21}. Moreover, it is not necessary to assume that the matrix $\Gamma_0^J$ in \cref{thm:main_normal} is indeed a covariance matrix because, due to the particular form of the estimating function, this follows directly from the central limit theorem as explained in \cite{KeS99}.
\end{remark}

\begin{remark} \label{rem:order_of_limits}
For the proof of the main results, we need to assume that, roughly speaking, the number of particles goes to infinity faster than the number of observations. It is not clear whether this assumption is strictly necessary. We expect that noncommutativity issues between the different distinguished limits may arise in the case where the mean field dynamics exhibits phase transitions, i.e., when the stationary state is not unique, see \cite{DGP21}. We will study the consequences of this noncommutativity due to phase transitions to the performance of our estimator and, more generally, to the inference problem in future work. 
\end{remark}

\section{Numerical experiments} \label{sec:numerical_experiments}

In this section we present a series of numerical experiments to validate our theoretical results and demonstrate the effectiveness of our estimator in estimate unknown drift parameters of interacting particle systems. In order to generate synthetic data we employ the Euler--Maruyama method with a time step $h = 0.01$ to solve numerically system \eqref{eq:SDE_N} and obtain $(X_t^{(n)})_{t \in [0,T]}$ for all $n = 1, \dots, N$. Notice that in order to preserve the exchangeability property of the system it is important to set the same initial condition for all the particles, hence we take $X_0^{(n)} = 0$ for all $n = 1, \dots, N$. We then randomly choose a value $n^* \in \{ 1, \dots, N \}$ and we assume to know a sample $\{ X_m^{(n^*)} \}_{m=0}^M$ of observations obtained from the $n^*$-th particle with sampling rate $\Delta$. We remark that the parameters $h$ and $\Delta$ are not related to each other, in fact the former is only used to generate the data, while the latter is the actual distance between two consecutive observations. We repeat the same procedure for $L = 5$ different realizations of the Brownian motions and then we compute the average of the values obtained employing our estimator $\widehat \theta_{M,N}^J$. In the following, we first perform a sensitivity analysis with respect to the number of observations $M$, particles $N$ and eigenvalues and eigenfunctions employed in the estimation $J$, then we confirm our theoretical results given in \cref{thm:main_unbiased,thm:main_rate,thm:main_normal} and finally we test our technique with more challenging academic examples which do not exactly fit into the theory.

\subsection{Sensitivity analysis and rate of convergence} \label{sec:num_sensitivity}

\begin{figure}
\centering
\includegraphics[]{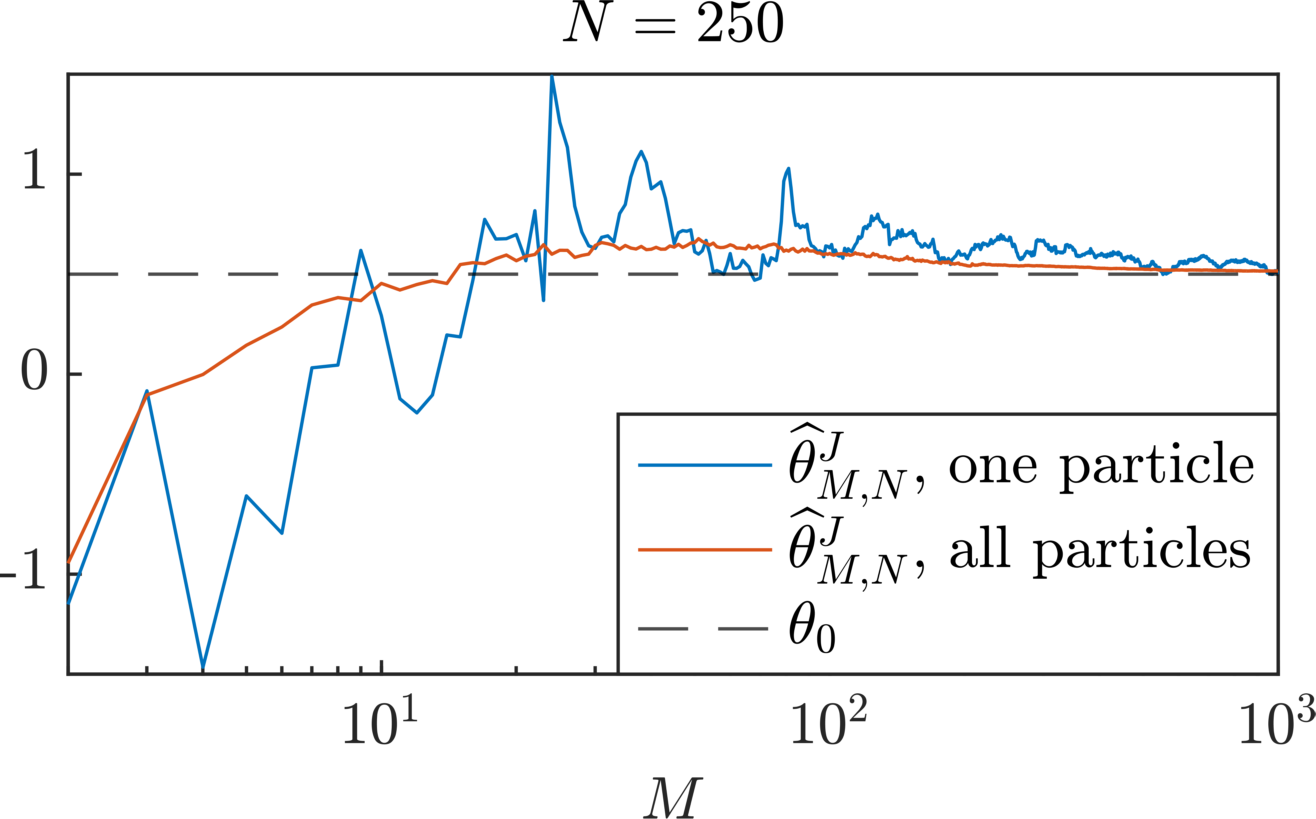} $\quad$
\includegraphics[]{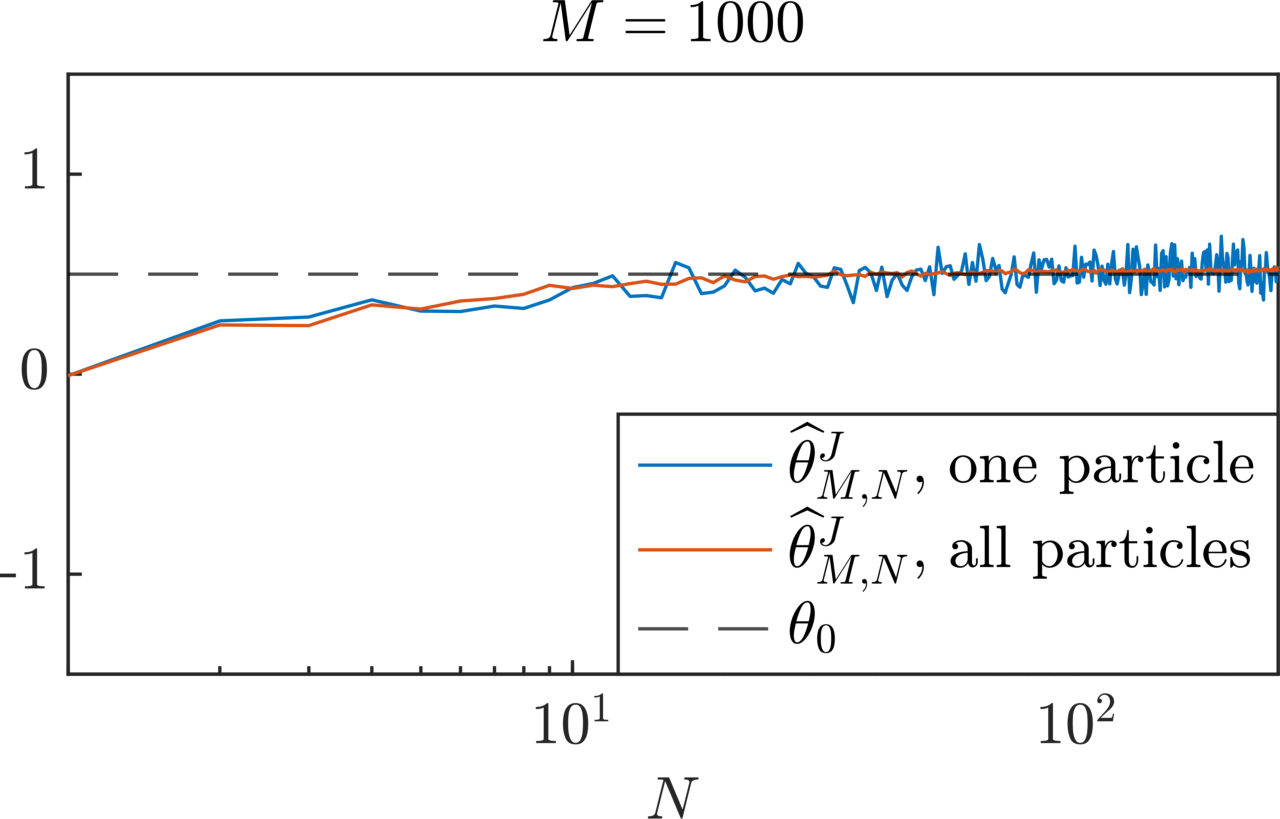} 
\caption{Sensitivity analysis for the Ornstein--Uhlenbeck potential with respect to the number $M$ of observations and $N$ of particles, for the estimator $\widehat \theta^J_{M,N}$ with $J=1$.}
\label{fig:sensitivity_OU_MN}
\end{figure}

\begin{figure}
\centering
\includegraphics[]{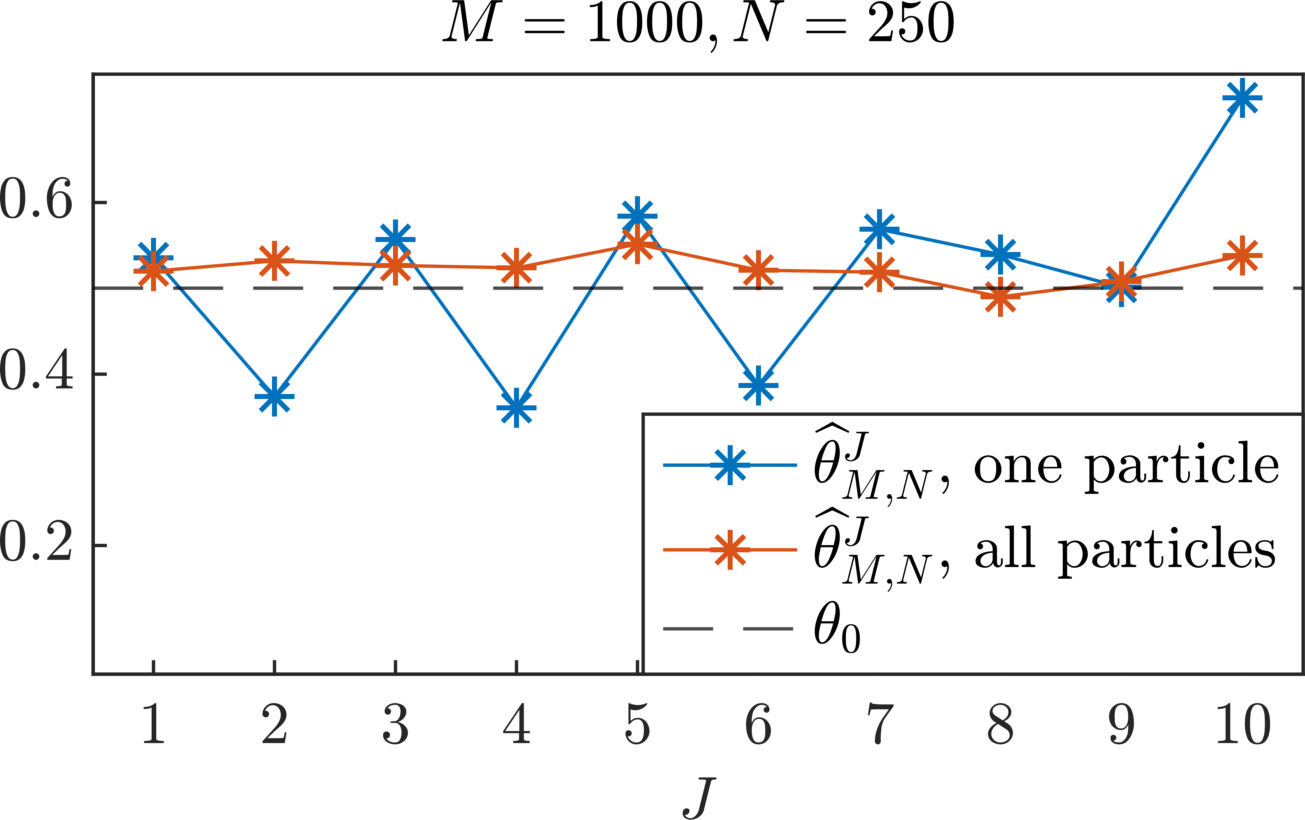}
\caption{Sensitivity analysis for the Ornstein--Uhlenbeck potential with respect to the number $J$ of eigenvalues and eigenfunctions, for the estimator $\widehat \theta^J_{M,N}$.}
\label{fig:sensitivity_OU_J}
\end{figure}

\begin{figure}
\centering
\includegraphics[]{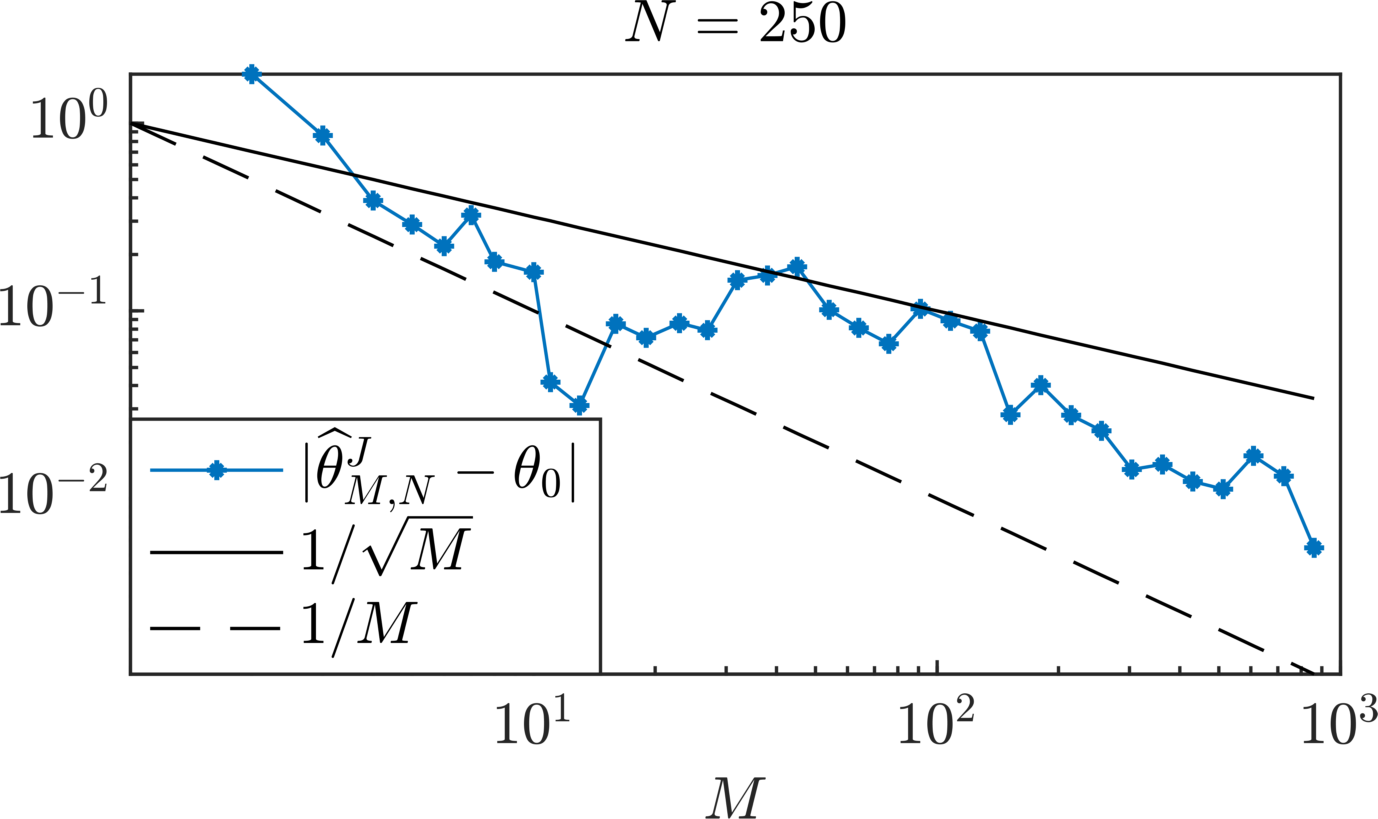} \hspace{0.2cm}
\includegraphics[]{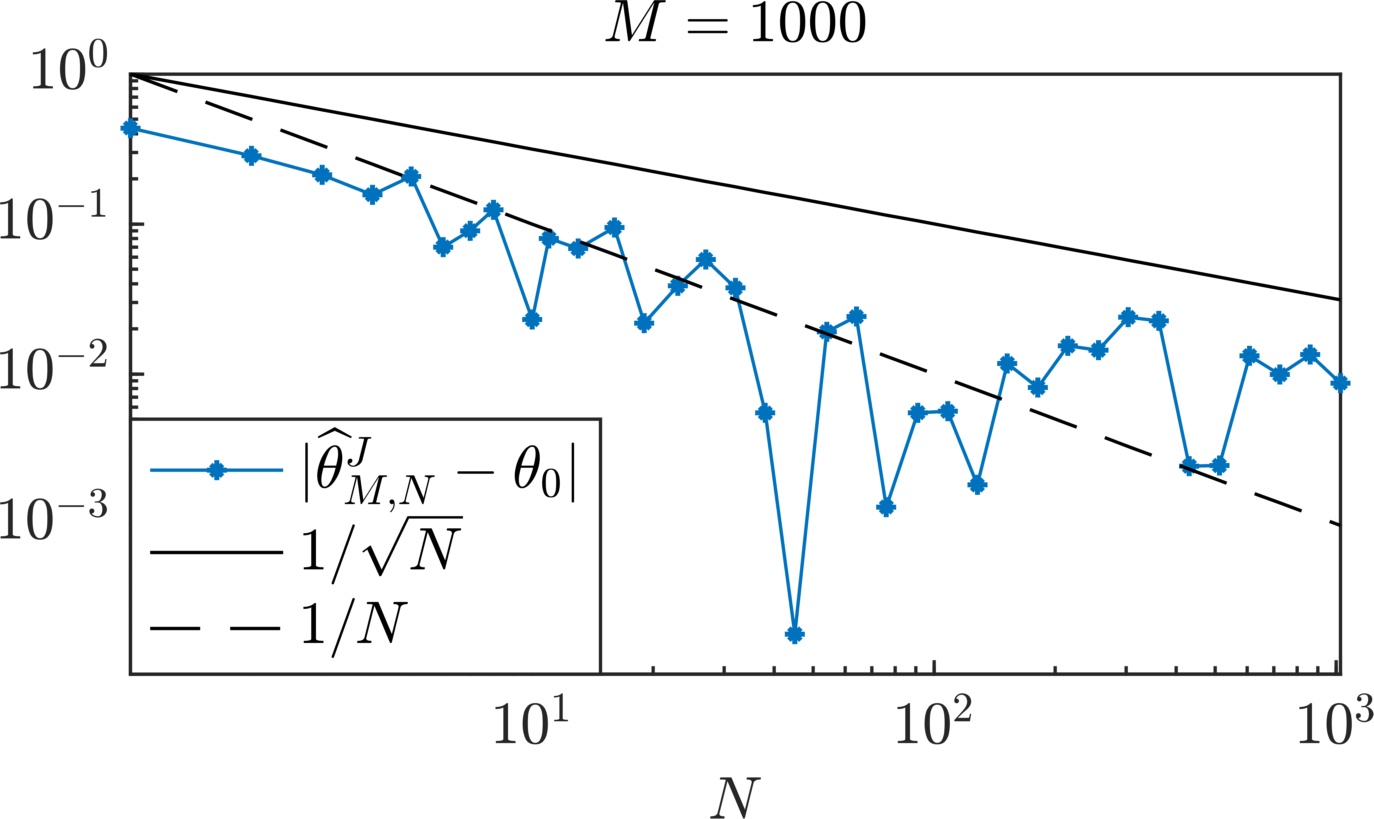}
\caption{Rates of convergence for the Ornstein--Uhlenbeck potential with respect to the number $M$ of observations and $N$ of particles, for the estimator $\widehat \theta^J_{M,N}$ with $J=1$.}
\label{fig:rate_OU}
\end{figure}

We consider the setting of \cref{exa:CW_eigen} choosing $\sigma = 1$, i.e., the interacting particles system reads 
\begin{equation} \label{eq:num_OU}
\dd X_t^{(n)} = - X_t^{(n)} \dd t - \kappa \left( X_t^{(n)} - \bar X_t^N \right) \dd t + \sqrt{2} \dd B_t^{(n)}, \qquad n = 1, \dots, N,
\end{equation}
and we aim to estimate the interaction parameter $\kappa$, so we write $\theta = \kappa$. We set $\kappa = 0.5$ and the number of eigenvalues and eigenfunctions $J = 1$ with $\psi_1(x;\theta) = x$, so that we can employ the analytical expression of our estimator given in \eqref{eq:estimator_OU}. In \cref{fig:sensitivity_OU_MN} we perform a sensitivity analysis for the estimator $\widehat \theta_{M,N}^1$ fixing $\Delta = 1$, varying the number $M$ of observations and $N$ of particles and choosing as other parameter respectively $N = 250$ and $M = 1000$, for which convergence has been reached. The blue line is the estimation given by one single particle while the red line is obtained by averaging the estimations computed employing all the different particles. We notice that convergence is reached when both $N$ and $M$ are large enough and, as expected, the estimation computed by averaging over all the particles stabilizes faster. Moreover, in \cref{fig:sensitivity_OU_J} we fix $M = 1000$ and $N = 250$ and we compare the results for different numbers $J$ of eigenvalues and eigenfunctions employed in the construction of the estimating function. We observe that increasing the value of $J$ does not significantly improves the results, hence it seems preferable to always choose $J = 1$ in order to reduce the computational cost. Finally, in \cref{fig:rate_OU} we verify that the rates of convergence of the estimator $\widehat \theta_{M,N}^1$ towards the exact value $\theta_0$ with respect to the number of observations $M$ and particles $N$ are consistent with the theoretical results given in \cref{thm:main_rate}. In particular, we observe that approximately it holds
\begin{equation}
\abs{\widehat \theta_{M,N}^1 - \theta_0} \simeq \mathcal O \left( \frac{1}{\sqrt M} + \frac{1}{\sqrt N} \right).
\end{equation}

\subsection{Comparison with the maximum likelihood estimator}

\begin{figure}
\centering
\includegraphics[]{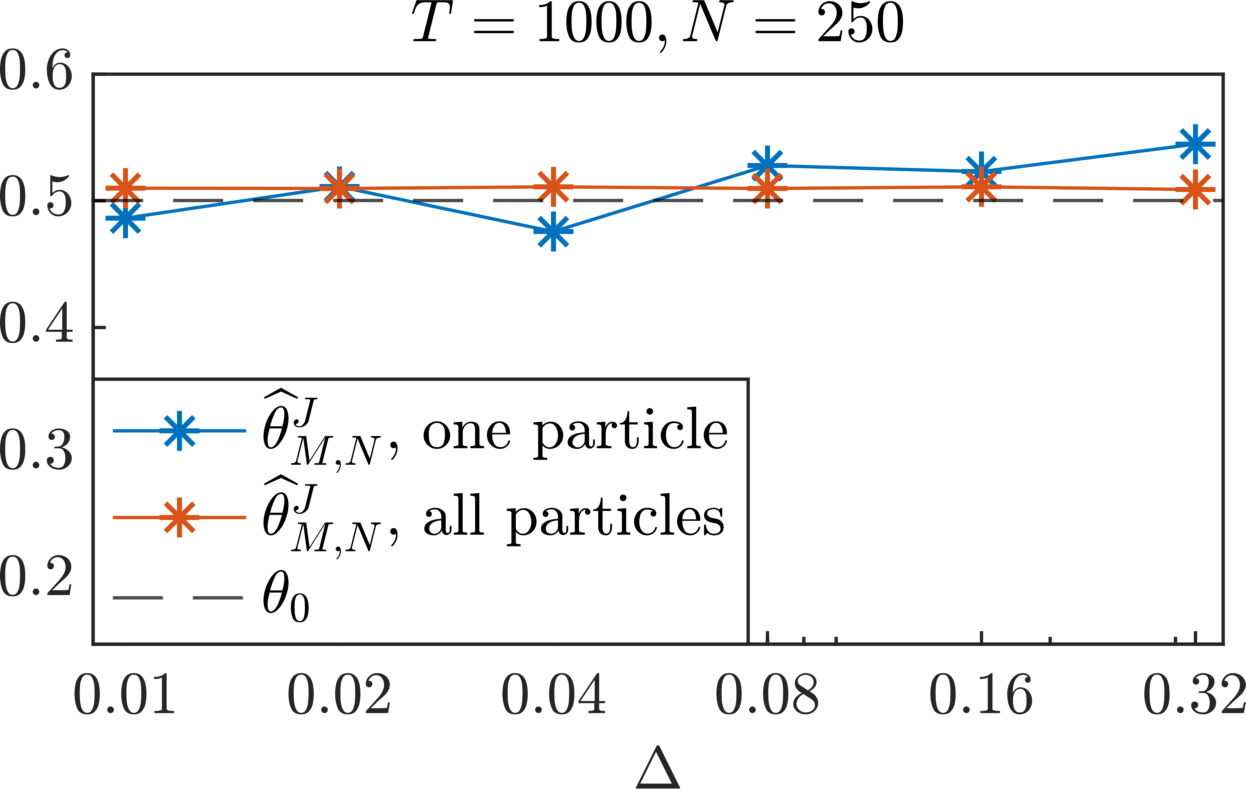} $\quad$ 
\includegraphics[]{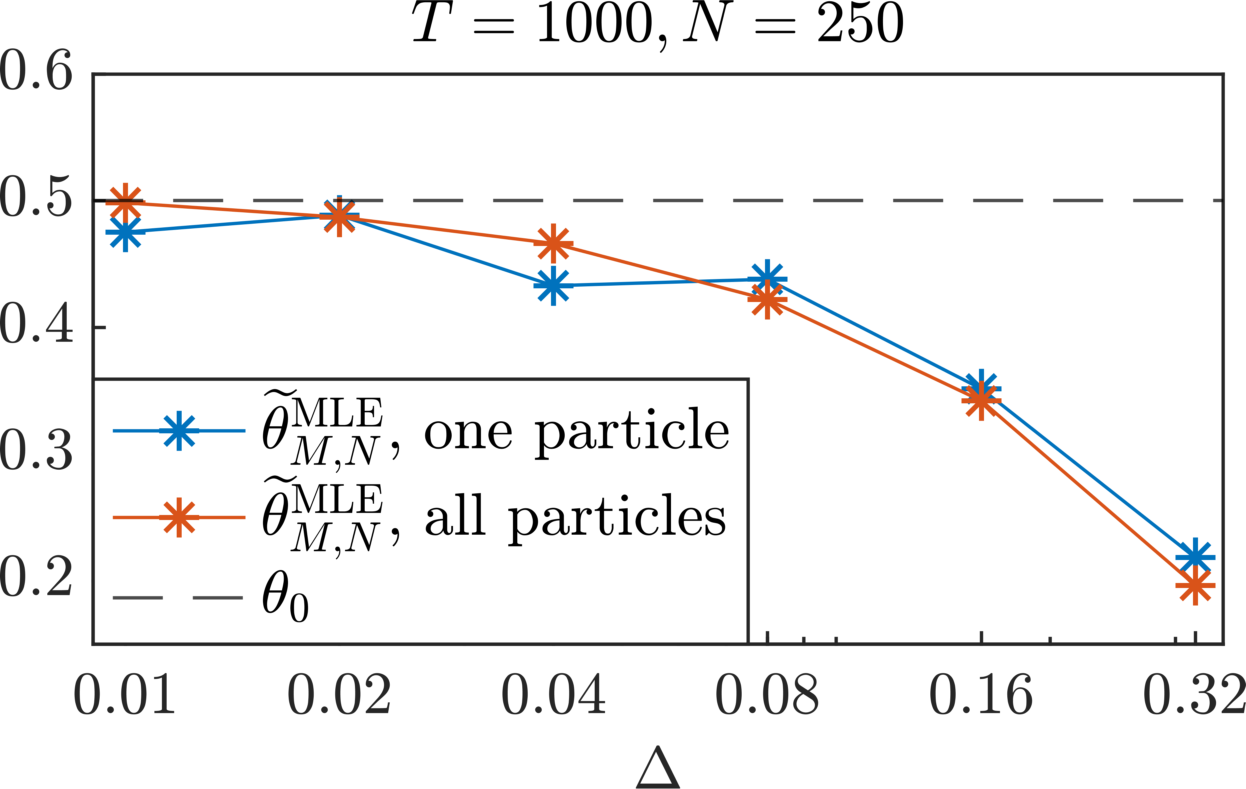}
\caption{Comparison between the estimator $\widehat \theta^J_{M,N}$ with $J=1$ (left) and the maximum likelihood estimator $\widetilde \theta_{M,N}^{\mathrm{MLE}}$ (right) varying the distance $\Delta$ between two consecutive observations for the Ornstein--Uhlenbeck potential.}
\label{fig:comparison_OU_MLE}
\end{figure}

We keep the same setting of \cref{sec:num_sensitivity} and we compare the results of our estimator with a maximum likelihood estimator. In particular, in \cite{Kas90} the MLE for the interacting particles system with continuous observations is rigorously derived. Since for large values of $N$ all the particles are approximately independent and identically distributed and  we are assuming to observe only one particle, we replace the sample mean with the expectation with respect to the invariant measure, i.e., $\bar X_t^N = 0$, and we ignore the sum over all the particles. We then discretize the integrals in the formulation obtaining a modified MLE
\begin{equation} \label{eq:MLE_OU}
\widetilde \theta_{M,N}^{\mathrm{MLE}} = - 1 - \frac{\sum_{m=0}^{M-1} \widetilde X_m^{(n)} (\widetilde X_{m+1}^{(n)} - \widetilde X_m^{(n)})}{\Delta \sum_{m=0}^{M-1} (\widetilde X_m^{(n)})^2}.
\end{equation}
In \cref{fig:comparison_OU_MLE} we fix the final time $T = 1000$ and we repeat the estimation for different values of $\Delta = 0.01 \cdot 2^i$ with $i = 0, \ldots, 5$. We observe that, differently from our estimator, the MLE is unbiased only for small values of the sampling rate $\Delta$, i.e., when the discrete observations approximate well the continuous trajectory. Notice also that, as highlighted by the numerical experiments, our estimator $\widehat \theta_{M,N}^1$ and the MLE $\widetilde \theta_{M,N}^{\mathrm{MLE}}$ defined respectively in \eqref{eq:estimator_OU} and \eqref{eq:MLE_OU} coincide in the limit of vanishing $\Delta$. In fact, we can rewrite equation \eqref{eq:estimator_OU} as
\begin{equation}
\widehat \theta_{M,N}^1 = - 1 - \frac1\Delta \log \left( 1 + \frac{\sum_{m=0}^{M-1} \widetilde X_m^{(n)} (\widetilde X_{m+1}^{(n)} - \widetilde X_m^{(n)})}{\sum_{m=0}^{M-1} (\widetilde X_m^{(n)})^2} \right),
\end{equation}
observe that the fraction in the argument of the logarithm is $\mathcal O (\Delta)$ and employ the asymptotic expansion $\log(1 + x) \sim x$ for $x = o(1)$.

\subsection{Diffusion coefficient} \label{sec:num_diffusion}

\begin{figure}
\centering
\includegraphics[]{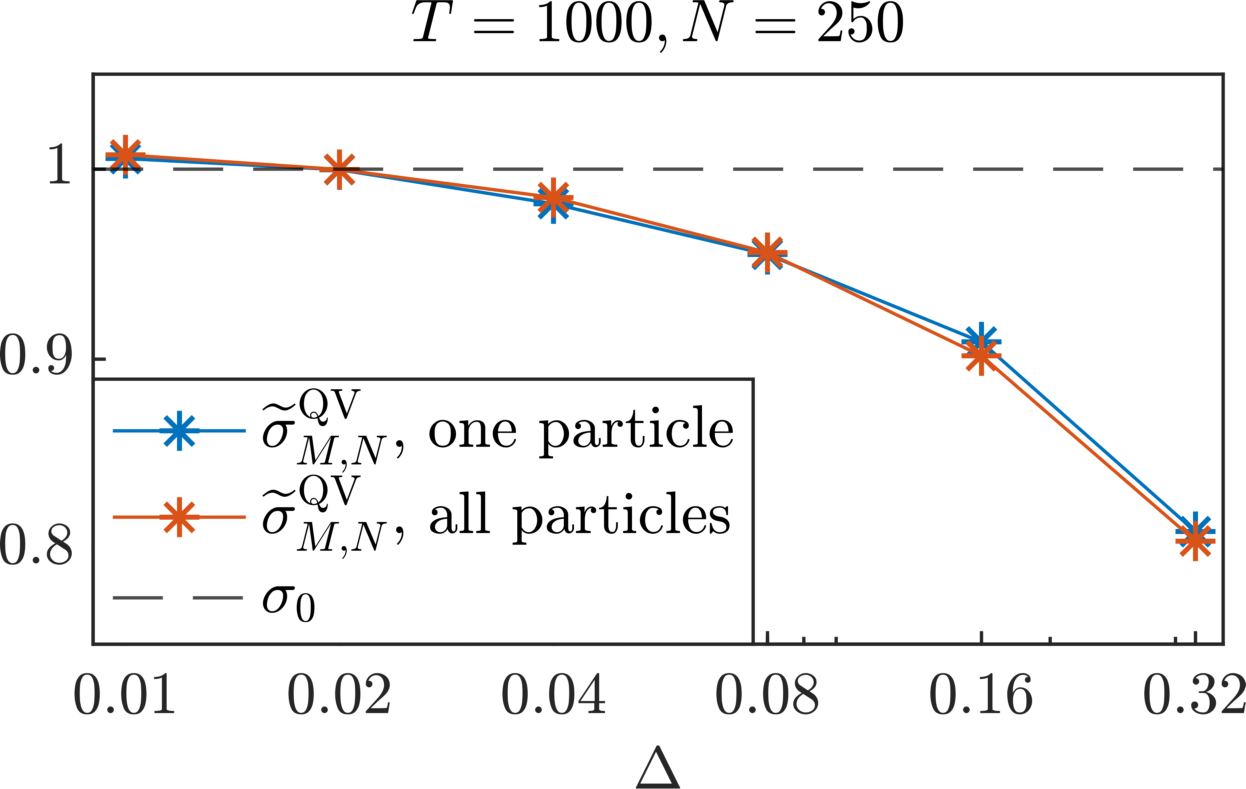}
\caption{Inference of the diffusion coefficient based on the quadratic variation varying the distance $\Delta$ between two consecutive observations for the Ornstein--Uhlenbeck potential.}
\label{fig:diffusion_QV}
\end{figure}

\begin{figure}
\centering
\includegraphics[]{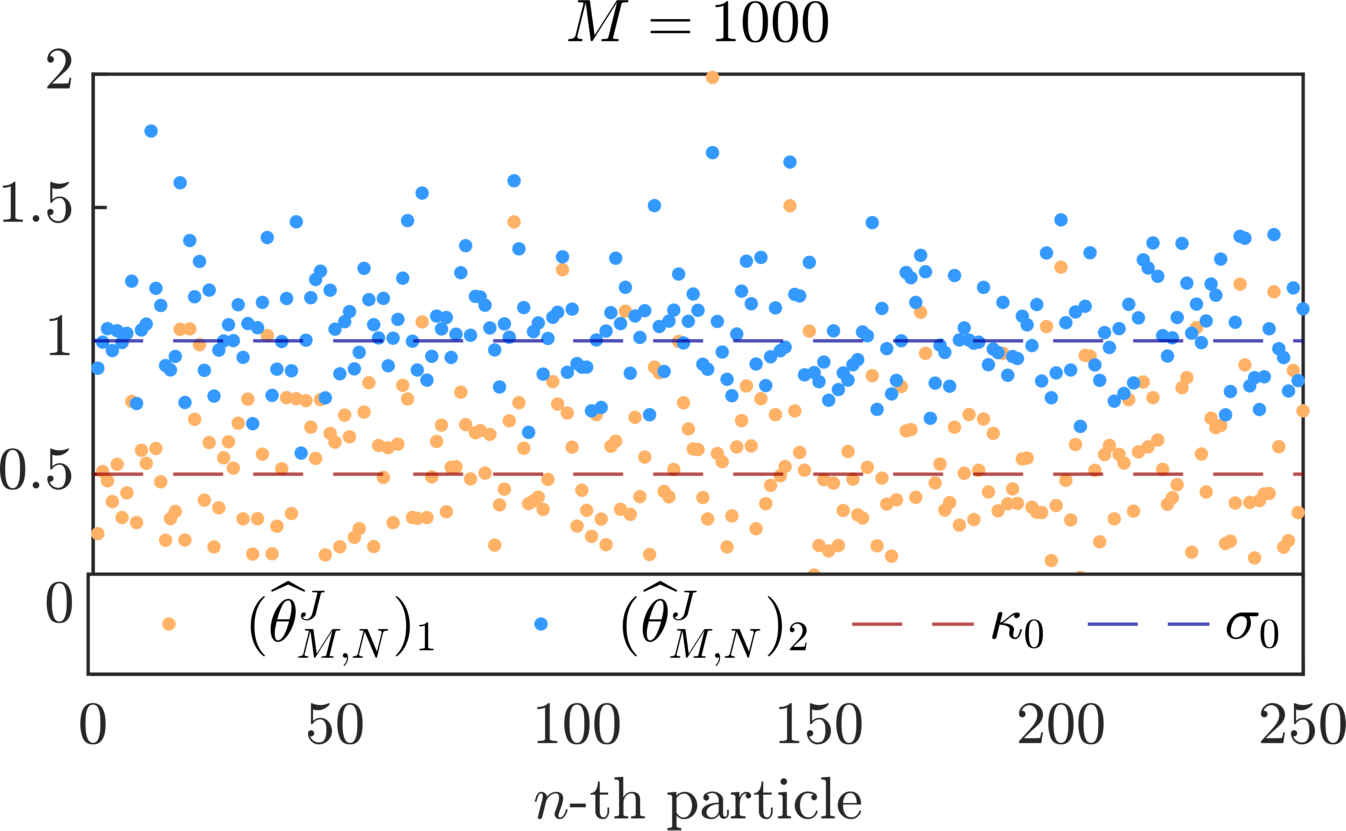} $\quad$ 
\includegraphics[]{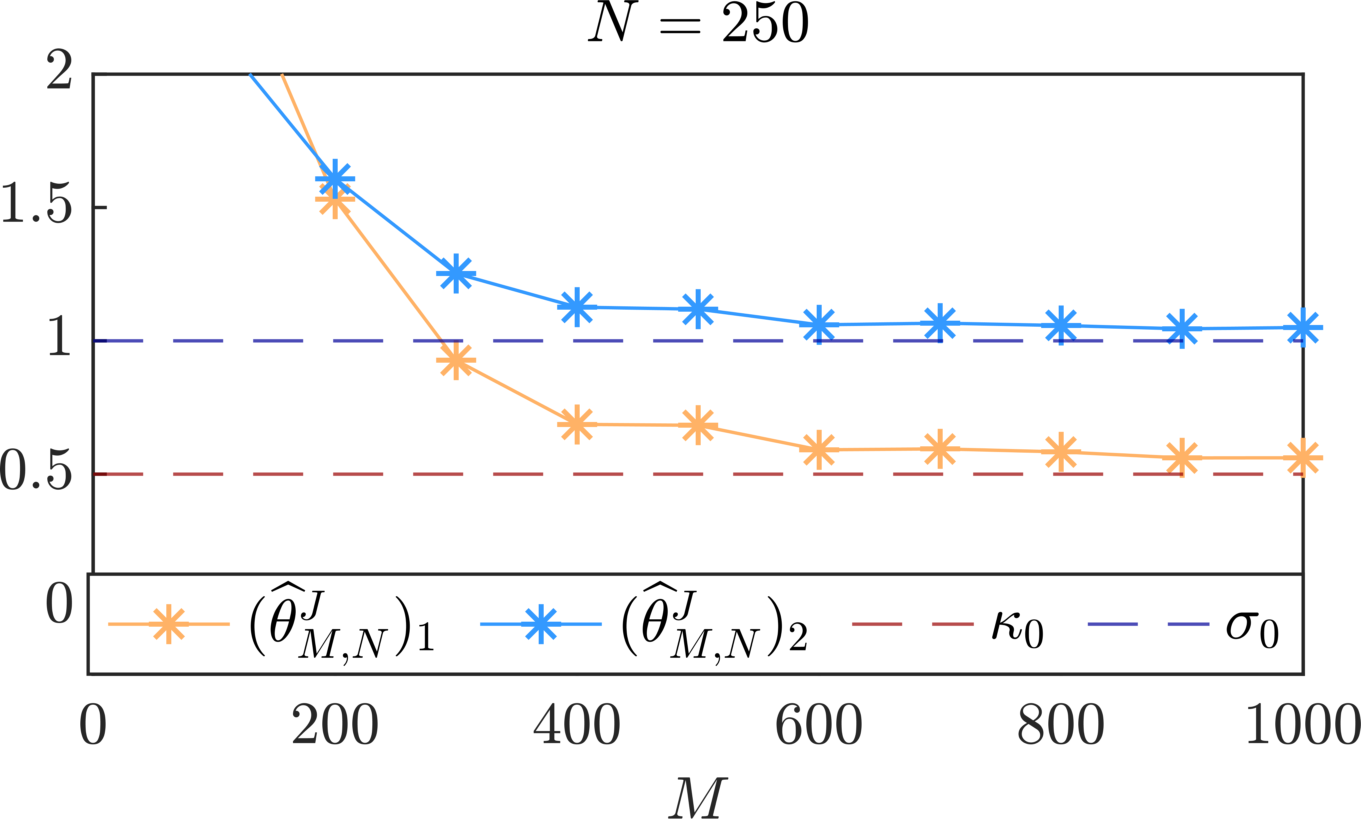}
\caption{Simultaneous inference of the interaction and diffusion coefficients for the Ornstein--Uhlenbeck potential. Left: estimation $\widehat \theta_{M,N}^J$ obtained from each particle with $J = 2$. Right: average of the estimations varying the number of observations.}
\label{fig:diffusion}
\end{figure}

We still consider the setting of \cref{exa:CW_eigen}, but, differently from \cref{sec:num_sensitivity}, we now assume the diffusion coefficient to be unknown and we aim to retrieve the correct values of the interaction parameter and the diffusion coefficient, which are given by $\kappa = 0.5$ and $\sigma = 1$, respectively. We set the number of particles $N = 250$ and the number of observations $M = 1000$. A first approach consists in first estimating the diffusion coefficient alone employing the quadratic variation and then infer the interaction parameter as in the previous numerical experiments. In particular, the diffusion coefficient can be approximated as
\begin{equation}
\widetilde \sigma^{\mathrm{QV}}_{M,N} = \frac{1}{2\Delta M} \sum_{m=0}^{M-1} (\widetilde X_{m+1}^{(n)} - \widetilde X_m^{(n)})^2.
\end{equation}
However, this estimator is asymptotically unbiased only in the limit of $\Delta$ vanishing and is therefore reliable only if the sampling rate is sufficiently small. In fact, one can prove that 
\begin{equation}
\lim_{N \to \infty} \lim_{M \to \infty} \widetilde \sigma^{\mathrm{QV}}_{M,N} = \lim_{N \to \infty} \frac{1}{2\Delta} \E \left[ (X^{(n)}_\Delta - X^{(n)}_0)^2 \right] = \frac{1}{2\Delta} \E \left[ (X_\Delta - X_0)^2 \right],
\end{equation}
which, due to the fact that in the framework of \cref{exa:CW_eigen} $X_t$ at stationarity is a Gaussian process with zero mean and covariance function 
\begin{equation}
\mathcal C(t,s) = \frac{\sigma}{1 + \kappa} e^{-(1 + \kappa)|t - s|},
\end{equation}
implies
\begin{equation}
\lim_{N \to \infty} \lim_{M \to \infty} \widetilde \sigma^{\mathrm{QV}}_{M,N} = \sigma \frac{1 - e^{-(1+\kappa)\Delta}}{(1+\kappa)\Delta},
\end{equation}
where the right-hand side converges to $\sigma$ if $\Delta$ goes to zero. This is also shown in \cref{fig:diffusion_QV} where we estimate the diffusion coefficient for different values of the sampling rate $\Delta = 0.01 \cdot 2^i$ with $i = 0, \ldots, 5$. Hence, if $\Delta$ is far from its vanishing limit we have to follow a different procedure. We now fix $\Delta = 1$ and aim to simultaneously infer the diffusion coefficient and the interaction parameter using our eigenfunction martingale estimators. We then write $\theta = \begin{pmatrix} \kappa & \sigma \end{pmatrix}^\top$ and in order to construct the estimating functions we employ $J = 2$ eigenvalues and eigenfunctions with functions $\psi_1(x;\theta) = \psi_2(x;\theta) = \begin{pmatrix} x^2 & x \end{pmatrix}^\top$. We remark that in the particular case of the Ornstein--Uhlenbeck process it is possible to express the eigenvalues and eigenfunctions analytically and the first two are given by 
\begin{equation} \label{eigen_OU_12}
\begin{aligned}
\lambda_1 &= 1 + \kappa, \hspace{2cm} \phi_1(x;\theta) = x, \\
\lambda_2 &= 2(1 + \kappa), \hspace{1.55cm} \phi_2(x;\theta) = x^2 - \frac{\sigma}{1 + \kappa}.
\end{aligned}
\end{equation}
Note that the first eigenvalue and eigenfunction do not depend on the diffusion coefficient $\sigma$ and therefore they alone do not provide enough information, hence it is important to choose at least $J = 2$. In \cref{fig:diffusion} we show the numerical results. On the left and we plot the estimation computed employing one single particle for all the $N$ particles and we observe that the estimators are concentrated around the exact values. On the other hand, on the right, we average all the estimations previously computed and we plot the results varying the number of observations $M$. We notice that the estimations stabilize fast near the correct coefficients.

\subsection{Central limit theorem}

We keep the same setting of \cref{sec:num_sensitivity} and we validate numerically the central limit theorem which we proved theoretically in \cref{thm:main_normal}. In this particular case, the asymptotic variance $\Gamma_0^J$ can be computed analytically. In fact, the mean field limit of \eqref{eq:num_OU} at stationarity is 
\begin{equation}
\dd X_t = - (1+\kappa) X_t \dd t + \sqrt{2} \dd B_t,
\end{equation}
and its solution $(X_t)_{t \in [0,T]}$ is a Gaussian process, i.e., $X \sim \mathcal{GP}(m(t), \mathcal C(t,s))$, where $m(t) = 0$ and
\begin{equation}
\mathcal C(t,s) = \frac{1}{1 + \kappa} e^{-(1+\kappa)|t-s|}.
\end{equation}
Moreover, we have
\begin{equation}
h_1(x,y;\theta) = \Delta e^{-(1+\kappa)\Delta} x^2 \qquad \text{and} \qquad \ell_{1,1}(x,y;\theta) = x^2 \left( y^2 - e^{-2(1+\kappa)\Delta} x^2 \right),
\end{equation}
and therefore we obtain 
\begin{equation}
\Gamma_0^J = \frac{e^{2(1+\kappa)\Delta} - 1}{\Delta^2}.
\end{equation}
We then fix the number of particles $N = 1500$, the number of observations $M = 1000$ and the sampling rate $\Delta = 1$. In \cref{fig:CLT_OU} we plot the quantity $\sqrt M (\widehat \theta_{M,N}^J - \theta_0)$ for any particle $n = 1, \dots, N$ and for $L = 500$ realizations of the Brownian motion and we observe that it is approximately distributed as $\mathcal N(0, \Gamma_0^J)$ accordingly to the theoretical result.

\begin{figure}
\centering
\includegraphics[]{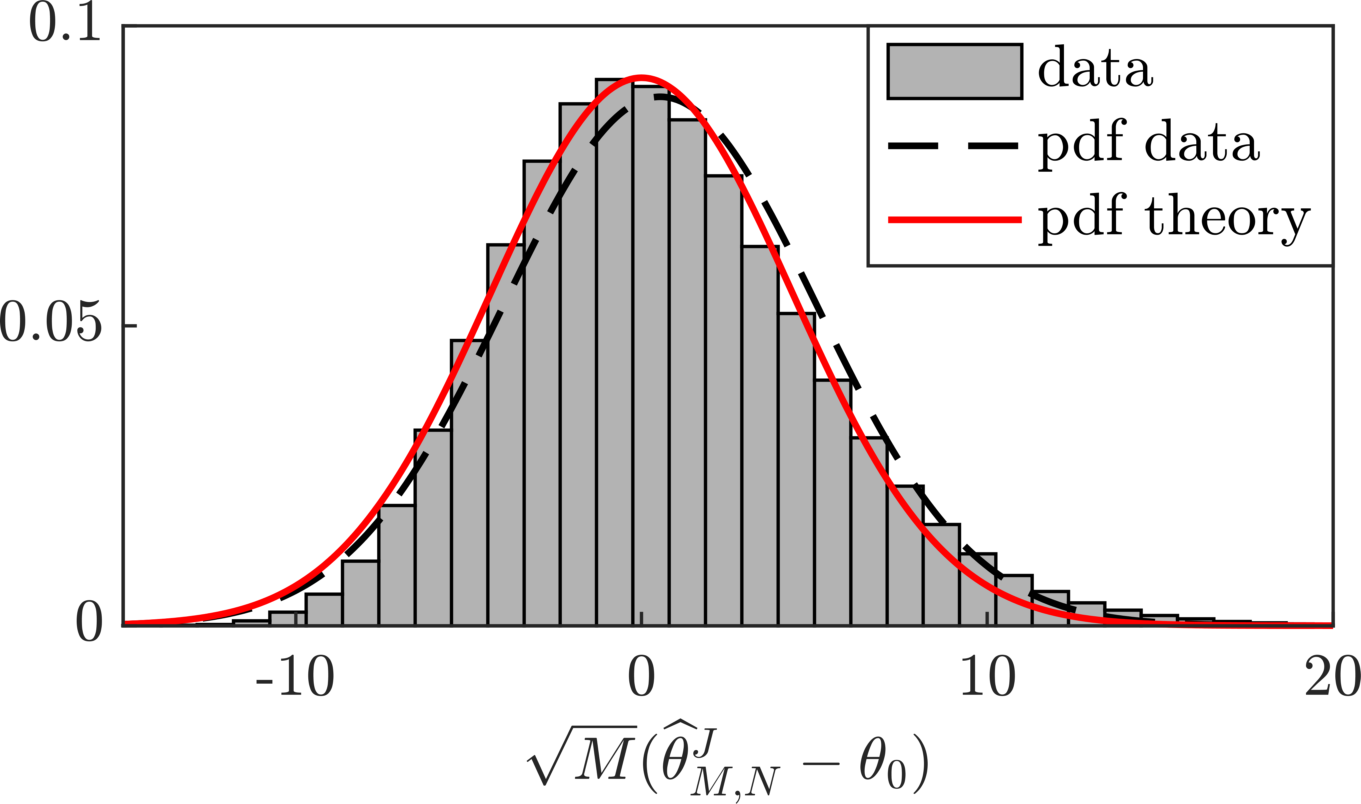}
\caption{Central limit theorems for the Ornstein--Uhlenbeck potential, for the estimator $\widehat \theta^J_{M,N}$ with $J=1$.}
\label{fig:CLT_OU}
\end{figure}

\subsection{Double well potential} \label{sec:bistable}

\begin{figure}
\centering
\includegraphics[]{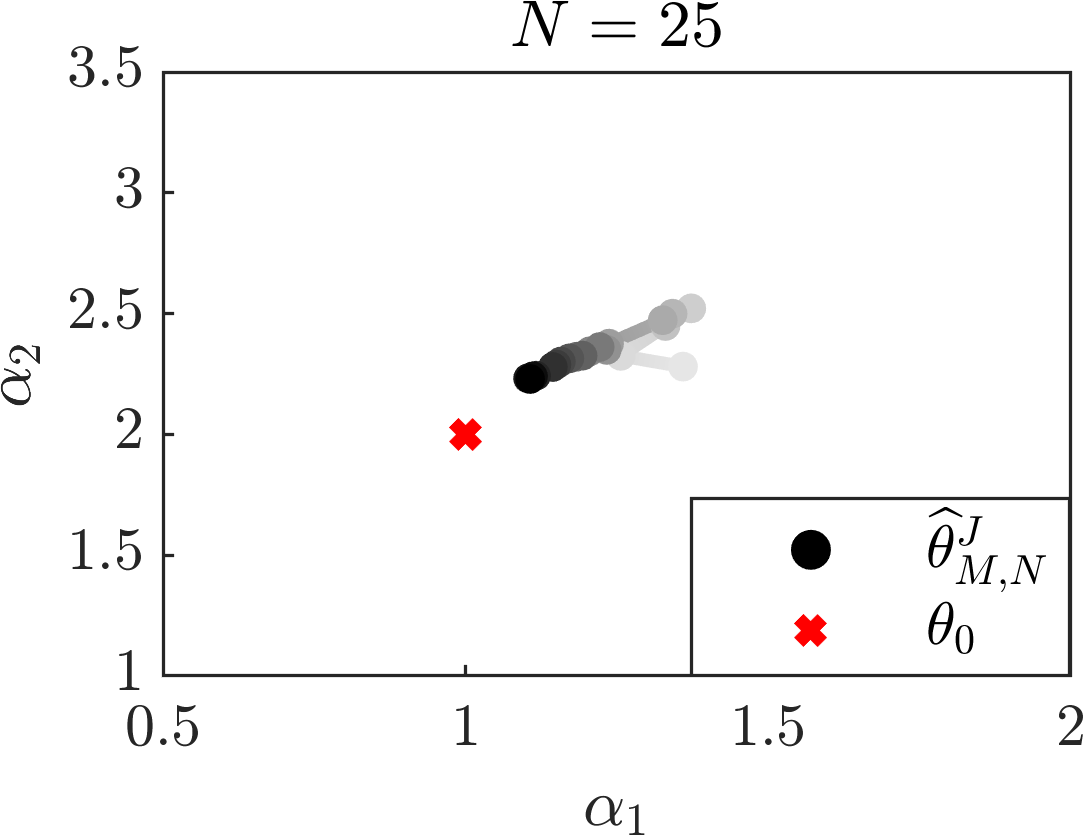}
\includegraphics[]{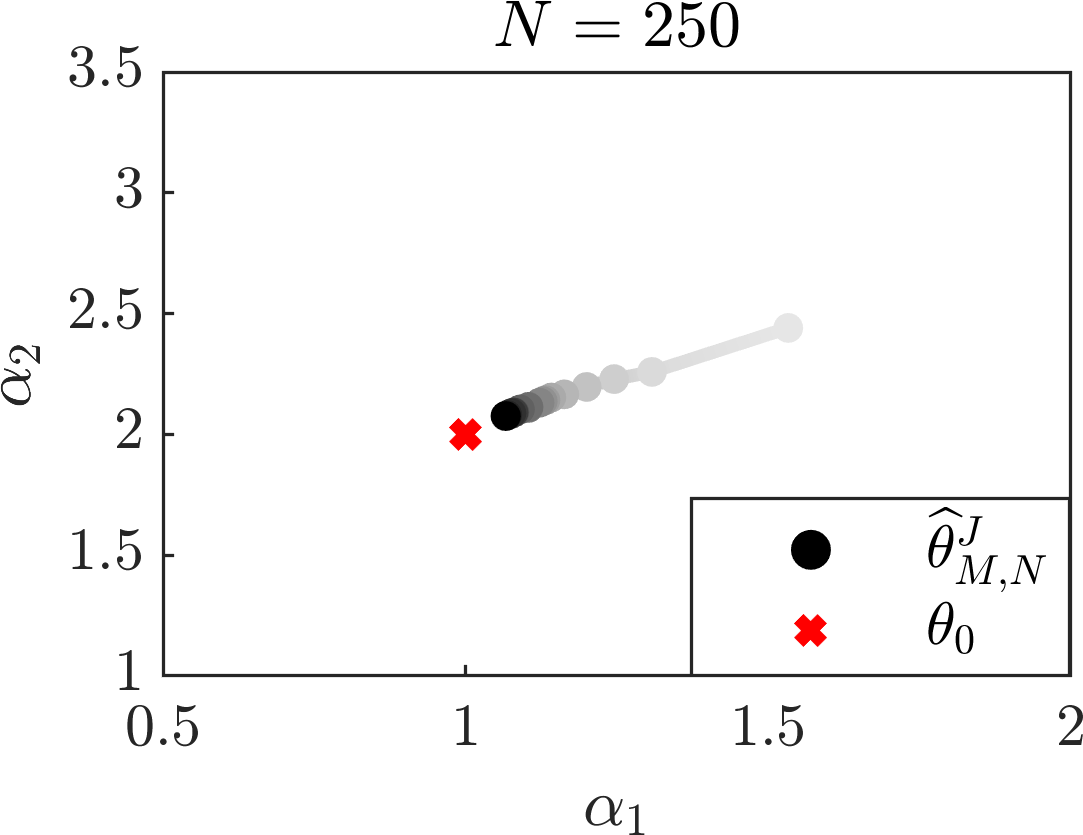} \hspace{0.5cm}
\includegraphics[]{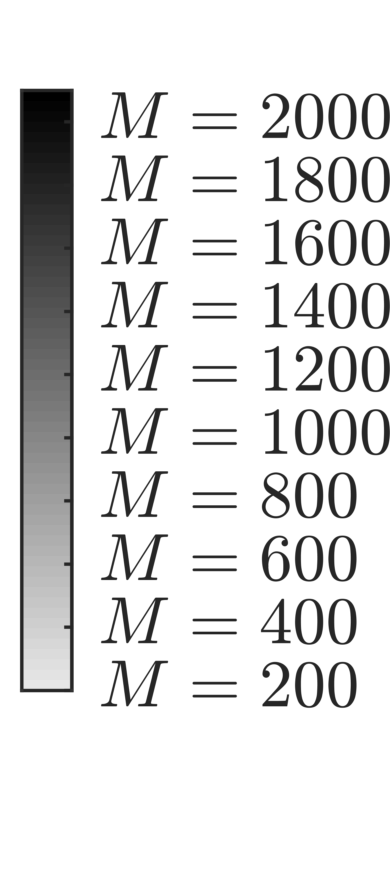} \\
\vspace{0.3cm}
\includegraphics[]{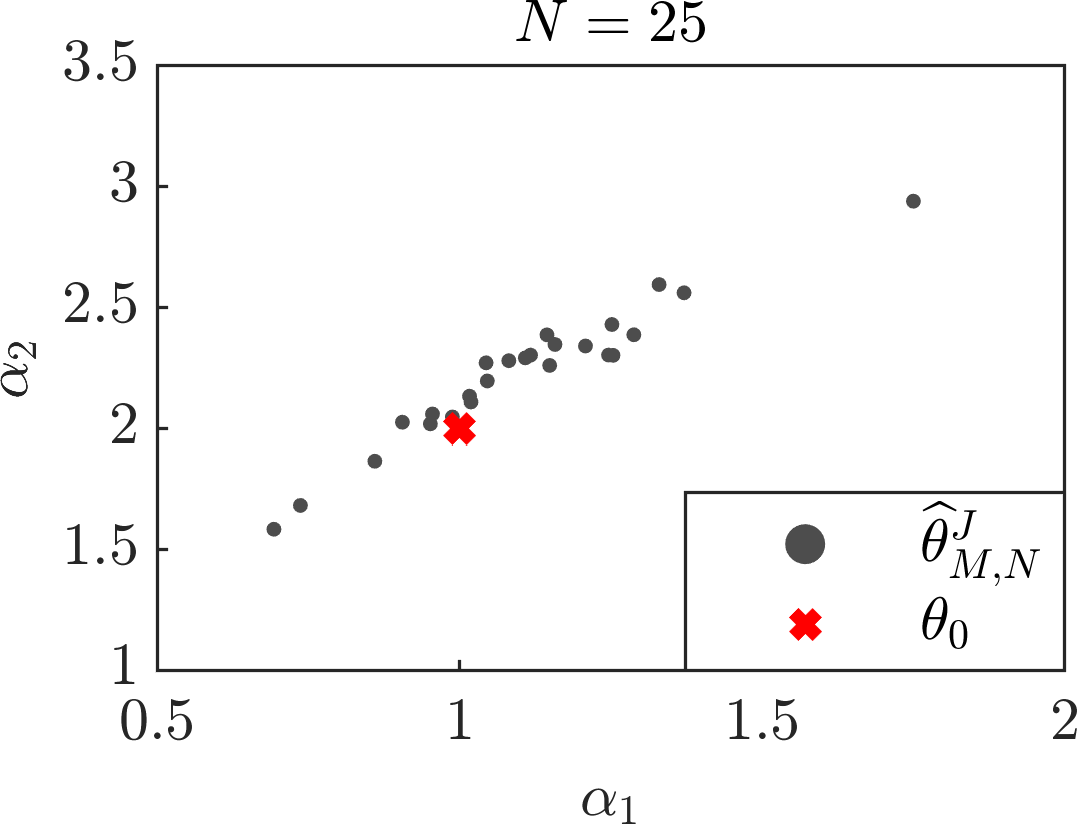} \hspace{0.15cm}
\begin{overpic}[]{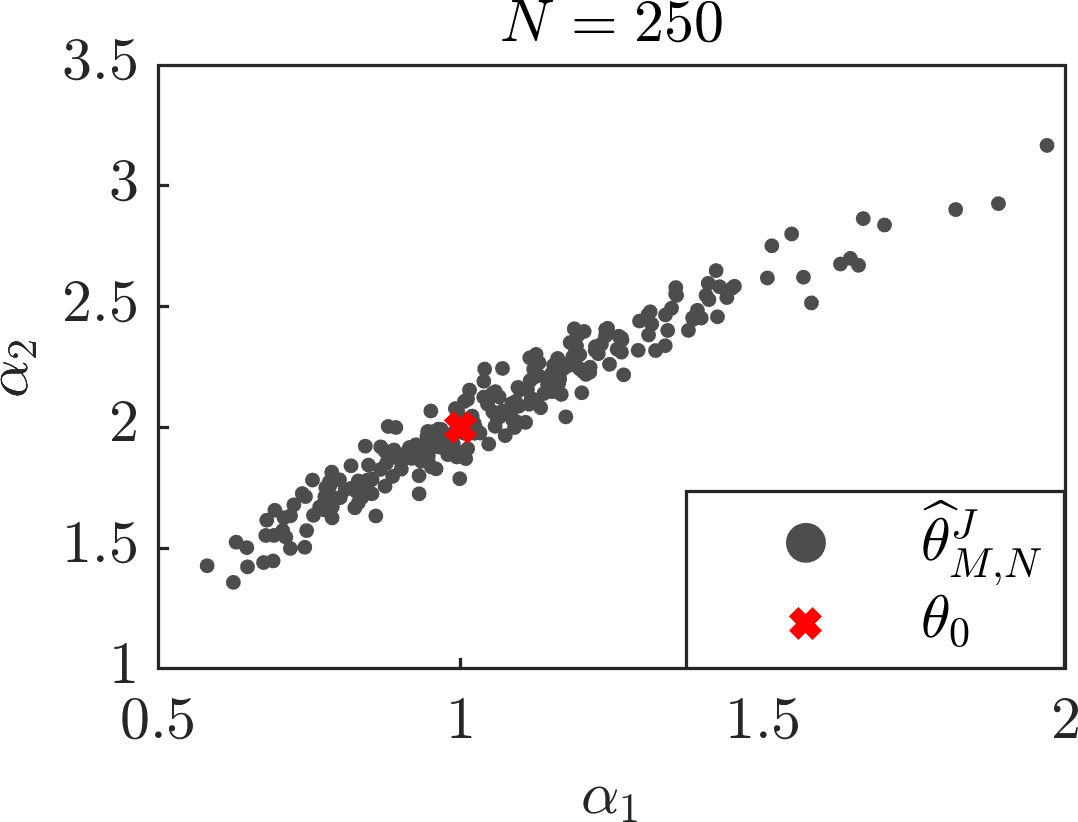} \put(200,60){$M = 2000$} \end{overpic} \hspace{0.6cm}
\phantom{\includegraphics[]{figures/legend_gray}}
\caption{Inference of the two-dimensional drift coefficient of the double well potential below the phase transition. Top: average of the estimations $\widehat \theta_{M,N}^J$ with $J = 1$ varying the number of observations. Bottom: scatter plot of the estimations obtained from each particle.}
\label{fig:bistable}
\end{figure}

\begin{figure}
\centering
\includegraphics[]{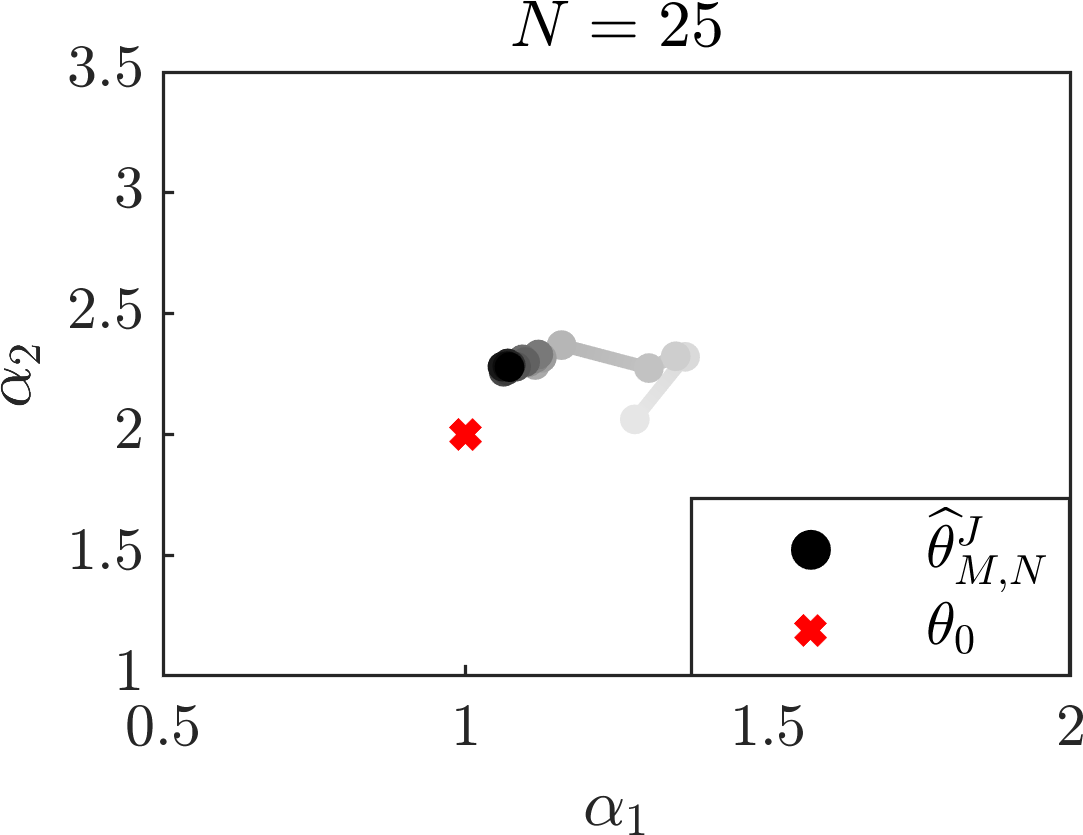}
\includegraphics[]{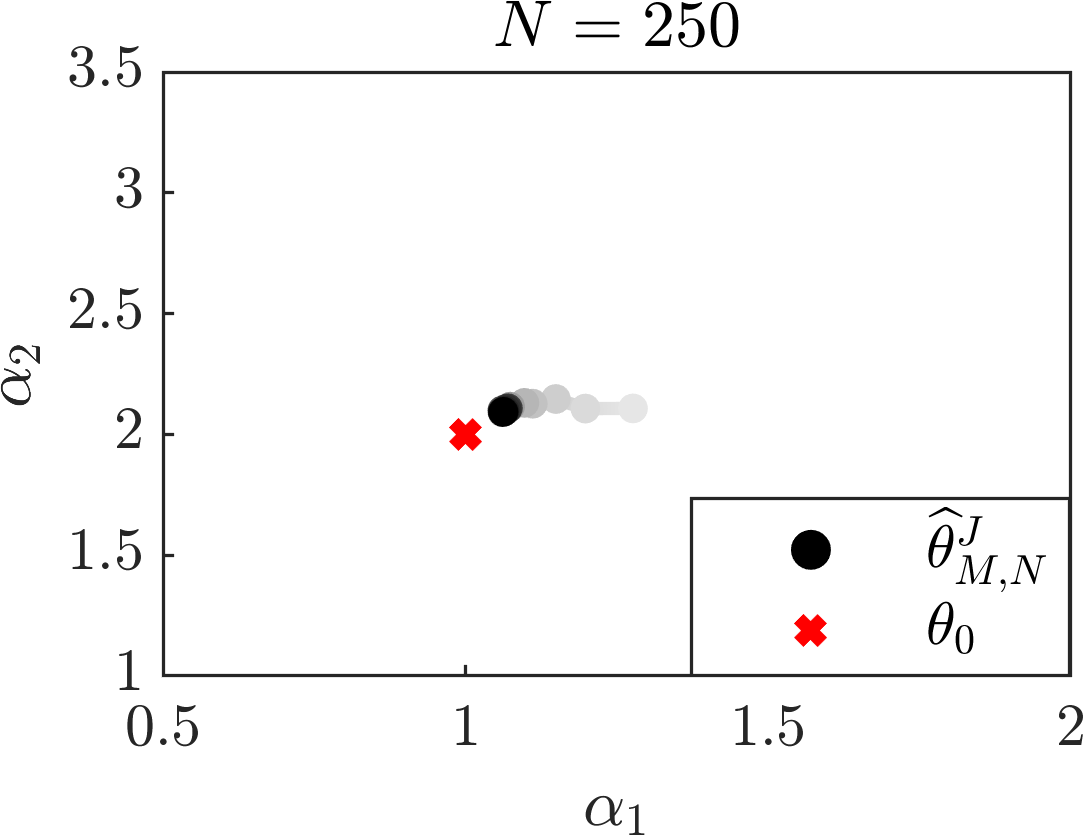} \hspace{0.5cm}
\includegraphics[]{figures/legend_gray} \\
\vspace{0.3cm}
\includegraphics[]{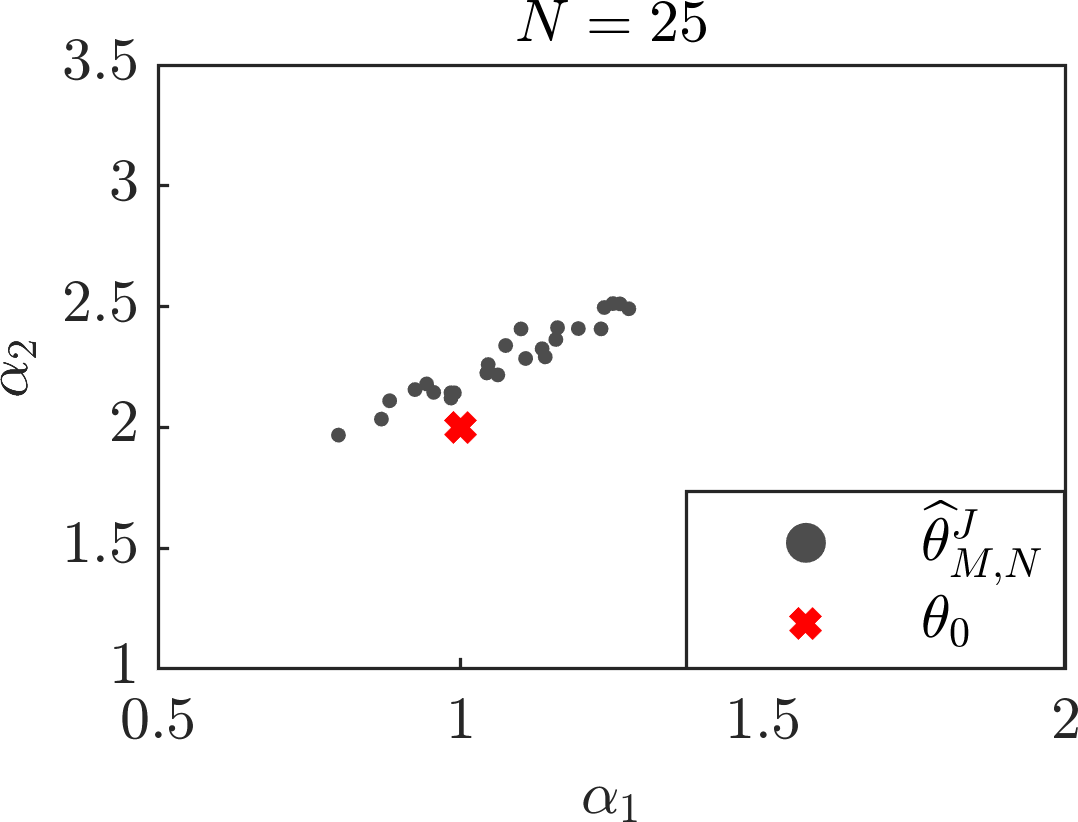} \hspace{0.15cm}
\begin{overpic}[]{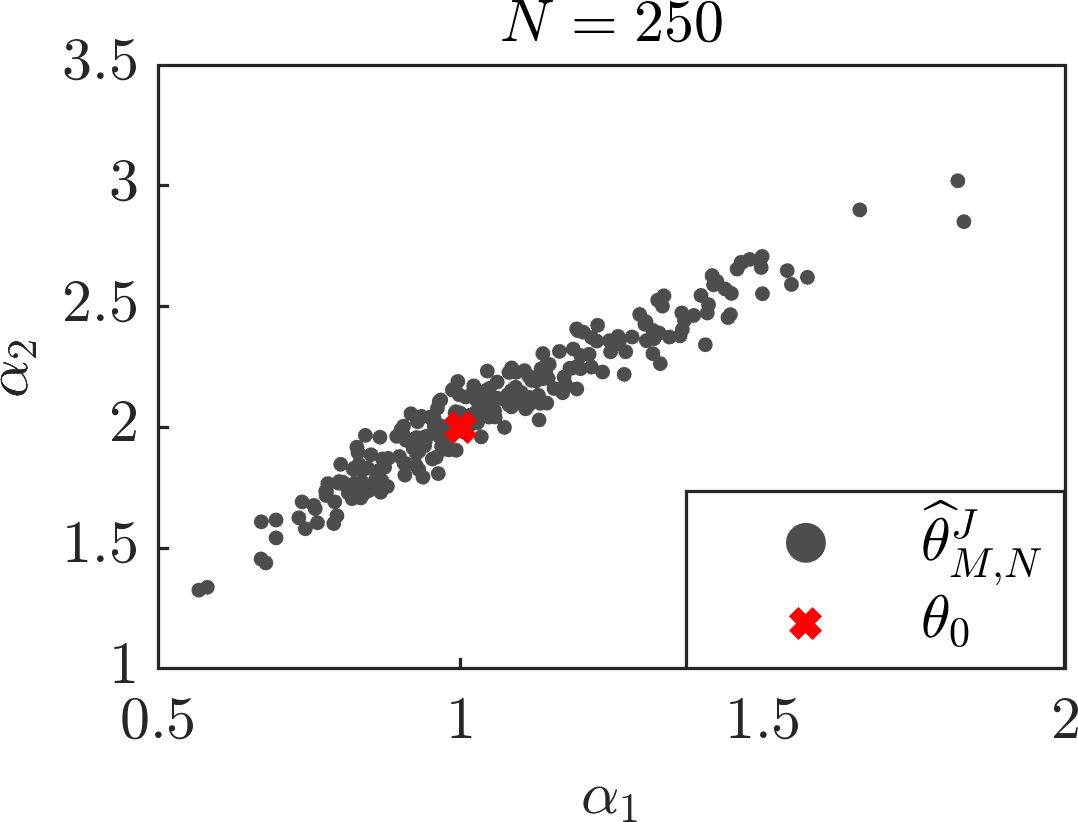} \put(200,60){$M = 2000$} \end{overpic} \hspace{0.6cm}
\phantom{\includegraphics[]{figures/legend_gray}}
\caption{Inference of the two-dimensional drift coefficient of the double well potential above the phase transition. Top: average of the estimations $\widehat \theta_{M,N}^J$ with $J = 1$ varying the number of observations. Bottom: scatter plot of the estimations obtained from each particle.}
\label{fig:bistable_mean_unknown}
\end{figure}

We consider the setting of \cref{exa:CW} and we analyse the double well potential, i.e., we let the confining potential $V(\cdot; \alpha)$ be 
\begin{equation}
V(x; \alpha) = \alpha \cdot \begin{pmatrix} \frac{x^4}{4} & -\frac{x^2}{2} \end{pmatrix}^\top,
\end{equation}
with $\alpha = \begin{pmatrix} 1 & 2 \end{pmatrix}^\top$, which is the parameter that we aim to estimate, so we write $\theta = \alpha$. Moreover, we set the interaction term $\kappa = 0.5$ and the number of observations $M = 2000$ with sampling rate $\Delta = 0.5$. Finally, to construct the estimating functions we use $J = 1$ eigenfunctions and eigenvalues and we employ the function $\psi_1(x;\theta) = \begin{pmatrix} x & x^3 \end{pmatrix}^\top$. We remark that this example does not fit in \cref{ass:potential}, but if the diffusion coefficient $\sigma$ is chosen sufficiently large, then we are below the phase transition and the mean field limit admits a unique invariant measure \cite{Daw83}, so the theory applies. However, when the diffusion coefficient $\sigma$ is below the critical noise strength, then a continuous phase transition occurs and two stationary states exist \cite{GoP18}. In particular, the transition point occurs at $\sigma \simeq 0.6$ with these data. We therefore perform two numerical experiments, one below and one above the phase transition, setting $\sigma = 0.75$ and $\sigma = 0.5$. In the former we have a unique invariant measure, so we can follow the usual approach, while in the latter we do not know in which state the data are converging. Nevertheless, the invariant distribution is known up to the first moment by equation \eqref{eq:rho_CW}, so we first estimate the expectation using the law of large numbers with the available observations and then repeat the same procedure as in the previous case. In \cref{fig:bistable,fig:bistable_mean_unknown} we plot the results of these two experiments. On the top of the figures we plot the evolution of our estimator varying the number of observations $M$ for two different values of the number of particles, in particular $N = 25$ and $N = 250$. We observe that the estimator approaches the correct drift coefficient $\alpha$ as the number of observations $M$ increases and, as expected, the final approximation is better when the number of particles is sufficiently large. Moreover, on the bottom of the same figures we show the scatter plot of the estimations obtained from each particle with $M = 2000$ observations and we can see that they are concentrated around the exact drift coefficient $\alpha$. We finally remark that we do not notice significant differences between two cases, yielding that the initial estimation of the first moment of the invariant measure does not affect the final results and thus that our methodology can be employed even when multiple stationary states exist.

\subsection{Nonsymmetric confining potential} \label{sec:num_nonsymmetric}

\begin{figure}
\centering
\hspace{0.5cm}
\begin{overpic}[]{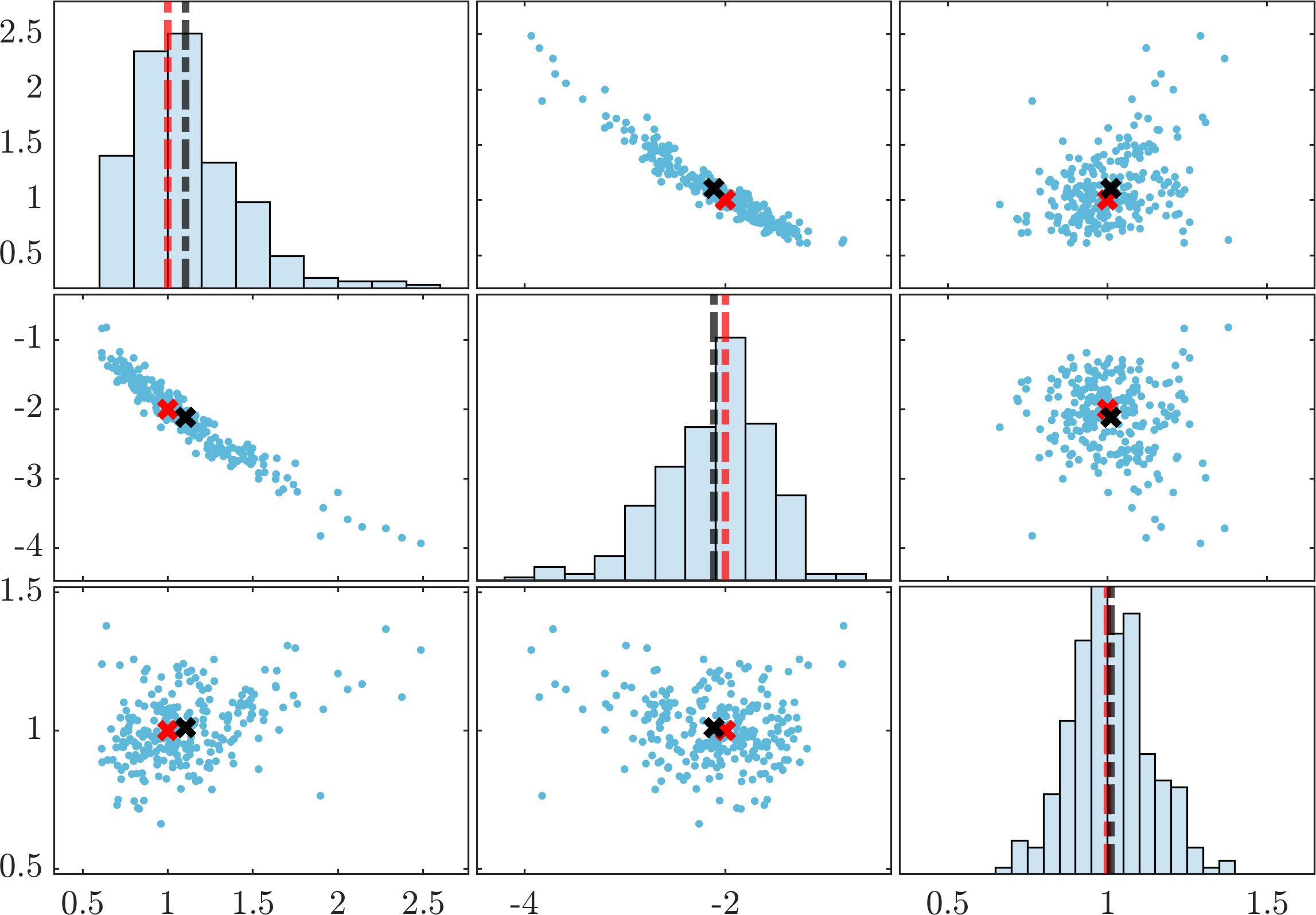} \put(70,-15){$\alpha_1$} \put(180,-15){$\alpha_2$} \put(290,-15){$\alpha_3$} \put(-25,45){$\alpha_3$} \put(-25,120){$\alpha_2$} \put(-25,195){$\alpha_1$} \end{overpic}
\vspace{0.5cm}
\caption{Inference of the three-dimensional drift coefficient of a nonsymmteric potential for the estimator $\widehat \theta_{M,N}^J$ with $J = 1$. Diagonal: histogram of the estimations of each component obtained from all particles. Off-diagonal: scatter plot of the estimations obtained from all particles for two components at a time. Black and red stars/lines represent the average of the estimations and the exact value, respectively.}
\label{fig:tugaut}
\end{figure}

We still consider the same setting of \cref{exa:CW} and we now study the case of a nonsymmetric potential. In particular, we let the confining potential $V(\cdot; \alpha)$ be
\begin{equation}
V(x;\alpha) = \alpha \cdot \begin{pmatrix} \frac{x^4}{4} & \frac{x^2}{2} & x \end{pmatrix}^\top,
\end{equation}
with $\alpha = \begin{pmatrix} 1 & -2 & 1 \end{pmatrix}^\top$, which is the unknown parameter that we want to infer, hence we set $\theta = \alpha$. Notice that the confining potential is given by the sum of the double well potential and a linear term which breaks the symmetry. This type of potentials of the form $V(x) = \sum_{\nu =1}^{\mathscr N} a_{2 \nu} s^{2\nu} + a_1 s$, where $\mathscr N \ge 2$, $a_1, a_2 \in \R$, $a_4, \dots, a_{2(\mathscr N - 1)} \ge 0$ and $a_{2 \mathscr N} > 0$, which is used in the study of metastability and phase transitions and may have arbitrarily deep double wells, has been analyzed in \cite{Tug14,Yos03}. Similarly to the experiment in \cref{sec:bistable}, this example does not satisfy \cref{ass:potential} and more stationary states can exist. In particular, in \cite{Tug14} it has been proved the existence of an invariant measure around each critical point of the potential. We therefore adopt the same strategy as in the second part of \cref{sec:bistable} and, since the invariant measure is known up to the first moment by equation \eqref{eq:rho_CW}, we first approximate the expectation using the sample mean of the available observations, and then proceed with the following steps of the algorithm. We further set the interaction term $\kappa = 0.5$, the diffusion coefficient $\sigma = 1.5$, the number of particles $N = 250$ and the number of observations $M = 2000$ with sampling rate $\Delta = 0.5$. Moreover, to construct the estimating functions we use $J = 1$ eigenfunctions and eigenvalues and we employ the function $\psi_1(x;\theta) = \begin{pmatrix} x & x^2 & x^3 \end{pmatrix}^\top$. In \cref{fig:tugaut} we plot the results of the inference procedure considering two components of the three-dimensional drift coefficient at a time and the single components alone. We observe that the majority of the estimations obtained from all particles are concentrated around the exact values and that their average provides a reliable approximation of the true unknown. A peculiarity of this numerical experiment is the relationship between the first and second components of the estimated drift coefficient, in fact one increases when the other decreases and vice-versa, meaning that the two approximations appear to be correlated.

\section{Proof of the main results} \label{sec:proofs}

In this section we present the proof of \cref{thm:main_unbiased,thm:main_rate,thm:main_normal}, which are the main results of this work. We first recall that due to \cite[Lemma 2.3.1]{Gan08} the solution of the interacting particle system $X_t^{(n)}$ and of its mean field limit $X_t$ have bounded moments of any order, in particular there exists a constant $C > 0$ independent of $N$ such that for all $t \in [0,T]$, $n = 1, \dots, N$ and $q \ge 1$
\begin{equation} \label{eq:bounded_moments}
\E \left[ \abs{X_t^{(n)}}^q \right]^{1/q} \le C \qquad \text{and} \qquad \E \left[ \abs{X_t}^q \right]^{1/q} \le C.
\end{equation}
Moreover, in \cite[Theorem 3.3]{Mal01} it is shown that each particle converges to the solution of the mean field limit with the same Brownian motion in $L^2$, i.e, that
\begin{equation} \label{eq:convergence2meanFieldLimit}
\sup_{t \in [0,T]} \E \left[ \abs{X_t^{(n)} - X_t}^2 \right]^{1/2} \le \frac{C}{\sqrt N},
\end{equation}
where the constant $C$ is also independent of the final time $T$. We also state here a formula which has been proved in \cite{KeS99} and will be crucial in the last part of the proof
\begin{equation} \label{eq:formula_martingale}
\E^{\mu_{\theta_0}} [ \phi_j(X_\Delta; \theta_0) \mid X_0 = x ] = e^{-\lambda_j(\theta_0) \Delta} \phi_j(x;\theta_0), \qquad \text{for all } j = 1, \dots, J,
\end{equation}
where $\theta_0$ is the true parameter which generates the path $(X_t)_{t \in [0,T]}$ and $\E^{\mu_{\theta_0}}$ denotes the fact that $X_0 \sim \mu_{\theta_0}$. Before entering the main part of the proof, we introduce some notation and technical results which will be used later. We finally remark that all the constants will be denoted by $C$ and their value can change from line to line.

\subsection{Limits of the estimating function and its derivative}

Let us first define the following vector-valued functions $\mathbb G_M^J(\theta), \mathcal G_N^J(\theta), \mathscr G^J(\theta) \colon \R^p \to \R^p$ and matrix-valued functions $\mathbb H_M^J(\theta), \mathcal H_N^J(\theta), \mathscr H^J(\theta) \colon \R^p \to \R^{p \times p}$
\begin{equation}
\label{eq:def_Hfunctions}
\begin{aligned}
&\mathbb G_M^J(\theta) \defeq \frac1M \sum_{m=0}^{M-1} \sum_{j=1}^J g_j(\widetilde X_m, \widetilde X_{m+1}; \theta), \quad &&\mathbb H_M^J(\theta) \defeq \frac1M \sum_{m=0}^{M-1} \sum_{j=1}^J h_j(\widetilde X_m, \widetilde X_{m+1}; \theta), \\
&\mathcal G_N^J(\theta) \defeq \sum_{j=1}^J \E^{\mu^N_{\theta_0}} \left[ g_j(X_0^{(n)}, X_\Delta^{(n)}; \theta) \right], && \mathcal H_N^J(\theta) \defeq \sum_{j=1}^J \E^{\mu^N_{\theta_0}} \left[ h_j(X_0^{(n)}, X_\Delta^{(n)}; \theta) \right], \\
&\mathscr G^J(\theta) \defeq \sum_{j=1}^J \E^{\mu_{\theta_0}} \left[ g_j(X_0, X_\Delta; \theta) \right], && \mathscr H^J(\theta) \defeq \sum_{j=1}^J \E^{\mu_{\theta_0}} \left[ h_j(X_0, X_\Delta; \theta) \right].
\end{aligned}
\end{equation}

The following lemma then shows that these quantities are bounded in a suitable norm and thus well defined.

\begin{lemma} \label{lem:boundedG}
Under \cref{ass:potential,ass:functions_psi} there exists a constant $C > 0$ independent of $M,N$ such that for all $q \ge 1$
\begin{equation}
\begin{alignedat}{4}
(i) \; & \E \left[ \norm{G_{M,N}^J(\theta)}^q \right] \le C, \hspace{2cm} &&(ii) \; \E \left[ \norm{\mathbb G_M^J(\theta)}^q \right] \le C, \\
(iii) \; & \norm{\mathcal G_N^J(\theta)} \le C, \hspace{2cm} &&(iv) \; \norm{\mathscr G^J(\theta)} \le C.
\end{alignedat}
\end{equation}
\end{lemma}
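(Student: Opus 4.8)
The plan is to prove the four bounds in \cref{lem:boundedG} by reducing everything to moment bounds on the observations, using the polynomial boundedness hypotheses in \cref{ass:functions_psi} together with the uniform-in-$N$ moment estimate \eqref{eq:bounded_moments}. The key structural observation is that $g_j(x,y;\theta) = \psi_j(x;\theta)(\phi_j(y;\theta) - e^{-\lambda_j(\theta)\Delta}\phi_j(x;\theta))$ is, for each fixed $\theta$ and $j$, a product of functions that are polynomially bounded in $x$ and $y$ uniformly over the compact set $\Theta$; moreover $0 < e^{-\lambda_j(\theta)\Delta} \le 1$ so that factor is harmless. Hence there exist an integer $r$ and a constant $C$ (depending on $J$, $\Delta$, $\Theta$ but not on $M,N$) such that $\norm{g_j(x,y;\theta)} \le C(1 + |x|^r + |y|^r)$ for all $j \le J$ and all $\theta \in \Theta$, and therefore $\norm{\sum_{j=1}^J g_j(x,y;\theta)} \le C(1 + |x|^r + |y|^r)$.

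From this pointwise bound the four estimates follow in order. For $(i)$: $G_{M,N}^J(\theta) = \frac1M\sum_{m=0}^{M-1}\sum_{j=1}^J g_j(\widetilde X_m^{(n)}, \widetilde X_{m+1}^{(n)};\theta)$, so by the triangle inequality and Jensen's inequality (convexity of $t \mapsto t^q$),
\begin{equation}
\E\left[\norm{G_{M,N}^J(\theta)}^q\right] \le \frac1M\sum_{m=0}^{M-1} \E\left[\norm{\textstyle\sum_{j=1}^J g_j(\widetilde X_m^{(n)}, \widetilde X_{m+1}^{(n)};\theta)}^q\right] \le \frac{C}{M}\sum_{m=0}^{M-1}\E\left[(1 + |\widetilde X_m^{(n)}|^r + |\widetilde X_{m+1}^{(n)}|^r)^q\right],
\end{equation}
and each summand is bounded by a constant independent of $M,N$ because $\widetilde X_m^{(n)} = X_{t_m}^{(n)}$ with $t_m \in [0,T]$ and \eqref{eq:bounded_moments} gives $\E[|X_t^{(n)}|^{rq}] \le C$ uniformly. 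The average of uniformly bounded terms is uniformly bounded, giving $(i)$. Statement $(ii)$ is identical since $\widetilde X_m$ denotes the observations of the mean field process, which also has uniformly bounded moments by the second bound in \eqref{eq:bounded_moments}. For $(iii)$ and $(iv)$, the quantities $\mathcal G_N^J(\theta)$ and $\mathscr G^J(\theta)$ are single expectations under the invariant measures $\mu_N$ (for the $N$-particle system) and $\mu$ (for the mean field limit) respectively; applying the same pointwise bound and using that under $\mu_N$ and $\mu$ the random variables $X_0^{(n)}, X_\Delta^{(n)}$ (resp. $X_0, X_\Delta$) are stationary with moments controlled by \eqref{eq:bounded_moments} yields $\norm{\mathcal G_N^J(\theta)} \le C\,\E^{\mu_N}[1 + |X_0^{(n)}|^r + |X_\Delta^{(n)}|^r] \le C$ and likewise for $\mathscr G^J(\theta)$.

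I expect the only genuine subtlety — rather than an obstacle — to be bookkeeping: making sure the polynomial degree $r$ can indeed be chosen uniformly in $j = 1,\dots,J$ and in $\theta \in \Theta$, which is exactly what \cref{ass:potential} (polynomial growth of $V,W$ and derivatives uniformly in the parameters) and \cref{ass:functions_psi}(ii)–(iii) are designed to guarantee, and checking that the moment bound \eqref{eq:bounded_moments} is genuinely uniform in $N$ so that the constants in $(i)$ and $(iii)$ do not degrade as $N \to \infty$ — this is furnished by \cite[Lemma 2.3.1]{Gan08} as recalled at the start of the section. No concentration or martingale argument is needed here; the lemma is purely a moment-boundedness statement, so crude triangle/Jensen estimates suffice throughout.
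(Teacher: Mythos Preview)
Your proposal is correct and follows essentially the same approach as the paper: both arguments reduce the bound to the polynomial growth of $\psi_j$ and $\phi_j$ from \cref{ass:functions_psi} combined with the uniform moment estimate \eqref{eq:bounded_moments}. The only cosmetic difference is that you package the control into a single pointwise bound $\norm{g_j(x,y;\theta)} \le C(1+|x|^r+|y|^r)$ and then apply Jensen, whereas the paper keeps the product $\norm{\psi_j}^q|\phi_j|^q$ and splits it via Cauchy--Schwarz; both routes are equivalent and the paper likewise only spells out $(i)$ and declares the remaining cases analogous.
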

\begin{proof}
Since the argument is similar for four cases, we only write the details of $(i)$. Using the triangle inequality we have
\begin{equation}
\E \left[ \norm{G_{M,N}^J(\theta)}^q \right] \le \frac{2^{q-1}}{M} \sum_{m=0}^{M-1} \sum_{j=1}^J \E \left[ \norm{\psi_j(\widetilde X_m^{(n)};\theta)}^q \left( \abs{\phi_j(\widetilde X_{m+1}^{(n)};\theta)}^q + \abs{\phi_j(\widetilde X_m^{(n)};\theta)}^q \right) \right],
\end{equation}
and due to the Cauchy--Schwarz inequality we obtain
\begin{equation}
\begin{aligned}
\E \left[ \norm{G_{M,N}^J(\theta)} \right] &\le \frac{2^{q-1}}{M} \sum_{m=0}^{M-1} \sum_{j=1}^J \E \left[ \norm{\psi_j(\widetilde X_m^{(n)}; \theta)}^{2q} \right]^{1/2} \E \left[ \abs{\phi_j (\widetilde X_{m+1}^{(n)}; \theta)}^{2q} \right]^{1/2} \\
&\quad + \frac{2^{q-1}}{M} \sum_{m=0}^{M-1} \sum_{j=1}^J \E \left[ \norm{\psi_j(\widetilde X_m^{(n)}; \theta)}^{2q} \right]^{1/2} \E \left[ \abs{\phi_j(\widetilde X_m^{(n)}; \theta)}^{2q} \right]^{1/2}.
\end{aligned}
\end{equation}
Finally, bound \eqref{eq:bounded_moments} together with the fact that $\psi_j$ and $\phi_j$ are polynomially bounded for all $j = 1, \dots, J$ by \cref{ass:functions_psi} gives the desired result.
\end{proof}

In the next proposition we study the behaviour of the estimating function $G_{M,N}^J$ as the number of observations $M$ and particles $N$ go to infinity.

\begin{proposition} \label{pro:limitsG}
Under \cref{ass:potential,ass:functions_psi} it holds for all $1 \le q < 2$
\begin{equation}
\begin{alignedat}{4}
(i) \; & \lim_{N \to \infty} G_{M,N}^J(\theta) = \mathbb G_M^J(\theta), \quad &&\text{in } L^q, \hspace{2cm} (ii) \; \lim_{M \to \infty} \mathbb G_M^J(\theta) = \mathscr G^J(\theta), \quad \text{in } L^2, \\
(iii) \; & \lim_{M \to \infty}  G_{M,N}^J(\theta) = \mathcal G_N^J(\theta), \quad &&\text{in } L^2, \hspace{2cm} (iv) \; \lim_{N \to \infty} \mathcal G_N^J(\theta) = \mathscr G^J(\theta).
\end{alignedat}
\end{equation}
Moreover, there exists a constant $C > 0$ independent of $M,N$ and $\theta$ such that
\begin{equation}
(i)' \; \E \left[ \norm{G_{M,N}^J(\theta) - \mathbb G_M^J(\theta)}^q \right]^{1/q} \le \frac{C}{\sqrt N}, \hspace{2cm} (iv)' \; \norm{\mathcal G_N^J(\theta) - \mathscr G^J(\theta)} \le \frac{C}{\sqrt N}.
\end{equation}
\end{proposition}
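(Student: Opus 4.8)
The plan is to establish the two $N\to\infty$ statements (i) and (iv), together with their rates (i)$'$ and (iv)$'$, from the propagation of chaos estimate \eqref{eq:convergence2meanFieldLimit}, and the two $M\to\infty$ statements (ii) and (iii) from $L^2$ ergodic theorems, for the mean field limit \eqref{eq:SDE_MFL} and for the $n$-th particle of \eqref{eq:SDE_N} respectively. The common analytic ingredient is that, by \cref{ass:functions_psi}, the partial derivatives of $g_j(\cdot,\cdot;\theta)$ in its first two arguments are polynomially bounded uniformly in $\theta$, so the mean value theorem gives $\bigl|g_j(x',y';\theta)-g_j(x,y;\theta)\bigr|\le\bigl(|x'-x|+|y'-y|\bigr)P_j(x,x',y,y')$ for a polynomial $P_j$ whose coefficients do not depend on $\theta$. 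For (i) and (i)$'$ I would apply this with $x'=\widetilde X_m^{(n)}$, $x=\widetilde X_m$, $y'=\widetilde X_{m+1}^{(n)}$, $y=\widetilde X_{m+1}$, and for $1\le q<2$ use H\"older's inequality with conjugate exponents $2/q$ and $2/(2-q)$: the first factor is controlled by \eqref{eq:convergence2meanFieldLimit} --- crucially with a constant independent of the horizon $T=M\Delta$, hence of $M$ --- and the second by the uniform moment bounds \eqref{eq:bounded_moments}, which yields $\E[\,|g_j(\widetilde X_m^{(n)},\widetilde X_{m+1}^{(n)};\theta)-g_j(\widetilde X_m,\widetilde X_{m+1};\theta)|^q\,]^{1/q}\le C/\sqrt N$ uniformly in $m,M,N$. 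Averaging over $m$ and summing over $j$ via the triangle inequality in $L^q$ gives (i)$'$, and (i) is immediate. The restriction $q<2$ enters exactly here, so that the H\"older conjugate $2q/(2-q)$ stays finite and the polynomial moment remains bounded.

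For (ii) and (iii) I would use that, under \cref{ass:potential}, the $N$-particle system is geometrically ergodic with invariant measure $\mu^N_\theta$, and the diffusion generated by the linearized operator $\mathcal L$ is geometrically ergodic with invariant measure $\mu_\theta$, its effective potential $V(\cdot;\alpha)+W(\cdot;\kappa)*\rho(\cdot;\theta)$ being uniformly convex. Denoting by $Z_m$ the pair of states at times $(t_m,t_{m+1})$ along the relevant trajectory, by $f$ the polynomially growing functional $\sum_{j=1}^J g_j(\cdot,\cdot;\theta)$, and by $\pi$ the corresponding stationary two-time marginal, I would split the time average into a bias term $\frac1M\sum_{m}\bigl(\E[f(Z_m)]-\E^\pi[f]\bigr)$, which tends to zero as $M\to\infty$ by exponential convergence to equilibrium together with the Ces\`aro lemma, and a fluctuation term whose second moment equals $\frac1{M^2}\sum_{m,k}\operatorname{Cov}(f(Z_m),f(Z_k))=\mathcal O(1/M)$ by exponential decay of correlations; the polynomial growth of $f$ and the moment bounds \eqref{eq:bounded_moments} make all these quantities finite. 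This yields the $L^2$ convergences towards $\mathscr G^J(\theta)$ and $\mathcal G_N^J(\theta)$; for the mean field limit one should additionally recall that before stationarity it is a time-inhomogeneous McKean evolution converging to the homogeneous ergodic diffusion with generator $\mathcal L$, so the ergodic average is unaffected.

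For (iv) and (iv)$'$ the idea is to transfer the finite-time estimate \eqref{eq:convergence2meanFieldLimit} to the stationary regime by exploiting its uniformity in $T$: running both dynamics from chaotic initial data and letting $T\to\infty$, the geometric ergodicity of \eqref{eq:SDE_N} and of \eqref{eq:SDE_MFL} lets one pass the bound $\E[\,|X_T^{(n)}-X_T|^2+|X_{T+\Delta}^{(n)}-X_{T+\Delta}|^2\,]^{1/2}\le C/\sqrt N$ to the limit, which produces a coupling of the stationary two-time marginals of particle $n$ and of the mean field limit whose $L^2$ distance is at most $C/\sqrt N$. Applying once more the mean value estimate on $g_j$, together with the Cauchy--Schwarz inequality and the (limiting) moment bounds \eqref{eq:bounded_moments}, gives $\norm{\mathcal G_N^J(\theta)-\mathscr G^J(\theta)}\le C/\sqrt N$, that is (iv)$'$, and (iv) follows at once.

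I expect (iv) and (iv)$'$ to be the main obstacle, since this is the only step where the $T$-uniformity of the propagation of chaos estimate \eqref{eq:convergence2meanFieldLimit} must be combined with the geometric ergodicity of both the particle system and its mean field limit, and where one must verify that the relevant moment bounds survive the passage $T\to\infty$. The $L^2$ ergodic theorems behind (ii) and (iii) also require some care for the mean field limit, precisely because it is only asymptotically, rather than exactly, a homogeneous Markov diffusion.
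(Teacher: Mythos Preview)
Your proposal is correct and follows essentially the same approach as the paper: mean value theorem plus H\"older plus the uniform-in-time propagation of chaos bound \eqref{eq:convergence2meanFieldLimit} for (i), (i)$'$, (iv), (iv)$'$, and an ergodic theorem for (ii), (iii). The only cosmetic differences are that the paper splits $g_j$ into four explicit terms and applies a three-factor H\"older with exponents $4/(2-q),4/(2-q),2/q$ rather than your bundled two-factor version, and for (ii), (iii) it simply cites \cite[Lemma~3.1]{BiS95} together with the ergodicity results in \cite{GoP18,Mal01} instead of writing out the bias--fluctuation decomposition; your more explicit treatment of the stationary coupling behind (iv)$'$ is exactly what the paper's ``similar to (i), (i)$'$'' is hiding.
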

\begin{proof}
Results $(ii)$ and $(iii)$ are direct consequences of \cite[Lemma 3.1]{BiS95} and of the ergodicity of the processes $(X_t^{(n)})_{t \in [0,T]}$ and $(X_t)_{t \in [0,T]}$ given by \cite[Section 1]{GoP18} and \cite[Theorem 3.16]{Mal01}, respectively. Let us now consider cases $(i)$ and $(i)'$. Using the triangle inequality we have
\begin{equation}
\E \left[ \norm{G_{M,N}^J(\theta) - \mathbb G_M^J(\theta)}^q \right] \le \frac{4^{q-1}}{M} \sum_{m=0}^{M-1} \sum_{j=1}^J \left( Q_{m,j}^{(1)} + Q_{m,j}^{(2)} + Q_{m,j}^{(3)} + Q_{m,j}^{(4)} \right),
\end{equation}
where
\begin{equation}
\begin{aligned}
Q_{m,j}^{(1)} &\defeq \E \left[ \norm{\psi_j(\widetilde X_m^{(n)}; \theta)}^q \abs{\phi_j(\widetilde X_{m+1}^{(n)}; \theta) - \phi_j(\widetilde X_{m+1}; \theta)}^q \right], \\
Q_{m,j}^{(2)} &\defeq \E \left[ \norm{\psi_j(\widetilde X_m^{(n)}; \theta)}^q \abs{\phi_j(\widetilde X_m^{(n)}; \theta) - \phi_j(\widetilde X_m; \theta)}^q \right], \\
Q_{m,j}^{(3)} &\defeq \E \left[ \norm{\psi_j(\widetilde X_m^{(n)}; \theta) - \psi_j(\widetilde X_m; \theta)}^q \abs{\phi_j(\widetilde X_{m+1}; \theta)}^q \right], \\
Q_{m,j}^{(4)} &\defeq \E \left[ \norm{\psi_j(\widetilde X_m^{(n)}; \theta) - \psi_j(\widetilde X_m; \theta)}^q \abs{\phi_j(\widetilde X_m; \theta)}^q \right],
\end{aligned}
\end{equation}
and applying the mean value theorem we obtain
\begin{equation}
\begin{aligned}
Q_{m,j}^{(1)} &\le \E \left[ \norm{\psi_j(\widetilde X_m^{(n)}; \theta)}^q \abs{\int_0^1 \phi_j'(\widetilde X_{m+1} + s(\widetilde X_{m+1}^{(n)} - \widetilde X_{m+1}); \theta) \dd s}^q \abs{\widetilde X_{m+1}^{(n)} - \widetilde X_{m+1}}^q \right], \\
Q_{m,j}^{(2)} &\le \E \left[ \norm{\psi_j(\widetilde X_m^{(n)}; \theta)}^q \abs{\int_0^1 \phi_j'(\widetilde X_m + s(\widetilde X_m^{(n)} - \widetilde X_m); \theta) \dd s}^q \abs{\widetilde X_m^{(n)} - \widetilde X_m}^q \right], \\
Q_{m,j}^{(3)} &\le \E \left[ \norm{\int_0^1 \psi_j'(\widetilde X_m + s(\widetilde X_m^{(n)} - \widetilde X_m); \theta) \dd s}^q \abs{\widetilde X_m^{(n)} - \widetilde X_m}^q \abs{\phi_j(\widetilde X_{m+1}; \theta)}^q \right], \\
Q_{m,j}^{(4)} &\le \E \left[ \norm{\int_0^1 \psi_j'(\widetilde X_m + s(\widetilde X_m^{(n)} - \widetilde X_m); \theta) \dd s}^q \abs{\widetilde X_m^{(n)} - \widetilde X_m}^q \abs{\phi_j(\widetilde X_m; \theta)}^q \right].
\end{aligned}
\end{equation}
Then, employing the Hölder inequality with exponents $4/(2-q), 4/(2-q), 2/q$ and since $\phi_j, \phi_j', \psi_j, \psi_j'$ are polynomially bounded by \cref{ass:functions_psi} and $\widetilde X_m^{(n)}, \widetilde X_m$ have bounded moments of any order by \eqref{eq:bounded_moments} we deduce
\begin{equation}
\E \left[ \norm{G_{M,N}^J(\theta) - \mathbb G_M^J(\theta)}^q \right] \le \frac{C}{M} \sum_{m=0}^{M-1} \sum_{j=1}^J \left( \E \left[ (\widetilde X_m^{(n)} - \widetilde X_m)^2 \right]^\frac{q}{2} + \E \left[ (\widetilde X_{m+1}^{(n)} - \widetilde X_{m+1})^2 \right]^\frac{q}{2} \right),
\end{equation}
which due to \eqref{eq:convergence2meanFieldLimit} proves $(i)'$, which directly implies $(i)$. Finally, the proofs of results $(iv)$ and $(iv)'$ are similar to cases $(i)$ and $(i)'$, respectively, and are omitted here.
\end{proof}

\begin{corollary}
Under \cref{ass:potential,ass:functions_psi} it holds for all $1 \le q < 2$
\begin{equation}
\lim_{M,N \to \infty} G_{M,N}^J(\theta) = \mathscr G^J(\theta), \quad \text{in $L^q$}.
\end{equation}
\end{corollary}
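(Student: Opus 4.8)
The plan is to derive the joint $L^q$ limit directly from \cref{pro:limitsG} by inserting the intermediate quantity $\mathbb G_M^J(\theta)$, which has the pleasant feature of decoupling the two asymptotic parameters: it depends on $M$ but not on $N$. First I would apply the triangle inequality in $L^q$ to write
\begin{equation}
\E \left[ \norm{G_{M,N}^J(\theta) - \mathscr G^J(\theta)}^q \right]^{1/q} \le \E \left[ \norm{G_{M,N}^J(\theta) - \mathbb G_M^J(\theta)}^q \right]^{1/q} + \E \left[ \norm{\mathbb G_M^J(\theta) - \mathscr G^J(\theta)}^q \right]^{1/q}.
\end{equation}

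Next I would control the first term on the right with estimate $(i)'$ of \cref{pro:limitsG}, which bounds it by $C/\sqrt N$ with $C$ independent of $M$; hence this contribution vanishes as $N \to \infty$, uniformly in $M$. For the second term I would invoke part $(ii)$ of \cref{pro:limitsG}, which gives $\mathbb G_M^J(\theta) \to \mathscr G^J(\theta)$ in $L^2$ as $M \to \infty$; since the underlying space is a probability space, Hölder's inequality yields $\E[\,\norm{\cdot}^q\,]^{1/q} \le \E[\,\norm{\cdot}^2\,]^{1/2}$ for $1 \le q < 2$, so the same convergence holds in $L^q$, and it does not involve $N$ at all. Putting the two bounds together, the left-hand side is at most $C/\sqrt N$ plus a term that tends to zero as $M \to \infty$, and letting $M, N \to \infty$ in an arbitrary fashion gives the claim.

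I do not anticipate any genuine obstacle here: the substantive work has effectively already been done in \cref{pro:limitsG}, which was phrased precisely so that the $N$-direction is governed by a bound uniform in $M$ and the $M$-direction by a quantity free of $N$. The only minor point worth recording is the elementary passage from $L^2$ to $L^q$ convergence, valid because we work on a finite (probability) measure space; alternatively, one could route the argument through $\mathcal G_N^J(\theta)$ using parts $(iii)$ and $(iv)'$ instead, which works equally well.
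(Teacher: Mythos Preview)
Your argument is correct and matches the paper's own proof essentially step for step: both insert $\mathbb G_M^J(\theta)$, apply the triangle inequality, and invoke $(i)'$ and $(ii)$ of \cref{pro:limitsG}. The only cosmetic differences are that the paper writes the inequality at the level of $\E[\norm{\cdot}^q]$ with a $2^{q-1}$ factor rather than using Minkowski, and it leaves the $L^2 \Rightarrow L^q$ step implicit where you spell it out.
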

\begin{proof}
Employing the triangle inequality we have
\begin{equation}
\E \left[ \norm{G_{M,N}^J(\theta) - \mathscr G^J(\theta)}^q \right] \le 2^{q-1} \left( \E \left[ \norm{G_{M,N}^J(\theta) - \mathbb G_M^J(\theta)}^q \right] + \E \left[ \norm{\mathbb G_M^J(\theta) - \mathscr G^J(\theta)}^q \right] \right),
\end{equation}
where the right-hand side vanishes by $(i)'$ and $(ii)$ in \cref{pro:limitsG}, yielding the desires result.
\end{proof}

The limits considered in \cref{pro:limitsG} are summarized schematically in the following diagram
\begin{center}
\begin{tikzpicture}
\node at (-5,0) (W) {$G_{M,N}^J(\theta)$};
\node at (0,1) (N) {$\mathbb G_M^J(\theta)$};
\node at (0,-1) (S) {$\mathcal G_N^J(\theta)$};
\node at (5,0) (E) {$\mathscr G^J(\theta)$};
\draw [->] (W) -- (N);
\node at (-2.5,0.75) {\emph{in $L^q$}};
\node at (-2,0.25) {$N \to \infty$};
\draw [->] (N) -- (E);
\node at (2.5,0.75) {\emph{in $L^2$}};
\node at (2,0.25) {$M \to \infty$};
\draw [->] (W) -- (S);
\node at (-2.5,-0.75) {\emph{in $L^2$}};
\node at (-2,-0.25) {$M \to \infty$};
\draw [->] (S) -- (E);
\node at (2,-0.25) {$N \to \infty$};
\end{tikzpicture}
\end{center}
where $q \in [1,2)$.

\begin{remark} \label{rem:convergence_derivatives}
Notice that all the results in this section hold true also for the derivatives $\mathbb H_M^J(\theta)$, $\mathcal H_N^J(\theta)$, $\mathscr H^J(\theta)$ with respect to the parameter $\theta$ defined in \eqref{eq:def_Hfunctions}. Since the arguments are analogous we omit the details here.
\end{remark}

\subsection{Zeros of the limits of the estimating function}

The goal of this section is to show that the limits of the estimating functions previously defined admit zeros and to study their asymptotic limit. We already know by \eqref{eq:formula_martingale} that $\mathscr G^J(\theta_0) = 0$, where $\theta_0$ is the true parameter. Then, in the following lemma we consider the zero of the function $\mathcal G_N^J(\theta)$ and its limit as $N \to \infty$.

\begin{lemma} \label{lem:NtoInf}
Under \cref{ass:potential,ass:functions_psi} and if $\det(\mathscr H^J(\theta_0)) \neq 0$ there exists $N_0 > 0$ such that for all $N > N_0$ there exists $\vartheta_N^J \in \Theta$ which solves the system $\mathcal G_N^J(\theta) = 0$ and satisfies $\det (\mathcal H_N^J(\vartheta_N^J)) \neq 0$. Moreover, there exists a constant $C > 0$ independent of $N$ such that
\begin{equation} \label{eq:difference_thetaN}
\norm{\vartheta_N^J - \theta_0} \le \frac{C}{\sqrt N}.
\end{equation}
\end{lemma}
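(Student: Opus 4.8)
The strategy is to apply the quantitative inverse/implicit function theorem (or a Newton–Kantorovich type argument) to the map $\mathcal G_N^J$ around the point $\theta_0$, using that we already know $\mathscr G^J(\theta_0) = \mathbf 0$ from \eqref{eq:formula_martingale} and $\det(\mathscr H^J(\theta_0)) \neq 0$ by hypothesis. First I would record the two ingredients supplied by \cref{pro:limitsG}(iv)$'$ and \cref{rem:convergence_derivatives}: there is $C > 0$, independent of $N$, with $\norm{\mathcal G_N^J(\theta) - \mathscr G^J(\theta)} \le C/\sqrt N$ and $\norm{\mathcal H_N^J(\theta) - \mathscr H^J(\theta)} \le C/\sqrt N$, both uniformly in $\theta \in \Theta$. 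In particular $\mathcal G_N^J(\theta_0) = \mathscr G^J(\theta_0) + O(1/\sqrt N) = O(1/\sqrt N)$, so $\theta_0$ is an approximate zero of $\mathcal G_N^J$, and $\mathcal H_N^J$ is uniformly close to $\mathscr H^J$.

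Next I would exploit continuity of $\mathscr H^J$ (which follows from \cref{rem:continuityG}, \cref{rem:convergence_derivatives} and dominated convergence, using the polynomial bounds in \cref{ass:functions_psi} and the moment bounds \eqref{eq:bounded_moments}) together with $\det(\mathscr H^J(\theta_0)) \neq 0$ to fix a closed ball $\overline B(\theta_0, r) \subseteq \Theta$ on which $\mathscr H^J(\theta)$ is invertible with $\norm{\mathscr H^J(\theta)^{-1}} \le 2 \norm{\mathscr H^J(\theta_0)^{-1}} =: 2\beta$. Choosing $N_0$ large enough that $C/\sqrt{N_0}$ is small relative to $1/\beta$, a Neumann-series argument shows that for all $N > N_0$ and all $\theta \in \overline B(\theta_0, r)$ the matrix $\mathcal H_N^J(\theta)$ is also invertible with $\norm{\mathcal H_N^J(\theta)^{-1}} \le 4\beta$, and in particular $\det(\mathcal H_N^J(\theta)) \neq 0$ there. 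I would then set up the Newton map $\Phi_N(\theta) \defeq \theta - \mathcal H_N^J(\theta_0)^{-1} \mathcal G_N^J(\theta)$ (or the standard contraction map used in the proof of the inverse function theorem), verify that it maps a small ball $\overline B(\theta_0, \gamma_N)$ with radius $\gamma_N \asymp \beta \norm{\mathcal G_N^J(\theta_0)} \lesssim \beta C/\sqrt N$ into itself and is a contraction there — the contraction constant being controlled by the modulus of continuity of $\mathcal H_N^J$ times $\beta$, which can be made $\le 1/2$ by shrinking $r$ and enlarging $N_0$. Banach's fixed point theorem then yields a unique $\vartheta_N^J$ in that ball with $\mathcal G_N^J(\vartheta_N^J) = \mathbf 0$, and since $\vartheta_N^J \in \overline B(\theta_0, \gamma_N) \subseteq \overline B(\theta_0, r)$ we also get $\det(\mathcal H_N^J(\vartheta_N^J)) \neq 0$. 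The radius bound $\gamma_N \le C/\sqrt N$ is exactly \eqref{eq:difference_thetaN}.

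The main obstacle — and the step requiring the most care — is obtaining the uniform (in $\theta$) invertibility and Lipschitz control of $\mathcal H_N^J$ on a fixed ball independent of $N$: one needs the $O(1/\sqrt N)$ bound of \cref{pro:limitsG}(iv)$'$ to hold uniformly in $\theta \in \Theta$ (not just pointwise), and one needs a uniform-in-$N$ modulus of continuity for $\mathcal H_N^J$. Both follow from the polynomial bounds on $\psi_j, \psi_j', \phi_j, \phi_j'$ and their $\theta$-derivatives in \cref{ass:functions_psi} combined with the uniform moment bounds \eqref{eq:bounded_moments}, but assembling these estimates (in particular differentiating $h_j$ once more in $\theta$, or using equicontinuity) is the technical heart of the argument. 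Alternatively, one can bypass a second differentiation by using the mean value / fundamental theorem of calculus representation $\mathcal G_N^J(\theta) - \mathcal G_N^J(\theta_0) = \big(\int_0^1 \mathcal H_N^J(\theta_0 + s(\theta - \theta_0))\,\d s\big)(\theta - \theta_0)$ and arguing that the averaged Jacobian stays invertible near $\theta_0$ uniformly in $N$, then inverting directly; this is essentially the same computation but packaged more cleanly.
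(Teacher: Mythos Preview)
Your proposal is correct and complete; the route you take, however, differs from the paper's in an interesting way. You opt for a quantitative Newton--Kantorovich / Banach fixed-point argument: from $\mathcal G_N^J(\theta_0) = O(1/\sqrt N)$ and uniform invertibility of $\mathcal H_N^J$ on a fixed ball you build a contraction whose fixed point $\vartheta_N^J$ simultaneously exists and satisfies $\norm{\vartheta_N^J - \theta_0} \le \gamma_N \lesssim 1/\sqrt N$. The paper instead decouples the three conclusions: existence comes from the (qualitative) inverse function theorem, showing $\mathcal G_N^J(B_\epl(\theta_0)) \supseteq B_\eta(\mathcal G_N^J(\theta_0)) \ni 0$ for $N$ large; convergence $\vartheta_N^J \to \theta_0$ is obtained by a compactness/subsequence argument; and the rate is derived afterwards via the mean value identity $\mathscr G^J(\vartheta_N^J) - \mathcal G_N^J(\vartheta_N^J) = \bigl(\int_0^1 \mathscr H^J(\theta_0 + t(\vartheta_N^J - \theta_0))\,\d t\bigr)(\vartheta_N^J - \theta_0)$, inverting the averaged Jacobian of the \emph{limit} map $\mathscr H^J$ rather than of $\mathcal H_N^J$. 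Your approach is more direct and yields the rate in one stroke, at the price of needing a uniform-in-$N$ modulus of continuity for $\mathcal H_N^J$ (which, as you note, follows from \cref{ass:functions_psi} and \eqref{eq:bounded_moments}); the paper's approach is slightly more elementary in that the mean value step uses only the continuity of the fixed map $\mathscr H^J$, but it pays for this with the extra compactness detour. Your closing remark about bypassing the second differentiation via the integrated Jacobian is, in fact, precisely the device the paper uses for the rate.
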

\begin{proof}
We first remark that by \eqref{eq:formula_martingale} we have $\mathscr G^J(\theta_0) = 0$ and, without loss of generality, we can assume that $\det (\mathscr H^J(\theta_0)) > 0$. Let $\delta > 0$ sufficiently small, by point $(iv)'$ in \cref{pro:limitsG} and \cref{rem:convergence_derivatives} we know that $\mathcal H_N^J(\theta)$ converges to  $\mathscr H^J(\theta)$ uniformly in $\theta$ and therefore there exist $N_1 > 0$ and $\epl > 0$ such that for all $N > N_1$ and for all $\theta \in B_\epl(\theta_0)$
\begin{align}
0 &< \det (\mathscr H^J(\theta_0)) - \delta \le \det (\mathcal H_N^J(\theta)) \le \det (\mathscr H^J(\theta_0)) + \delta, \label{eq:determinant_positive} \\
0 &< \norm{\mathscr H^J(\theta_0)^{-1}} - \delta \le \norm{\mathcal H_N^J(\theta)^{-1}} \le \norm{\mathscr H^J(\theta_0)^{-1}} + \delta. \label{eq:norm_inverse_positive}
\end{align}
Hence, due to equation \eqref{eq:determinant_positive} and applying the inverse function theorem we deduce the existence of $\eta > 0$ such that
\begin{equation}
B_\eta(\mathcal G_N^J(\theta_0)) \subseteq \mathcal G_N^J(B_\epl(\theta_0)).
\end{equation}
Notice that the radius $\eta > 0$ can be chosen independently of $N > N_1$. In fact, by the proof of \cite[Theorem 2.3]{PaS95} and \cite[Lemma 1.3]{Lan93} we observe that $\eta$ is dependent on the radius $\epl$ of the ball $B_\epl(\theta_0)$ and the quantity $\norm{\mathcal H_N^J(\theta_0)^{-1}}$, which can be bounded independently of $N > N_1$ due to estimate \eqref{eq:norm_inverse_positive}. Moreover, since
\begin{equation}
\lim_{N \to \infty} \mathcal G_N^J(\theta_0) = \mathscr G^J(\theta_0) = 0,
\end{equation}
then there exists $N_2 > 0$ such that for all $N > N_2$ we have $0 \in B_\eta(\mathcal G_N^J(\theta_0))$. Therefore, setting $N_0 = \max \{ N_1, N_2 \}$ for all $N > N_0$ there exists $\vartheta_N^J \in B_\epl(\theta_0)$ such that $\mathcal G_N^J(\vartheta_N^J) = 0$, which proves the existence. Furthermore, equation \eqref{eq:determinant_positive} gives $\det (\mathcal H_N^J(\vartheta_N^J)) \neq 0$. It now remains to show estimate \eqref{eq:difference_thetaN}. Since the set $\overline{B_\epl(\theta_0)}$ is compact, there exist $\widetilde \vartheta^J \in \overline{B_\epl(\theta_0)}$ and a subsequence $\vartheta_{N_k}^J$ such that 
\begin{equation}
\lim_{k \to \infty} \vartheta_{N_k}^J = \widetilde \vartheta^J.
\end{equation}
By point $(iv)'$ in \cref{pro:limitsG} the function $\mathcal G_N^J(\theta)$ converges to $\mathscr G^J(\theta)$ uniformly in $\theta$, thus we have
\begin{equation}
0 = \lim_{k \to \infty} \mathcal G_{N_k}^J(\vartheta_{N_k}^J) = \lim_{k \to \infty} \left[ \mathcal G_{N_k}^J(\vartheta_{N_k}^J) - \mathscr G^J(\vartheta_{N_k}^J) + \mathscr G^J(\vartheta_{N_k}^J) \right] = \mathscr G^J(\widetilde \vartheta^J),
\end{equation}
which yields $\widetilde \vartheta^J = \theta_0$. This is guaranteed by the fact that $\epl$ can be previously chosen sufficiently small such that $\theta_0$ is the only zero of the function $\mathscr G^J(\theta)$ in $B_\epl(\theta_0)$. Since $\theta_0$ is the unique limit point for the subsequence $\vartheta_{N_k}^J$, it follows that the whole sequence converges. Then, applying the mean value theorem we obtain
\begin{equation}
\mathscr G^J(\vartheta^J_N) - \mathcal G^J_N(\vartheta^J_N) = \mathscr G^J(\vartheta^J_N) - \mathscr G^J(\theta_0) = \left( \int_0^1 \mathscr H^J(\theta_0 + t(\vartheta_N^J - \theta_0)) \dd t \right) (\vartheta_N^J - \theta_0),
\end{equation}
which implies
\begin{equation}
\norm{\vartheta_N^J - \theta_0} \le \norm{\left( \int_0^1 \mathscr H^J(\theta_0 + t(\vartheta_N^J - \theta_0)) \dd t \right)^{-1}} \norm{\mathscr G^J(\vartheta^J_N) - \mathcal G^J_N(\vartheta^J_N)}.
\end{equation}
Since $\vartheta_N^J$ converges to $\theta_0$ as $N$ goes to infinity, then
\begin{equation}
\lim_{N \to \infty} \norm{\left( \int_0^1 \mathscr H^J(\theta_0 + t(\vartheta_N^J - \theta_0)) \dd t \right)^{-1}} = \norm{\mathscr H^J(\theta_0)^{-1}},
\end{equation}
where the right-hand side is well defined because $\det (\mathscr H^J(\theta_0)) \neq 0$. Therefore, if $N$ is sufficiently large there exists a constant $C > 0$ independent of $N$ such that
\begin{equation}
\norm{\left( \int_0^1 \mathscr H^J(\theta_0 + t(\vartheta_N^J - \theta_0)) \dd t \right)^{-1}} \le C,
\end{equation}
which together with point $(iv)'$ in \cref{pro:limitsG} yields estimate \eqref{eq:difference_thetaN} and concludes the proof.
\end{proof}

In the next lemma we study the zero of the random function $\mathbb G_M^J(\theta)$ and its limit as $M \to \infty$. This result is almost the same as \cite[Theorem 4.3]{KeS99}.

\begin{lemma} \label{lem:MtoInf}
Let the assumptions of \cref{lem:NtoInf} hold. Then, an estimator $\widehat \vartheta_M^J$, which solves the equation $\mathbb G_M^J(\theta) = 0$ and is such that $\det (\mathbb H_M^J(\widehat \vartheta_M^J)) \neq 0$, exists with a probability tending to one as $M \to \infty$. Moreover, 
\begin{equation}
\lim_{M \to \infty} \widehat \vartheta_M^J = \theta_0, \qquad \text{in probability},
\end{equation}
and 
\begin{equation}
\lim_{M \to \infty} \sqrt{M} \left( \widehat \vartheta_M^J - \theta_0 \right) = \Lambda^J \sim \mathcal N(\mathbf 0, \Gamma_0^J), \qquad \text{in distribution},
\end{equation}
where $\Gamma_0^J$ is defined in \eqref{eq:variance_normal}.
\end{lemma}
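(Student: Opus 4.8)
The plan is to follow the classical theory of martingale estimating functions for ergodic diffusions, essentially as in \cite[Theorem 4.3]{KeS99}, the only genuinely new ingredient being the reduction to the stationary regime of the mean field dynamics. When $X_0 \sim \mu_{\theta_0}$ the law of $X_t$ equals $\rho(\cdot;\theta_0)$ for every $t$, the drift $-V'(\cdot;\alpha)-W'(\cdot;\kappa)*\rho(\cdot;\theta_0)$ is frozen, and $(X_t)$ is precisely the ergodic one-dimensional diffusion with generator $\mathcal L$, so that both \eqref{eq:formula_martingale} and the spectral decomposition of the linearized generator hold exactly. I would first argue that, since in the limit $M\to\infty$ only the stationary statistics of the sampled chain $(\widetilde X_m)$ enter, it suffices to prove the statement under $X_0\sim\mu_{\theta_0}$: the case $X_0\sim\nu$ reduces to it because $u(\cdot,t)\to\rho(\cdot;\theta_0)$ geometrically fast by \cite{Mal01}, so the transient contribution is negligible. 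The argument then splits into existence with consistency, and asymptotic normality.

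For the first part I would reproduce, almost verbatim, the quantitative inverse function theorem argument used in the proof of \cref{lem:NtoInf}, with deterministic convergence replaced by convergence in probability. By \cref{pro:limitsG}(ii) and \cref{rem:convergence_derivatives} we have $\mathbb G_M^J(\theta)\to\mathscr G^J(\theta)$ and $\mathbb H_M^J(\theta)\to\mathscr H^J(\theta)$ in $L^2$, uniformly for $\theta$ in a closed ball $\overline{B_\epl(\theta_0)}$ thanks to the polynomial bounds of \cref{ass:functions_psi} combined with \eqref{eq:bounded_moments} and the compactness of $\Theta$. Since $\mathscr G^J(\theta_0)=0$ by \eqref{eq:formula_martingale} and $\det(\mathscr H^J(\theta_0))\neq 0$ by hypothesis, on the event on which $\mathbb G_M^J$ and $\mathbb H_M^J$ are uniformly close on $\overline{B_\epl(\theta_0)}$ to their deterministic limits --- an event of probability tending to one by Markov's inequality and the above $L^2$ bounds --- the inverse function theorem (quantified as in \cite[Theorem 2.3]{PaS95} and \cite[Lemma 1.3]{Lan93}) yields a zero $\widehat \vartheta_M^J\in B_\epl(\theta_0)$ of $\mathbb G_M^J$ satisfying $\det(\mathbb H_M^J(\widehat \vartheta_M^J))\neq 0$; shrinking $\epl\downarrow 0$ then gives $\widehat \vartheta_M^J\to\theta_0$ in probability.

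For asymptotic normality I would Taylor-expand $\mathbb G_M^J$ around $\theta_0$, using $\mathbb H_M^J=\dot{\mathbb G}_M^J$ (since $h_j=\dot g_j$), to obtain on the above event
\[
\sqrt M \, (\widehat \vartheta_M^J - \theta_0) = -\left( \int_0^1 \mathbb H_M^J\big(\theta_0 + t(\widehat \vartheta_M^J - \theta_0)\big) \dd t \right)^{-1} \sqrt M \, \mathbb G_M^J(\theta_0).
\]
By consistency and the uniform convergence $\mathbb H_M^J\to\mathscr H^J$ the bracketed integral converges in probability to the invertible matrix $\mathscr H^J(\theta_0)$, so by Slutsky's theorem it remains to show that $\sqrt M \, \mathbb G_M^J(\theta_0)=M^{-1/2}\sum_{m=0}^{M-1}\xi_m$, with $\xi_m\defeq\sum_{j=1}^J g_j(\widetilde X_m,\widetilde X_{m+1};\theta_0)$, converges in distribution to $\mathcal N(\mathbf 0, \sum_{j=1}^J\sum_{k=1}^J\E^\mu[\ell_{j,k}(X_0,X_\Delta;\theta_0)])$. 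The key point is that, by \eqref{eq:formula_martingale}, $\E^{\mu_{\theta_0}}[\xi_m\mid\mathcal F_m]=0$ with $\mathcal F_m$ the natural filtration generated by $\widetilde X_0,\dots,\widetilde X_m$, so $(\xi_m)$ is a square-integrable stationary martingale difference sequence, and I would invoke a martingale central limit theorem (the one used in \cite[Theorem 4.3]{KeS99}). Its conditional-variance hypothesis follows from a short computation with \eqref{eq:formula_martingale} showing that $\E^{\mu_{\theta_0}}[\xi_m\xi_m^\top\mid\widetilde X_m]$ has stationary expectation $\sum_{j,k}\E^\mu[\ell_{j,k}(X_0,X_\Delta;\theta_0)]$, together with the ergodic theorem for $(\widetilde X_m)$; the conditional Lindeberg condition follows from $\sup_m\E[\abs{\xi_m}^{2+\delta}]<\infty$ for some $\delta>0$, a consequence of the polynomial bounds in \cref{ass:functions_psi} and the moment bounds \eqref{eq:bounded_moments}. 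Combining, $\sqrt M\,(\widehat \vartheta_M^J-\theta_0)\to\mathcal N(\mathbf 0,\Gamma_0^J)$ in distribution with $\Gamma_0^J=\mathscr H^J(\theta_0)^{-1}\left(\sum_{j,k}\E^\mu[\ell_{j,k}]\right)\mathscr H^J(\theta_0)^{-\top}$ as in \eqref{eq:variance_normal}, the overall sign being immaterial for a centred Gaussian.

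The part I expect to need the most care --- though it is ``soft'' rather than deep --- is twofold. First, the reduction to stationarity: because the mean field SDE is nonlinear, \eqref{eq:formula_martingale} and the spectral decomposition are exact only when $X_0\sim\mu_{\theta_0}$, so for general $X_0\sim\nu$ one has $\E[\xi_m\mid\mathcal F_m]=r_m$ with a nonzero remainder, and one must check that $r_m$ decays geometrically (inherited from the exponential convergence of $u(\cdot,t)$ to $\rho(\cdot;\theta_0)$), so that $M^{-1/2}\sum_m r_m\to 0$ and the martingale CLT applies to $M^{-1/2}\sum_m(\xi_m-r_m)$ with the same limiting variance. Second, the bookkeeping required to verify the hypotheses of the martingale CLT --- the ergodic convergence of the conditional covariances and the conditional Lindeberg condition --- which relies precisely on the uniform-in-$\theta$ polynomial growth assumptions of \cref{ass:functions_psi} and the uniform moment estimates \eqref{eq:bounded_moments} already established.
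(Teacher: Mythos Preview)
Your approach is essentially the same as the paper's: both reduce the statement to the classical result \cite[Theorem 4.3]{KeS99} (with the proof given in \cite[Theorem 3.2]{BiS95}, itself based on \cite[Theorem A.1]{BaS94}). The paper's proof is much terser than yours --- it simply cites these references for existence, consistency and asymptotic normality, and then adds a short separate argument showing that $\det(\mathbb H_M^J(\widehat\vartheta_M^J))\neq 0$ with high probability, using the convergence $\mathbb H_M^J(\widehat\vartheta_M^J)\to\mathscr H^J(\theta_0)$ in probability (extracted from the cited proofs) together with the assumption $\det(\mathscr H^J(\theta_0))\neq 0$. You instead obtain the nondegeneracy of the Jacobian directly from the inverse function theorem construction, which is equally valid. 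Your explicit discussion of the reduction to stationarity for the nonlinear McKean SDE is a point the paper does not spell out in this proof; it is handled implicitly via the ergodicity results of \cite{Mal01} already invoked in \cref{pro:limitsG}(ii), so your extra care there is sound but not strictly required at the level of detail the paper adopts.
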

\begin{proof}
The existence of the estimator $\widehat \vartheta_M^J$ which solves the equation $\mathbb G_M^J(\theta) = 0$ with a probability tending to one as $M \to \infty$ and its asymptotic unbiasedness and normality is given by \cite[Theorem 4.3]{KeS99}, whose prove can be found in \cite[Theorem 3.2]{BiS95} and is based on \cite[Theorem A.1]{BaS94}. Moreover, by the last line of the proof of \cite[Theorem 3.2]{BiS95} or by (A.5) in \cite[Theorem 4.3]{KeS99} we have
\begin{equation} \label{eq:convergence_Jacobian}
\lim_{M \to \infty} \mathbb H_M^J(\widehat \vartheta_M^J) = \mathscr H^J(\theta_0), \qquad \text{in probability}, 
\end{equation}
where $\det(\mathscr H^J(\theta_0)) \neq 0$ by assumption. Hence, there exists $\delta > 0$ such that if
\begin{equation}
\norm{\mathbb H_M^J(\widehat \vartheta_M^J) - \mathscr H^J(\theta_0)} \le \delta,
\end{equation}
then $\det(\mathbb H_M^J(\widehat \vartheta_M^J))) \neq 0$. Moreover, for $M$ large enough it holds
\begin{equation}
\Pr \left( \norm{\mathbb H_M^J(\widehat \vartheta_M^J) - \mathscr H^J(\theta_0)} \le \delta \right) \ge 1 - \epl_M,
\end{equation}
where $\epl_M \to 0$ as $M \to \infty$. Let us now define the events 
\begin{equation}
A_M \defeq \left\{ \exists \; \widehat \vartheta_M^J \colon \mathbb G_M^J (\widehat \vartheta_M^J) \right\} \qquad \text{and} \qquad  B_M \defeq \left\{ \norm{\mathbb H_M^J(\widehat \vartheta_M^J) - \mathscr H^J(\theta_0)} \le \delta \right\},
\end{equation}
and notice that by the first part of the proof we have $\Pr(A_M) = p_M$ where $p_M \to 1$ as $M \to \infty$. Then, using the basic properties of probability measures we obtain
\begin{equation}
\begin{aligned}
\Pr \left( A_M \cap \{ \det(\mathbb H_M^J(\widehat \vartheta_M^J)) \neq 0 \} \right) &\ge \Pr \left( A_M \cap B_M \right) \ge \Pr(A_M) + \Pr(B_M) - 1 \ge p_M - \epl_M,
\end{aligned}
\end{equation}
where the last term tends to one as $M \to \infty$, and which gives the desired result.
\end{proof}

We now consider the zero of the actual estimating function $G_{M,N}^J(\theta)$ and we first analyze its limit as $M \to \infty$.

\begin{lemma} \label{lem:N_MtoInf}
Let the assumptions of \cref{thm:main_unbiased} hold. Then, there exists $N_0 > 0$ such that for all $N > N_0$ an estimator $\widehat \theta_{M,N}^J$, which solves the system $G_{M,N}^J(\theta) = 0$, exists with a probability tending to one as $M$ goes to infinity. Moreover, there exist $\vartheta_N^J$ solving $\mathcal G_N^J(\theta) = 0$ such that
\begin{equation}
\lim_{M \to \infty} \widehat \theta_{M,N}^J = \vartheta_N^J, \qquad \text{in probability},  
\end{equation}
and 
\begin{equation}
\lim_{M \to \infty} \sqrt{M} \left( \widehat \theta_{M,N}^J - \vartheta_N^J \right) = \Lambda_N^J \sim \mathcal N(\mathbf 0, \Gamma_N^J), \qquad \text{in distribution},
\end{equation}
where $\Gamma_N^J$ is a positive definite covariance matrix such that $\lim_{N \to \infty} \Gamma_N^J = \Gamma_0^J$ where $\Gamma_0^J$ is defined in \eqref{eq:variance_normal}.
\end{lemma}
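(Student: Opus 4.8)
The plan is to fix $N>N_0$ and read $G_{M,N}^J$ as an estimating function based on the skeleton chain $(\mathbf X_{t_m})_{m\ge 0}$ of the full $N$-particle process $\mathbf X_t=(X_t^{(1)},\dots,X_t^{(N)})$, so that the argument runs parallel to that of \cref{lem:MtoInf}. The structural remark that makes this work is that, under \cref{ass:potential}, the $N$-particle system is a gradient diffusion with potential $E^N(\cdot;\theta)$ whose Hessian is bounded below by the (uniform in $\alpha$) convexity constant of $V$, uniformly in $N$, the contribution of the convex even pair potential $W$ being positive semidefinite; hence, by Bakry--Émery, $\mu^N_{\theta_0}$ satisfies a Poincaré inequality with a constant independent of $N$, and the chain $(\mathbf X_{t_m})_m$ is geometrically ergodic with an $L^2(\mu^N_{\theta_0})$-mixing rate $r\in(0,1)$ that can be chosen independent of $N$. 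With this in hand the three inputs to the general estimating-function theory underlying \cref{lem:MtoInf} (\cite[Theorem 4.3]{KeS99}, \cite[Theorem 3.2]{BiS95}, \cite[Theorem A.1]{BaS94}) are available: (a) by \cref{pro:limitsG}(iii) and \cref{rem:convergence_derivatives}, $G_{M,N}^J(\theta)\to\mathcal G_N^J(\theta)$ and $\dot G_{M,N}^J(\theta)\to\mathcal H_N^J(\theta)$ in $L^2$ as $M\to\infty$, uniformly in $\theta$ on the compact $\Theta$ by the polynomial bounds of \cref{ass:functions_psi}, the moment bounds \eqref{eq:bounded_moments} and \cref{rem:continuityG}; (b) by \cref{lem:NtoInf}, $\mathcal G_N^J$ has a zero $\vartheta_N^J$ with $\det\mathcal H_N^J(\vartheta_N^J)\neq 0$; (c) a central limit theorem for additive functionals of the geometrically ergodic chain $(\mathbf X_{t_m})_m$, whose summands have moments of every order by \eqref{eq:bounded_moments} and \cref{ass:functions_psi}. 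The only difference with \cref{lem:MtoInf} is that $G_{M,N}^J$ is \emph{not} a martingale estimating function for $\mathbf X$ (the $\phi_j$ are eigenfunctions of the mean field generator, not of the $N$-particle one), so the asymptotic variance is the long-run variance
\[
\Gamma_N^J=\mathcal H_N^J(\vartheta_N^J)^{-1}\,S_N^J\,\mathcal H_N^J(\vartheta_N^J)^{-\top},\qquad S_N^J=\sum_{\ell\in\Z}\operatorname{Cov}_{\mu^N_{\theta_0}}\!\left(F_0^N,F_\ell^N\right),
\]
where $F_\ell^N\defeq\sum_{j=1}^J g_j(\widetilde X_\ell^{(n)},\widetilde X_{\ell+1}^{(n)};\vartheta_N^J)$ and the series converges by the uniform geometric ergodicity. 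Existence of $\widehat\theta_{M,N}^J$ with $\det\dot G_{M,N}^J(\widehat\theta_{M,N}^J)\neq 0$ with probability tending to one, the convergence $\widehat\theta_{M,N}^J\to\vartheta_N^J$ in probability, and the stated central limit theorem then follow exactly as in the last paragraph of the proof of \cref{lem:MtoInf}; positive definiteness of $\Gamma_N^J$ for $N$ large follows from the invertibility of $\mathcal H_N^J(\vartheta_N^J)$ (ensured by \eqref{eq:technical_assumption}) together with the convergence of $S_N^J$, established below, to a positive definite matrix.

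It remains to show $\lim_{N\to\infty}\Gamma_N^J=\Gamma_0^J$. By \cref{lem:NtoInf}, $\vartheta_N^J\to\theta_0$ and, using also point $(iv)'$ in \cref{pro:limitsG} and \cref{rem:convergence_derivatives}, $\mathcal H_N^J(\vartheta_N^J)\to\mathscr H^J(\theta_0)$, which is invertible by the hypotheses of \cref{thm:main_unbiased}; so it suffices to prove $S_N^J\to\sum_{j=1}^J\sum_{k=1}^J\E^\mu[\ell_{j,k}(X_0,X_\Delta;\theta_0)]$. I would split $S_N^J$ into the lag-zero term and the tail $\sum_{|\ell|\ge1}$. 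The lag-zero term equals $\E^{\mu^N_{\theta_0}}[F_0^N (F_0^N)^\top]$ (the estimating function being centered at $\vartheta_N^J$), and by the $L^2$ convergence \eqref{eq:convergence2meanFieldLimit} of a single particle to the mean field limit, together with $\vartheta_N^J\to\theta_0$, the moment bounds \eqref{eq:bounded_moments} and the polynomial growth of \cref{ass:functions_psi} (which upgrade the convergence in probability to convergence of the expectations), it tends to $\E^\mu[(\sum_j g_j(X_0,X_\Delta;\theta_0))(\sum_k g_k(X_0,X_\Delta;\theta_0))^\top]=\sum_{j,k}\E^\mu[\ell_{j,k}(X_0,X_\Delta;\theta_0)]$, the last equality by the tower property and \eqref{eq:formula_martingale}. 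For the tail, the same argument shows that each fixed-lag covariance $\operatorname{Cov}_{\mu^N_{\theta_0}}(F_0^N,F_\ell^N)$ converges to $\sum_{j,k}\E^\mu[g_j(X_0,X_\Delta;\theta_0)\,g_k(X_{\ell\Delta},X_{(\ell+1)\Delta};\theta_0)]$, which vanishes for $\ell\ge1$: conditioning on $\mathcal F_{\ell\Delta}$ and recalling that the mean field limit started at $\mu_{\theta_0}$ is a time-homogeneous diffusion with generator $\mathcal L$, \eqref{eq:formula_martingale} gives $\E^\mu[\phi_k(X_{(\ell+1)\Delta};\theta_0)-e^{-\lambda_k(\theta_0)\Delta}\phi_k(X_{\ell\Delta};\theta_0)\mid\mathcal F_{\ell\Delta}]=0$; the case $\ell\le-1$ is symmetric. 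Interchanging the limit in $N$ with the sum over $\ell$ is justified by a bound $\norm{\operatorname{Cov}_{\mu^N_{\theta_0}}(F_0^N,F_\ell^N)}\le C r^{|\ell|}$ with $C$ and $r\in(0,1)$ independent of $N$; dominated convergence then yields $S_N^J\to\sum_{j,k}\E^\mu[\ell_{j,k}]$ and hence $\Gamma_N^J\to\Gamma_0^J$.

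The step I expect to be the main obstacle is precisely this last uniform-in-$N$ exponential decay $\norm{\operatorname{Cov}_{\mu^N_{\theta_0}}(F_0^N,F_\ell^N)}\le C r^{|\ell|}$ of the lagged covariances, since it is what makes the long-run variance $S_N^J$ collapse, in the limit, onto the one-step quantity $\sum_{j,k}\E^\mu[\ell_{j,k}]$ entering $\Gamma_0^J$ in \eqref{eq:variance_normal}. It combines the uniform (in $N$) spectral gap of the $N$-particle dynamics---available here thanks to the convexity assumptions in \cref{ass:potential} via Bakry--Émery---with a uniform control, in every $L^p(\mu^N_{\theta_0})$, of the growth of the estimating-function summands; all the remaining ingredients are either quoted from \cite{KeS99,BiS95,BaS94} or already established earlier in \cref{sec:proofs}.
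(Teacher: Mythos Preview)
Your approach is the same skeleton as the paper's: invoke \cref{lem:NtoInf} to produce $\vartheta_N^J$ with $\mathcal G_N^J(\vartheta_N^J)=0$ and $\det\mathcal H_N^J(\vartheta_N^J)\neq 0$, then run the general estimating-function argument of \cite[Theorem 3.2]{BiS95} and \cite[Theorem A.1]{BaS94} on the $N$-particle skeleton chain, exactly as in \cref{lem:MtoInf}. The paper's proof is essentially a two-sentence pointer to these references, closing with ``the convergence of the covariance matrix $\Gamma_N^J$ is implied by \eqref{eq:convergence2meanFieldLimit}''.

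Where you genuinely add content is in confronting a point the paper leaves implicit: $G_{M,N}^J$ is \emph{not} a martingale estimating function for the $N$-particle chain (the $\phi_j$ are mean-field eigenfunctions), so the CLT variance is the long-run variance $S_N^J=\sum_{\ell\in\Z}\operatorname{Cov}_{\mu^N_{\theta_0}}(F_0^N,F_\ell^N)$ rather than the one-step matrix. Your argument that the lagged terms vanish in the $N\to\infty$ limit---each fixed lag tends to zero by \eqref{eq:formula_martingale} at the mean-field level, and the interchange of limit and sum is licensed by a uniform-in-$N$ geometric mixing bound via Bakry--\'Emery under \cref{ass:potential}---is precisely what is needed to land on the one-step form $\sum_{j,k}\E^\mu[\ell_{j,k}]$ entering $\Gamma_0^J$. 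The paper's single appeal to \eqref{eq:convergence2meanFieldLimit} covers the lag-zero piece and the convergence $\mathcal H_N^J(\vartheta_N^J)\to\mathscr H^J(\theta_0)$, but does not by itself dispose of the tail; your route makes this step explicit and correctly isolates the uniform spectral-gap estimate as the substantive ingredient.
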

\begin{proof}
First, by \cref{lem:NtoInf} there exists $N_0 > 0$ such that for all $N > N_0$ there exists $\vartheta_N^J$ such that
\begin{equation}
\mathcal G_N^J(\vartheta_N^J) = 0 \qquad \text{and} \qquad \det (\mathcal H_N^J(\vartheta_N^J)) \neq 0.
\end{equation}
Then, the results are equivalent to \cref{lem:MtoInf} and therefore the argument follows the same steps of its proof, which is given in detail in \cite[Theorem 3.2]{BiS95} and is based on \cite[Theorem A.1]{BaS94}. Finally, the convergence of the covariance matrix $\Gamma_N^J$ is implied by \eqref{eq:convergence2meanFieldLimit}.
\end{proof}

We then study the limit of the zero of $G_{M,N}^J(\theta)$ as $N \to \infty$.

\begin{lemma} \label{lem:M_NtoInf}
Let the assumptions of \cref{lem:N_MtoInf} hold and let $M \ll N$. Then, the estimator $\widehat \theta_{M,N}^J$ satisfies for some $\widehat \vartheta_M^J$ solving $\mathbb G_M^J(\theta) = 0$ and for a constant $C > 0$ independent of $M$ and $N$
\begin{equation} \label{eq:difference_thetaMN}
\E \left[ \norm{\widehat \theta_{M,N}^J - \widehat \vartheta_M^J} \right] \le C \sqrt{\frac{M}{N}}.
\end{equation}
\end{lemma}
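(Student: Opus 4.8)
The plan is to relate the two estimators by a single first-order expansion and thereby reduce \eqref{eq:difference_thetaMN} to controlling the discrepancy $G_{M,N}^J-\mathbb G_M^J$ between the estimating function built from the observed particle and the one built from its mean field limit, evaluated at a point close to $\theta_0$. The main technical device is a \emph{good event} $\Omega_{M,N}$ on which everything is well behaved. By \cref{lem:MtoInf,lem:N_MtoInf} (and \cref{rem:convergence_derivatives}), for $N$ large both $\widehat\theta_{M,N}^J$ and $\widehat\vartheta_M^J$ exist and lie in a fixed ball $\overline{B_\epl(\theta_0)}\subseteq\Theta$, and the Jacobian of $G_{M,N}^J$, namely $H_{M,N}^J(\theta)\defeq\frac1M\sum_{m=0}^{M-1}\sum_{j=1}^J h_j(\widetilde X_m^{(n)},\widetilde X_{m+1}^{(n)};\theta)$, is invertible on $\overline{B_\epl(\theta_0)}$ with $\norm{H_{M,N}^J(\cdot)^{-1}}$ bounded by a constant independent of $M,N$; this follows because $H_{M,N}^J\to\mathscr H^J$ uniformly near $\theta_0$ (as in \cref{pro:limitsG} and \cref{rem:convergence_derivatives}) and $\det\mathscr H^J(\theta_0)\neq0$ by \eqref{eq:technical_assumption}. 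Using the $L^q$ bounds of \cref{pro:limitsG} and the central limit theorem in \cref{lem:MtoInf}, $1-\Pr(\Omega_{M,N})$ can be made to decay faster than any polynomial in $M$, hence — since $M\ll N$ — faster than $\sqrt{M/N}$.

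On $\Omega_{M,N}$, applying the mean value theorem to $G_{M,N}^J$ between $\widehat\vartheta_M^J$ and $\widehat\theta_{M,N}^J$ and using $G_{M,N}^J(\widehat\theta_{M,N}^J)=\mathbf 0$ together with $\mathbb G_M^J(\widehat\vartheta_M^J)=\mathbf 0$ yields
\begin{equation*}
\widehat\theta_{M,N}^J-\widehat\vartheta_M^J=-\left(\int_0^1 H_{M,N}^J\!\left(\widehat\vartheta_M^J+t\big(\widehat\theta_{M,N}^J-\widehat\vartheta_M^J\big)\right)\dd t\right)^{-1}\left(G_{M,N}^J(\widehat\vartheta_M^J)-\mathbb G_M^J(\widehat\vartheta_M^J)\right),
\end{equation*}
whence $\norm{\widehat\theta_{M,N}^J-\widehat\vartheta_M^J}\le C\,\sup_{\theta\in\overline{B_\epl(\theta_0)}}\norm{G_{M,N}^J(\theta)-\mathbb G_M^J(\theta)}$ on $\Omega_{M,N}$. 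On the complement $\Omega_{M,N}^\compl$ one uses the crude bound $\norm{\widehat\theta_{M,N}^J-\widehat\vartheta_M^J}\le\operatorname{diam}(\Theta)$, whose contribution $\operatorname{diam}(\Theta)\,(1-\Pr(\Omega_{M,N}))$ is negligible relative to $\sqrt{M/N}$ by the first paragraph.

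It then remains to bound $\E\big[\sup_{\theta\in\overline{B_\epl(\theta_0)}}\norm{G_{M,N}^J(\theta)-\mathbb G_M^J(\theta)}\big]$. Writing this difference as $\frac1M\sum_{m=0}^{M-1}\sum_{j=1}^J\big(g_j(\widetilde X_m^{(n)},\widetilde X_{m+1}^{(n)};\theta)-g_j(\widetilde X_m,\widetilde X_{m+1};\theta)\big)$, I would expand each summand via the mean value theorem in the spatial variables, use the polynomial bounds on $\phi_j,\phi_j',\psi_j,\psi_j'$ uniform in $\theta$ from \cref{ass:functions_psi}, apply Hölder's inequality, and invoke the moment bounds \eqref{eq:bounded_moments} together with the coupling estimate \eqref{eq:convergence2meanFieldLimit} at each observation time $t_m$. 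This is exactly the computation already performed in \cref{pro:limitsG}$(i)'$, now carried out uniformly in $\theta$ over the compact ball $\overline{B_\epl(\theta_0)}$; collecting the contributions of the $M$ observation times produces the bound $C\sqrt{M/N}$, and combining with the previous step finishes the proof.

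The main obstacle is conceptual rather than computational: the evaluation point $\widehat\vartheta_M^J$ is random and is statistically coupled — through the shared Brownian motion in \eqref{eq:convergence2meanFieldLimit} — to the very quantity $G_{M,N}^J-\mathbb G_M^J$ that one wants to estimate there, so the pointwise-in-$\theta$ bound of \cref{pro:limitsG}$(i)'$ is insufficient and must be upgraded to a supremum over a neighbourhood of $\theta_0$, while at the same time the inverse Jacobian factor has to be controlled uniformly in $M,N$. The good-event decomposition described above is precisely what reconciles these two requirements; once it is in place, every remaining estimate is a routine repetition of arguments already established in \cref{sec:proofs}.
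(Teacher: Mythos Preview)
Your overall strategy --- a good-event decomposition, mean value theorem linking $\widehat\theta_{M,N}^J$ to $\widehat\vartheta_M^J$, and a uniform-in-$\theta$ version of \cref{pro:limitsG}$(i)'$ --- is close in spirit to the paper's, but the control of the bad event has a genuine gap.

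You assert that $1-\Pr(\Omega_{M,N})$ ``can be made to decay faster than any polynomial in $M$, hence --- since $M\ll N$ --- faster than $\sqrt{M/N}$''. Neither clause is justified. First, \cref{lem:MtoInf} gives only ``probability tending to one'' for the existence of $\widehat\vartheta_M^J$ and for the Jacobian condition, with no rate; the CLT there concerns the scaled error $\sqrt M(\widehat\vartheta_M^J-\theta_0)$, not tail probabilities of the events defining $\Omega_{M,N}$. Second, even granting $o(M^{-k})$ decay for every $k$, the implication fails: $M\ll N$ allows $N$ to grow arbitrarily fast relative to $M$ (e.g.\ $N=e^M$), so $\sqrt{M/N}$ can decay faster than any power of $M$. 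Since the bad-event contribution is $\operatorname{diam}(\Theta)\,(1-\Pr(\Omega_{M,N}))$ and must be $\le C\sqrt{M/N}$, you need a bound on $1-\Pr(\Omega_{M,N})$ \emph{in terms of $N$}, which your event does not deliver.

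The paper avoids this by a different choice of good event. It sets $A=\{\norm{\mathbb X^{(n)}-\mathbb X}<\epl\}$, where $\mathbb X^{(n)},\mathbb X\in\R^{M+1}$ are the observation vectors, and obtains $\Pr(A^\compl)\le (C/\epl)\sqrt{M/N}$ directly from \eqref{eq:convergence2meanFieldLimit} via Markov's inequality; the $\sqrt M$ arises here because the vector has $M{+}1$ entries. On $A$, an implicit function theorem argument applied to $f(\theta,x)=\frac1M\sum_{m,j}g_j(x_m,x_{m+1};\theta)$ forces $\widehat\theta_{M,N}^J\in B_\delta(\widehat\vartheta_M^J)$, and the mean value theorem (on $\mathbb G_M^J$, not $G_{M,N}^J$) together with a H\"older/moment argument controlling $\E[\norm{(\int_0^1\mathbb H_M^J)^{-1}}^{q'}\mid A]$ via $\norm{M^{-1}}\le\norm{M}^{p-1}/\abs{\det M}$ gives $C/\sqrt N$ on the good event. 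Note also that the paper explicitly works conditionally on the existence of $\widehat\vartheta_M^J$, $\widehat\theta_{M,N}^J$ and on $\det(\mathbb H_M^J(\widehat\vartheta_M^J))\neq0$; your attempt to make the bound fully unconditional asks for more than what is actually claimed.

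A minor correction: the computation in \cref{pro:limitsG}$(i)'$ already averages over $M$ and yields $C/\sqrt N$, not $C\sqrt{M/N}$; the $\sqrt M$ factor in the lemma enters solely through the bad-event probability, not through ``collecting the contributions of the $M$ observation times''.
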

\begin{proof}
The existence of the estimators $\widehat \vartheta_M^J$, such that $\mathbb G_M^J(\widehat \vartheta_M^J) = 0$ and $\det (\mathbb H_M^J(\widehat \vartheta_M^J)) \neq 0$, and $\widehat \theta_{M,N}^J$, such that $G_{M,N}^J(\widehat \theta_{M,N}^J) = 0$, with a probability tending to one as $M$ goes to infinity is guaranteed by \cref{lem:N_MtoInf,lem:MtoInf}, respectively. Then, all the following events are considered as conditioned on the existence of $\widehat \vartheta_M^J$ and $\widehat \theta_{M,N}^J$ and the fact that $\det (\mathbb H_M^J(\widehat \vartheta_M^J)) \neq 0$. Let us now define the function $f \colon \R^p \times \R^{M+1} \to \R^p$ as
\begin{equation}
f(\theta, x) = \frac1M \sum_{m=0}^{M-1} \sum_{j=1}^J g_j(x_m, x_{m+1}; \theta),
\end{equation}
where $x_m$ denotes the $m$-th component of the vector $x \in \R^{m+1}$, and the vectors $\mathbb X^{(n)}$ and $\mathbb X$ whose $m$-th components for $m = 0, \dots, M$ are given by
\begin{equation}
\mathbb X_m^{(n)} = \widetilde X_m^{(n)} \qquad \text{and} \qquad \mathbb X_m = \widetilde X_m,
\end{equation}
where $\{ \widetilde X_m^{(n)} \}_{m=0}^M$ is the set of observations and $\{ \widetilde X_m \}_{m=0}^M$ are the corresponding realizations of the mean field limit. Notice that $f \in C^1(\Theta \times \R^{M+1})$ due to \cref{ass:functions_psi,rem:continuityG} and by definition we have
\begin{equation}
f(\widehat \vartheta_M^J, \mathbb X) = 0 \qquad \text{and} \qquad \det \left( \frac{\partial f}{\partial \theta}(\widehat \vartheta_M^J, \mathbb X) \right) \neq 0.
\end{equation}
Therefore, applying the implicit function theorem there exist $\epl,\delta > 0$ and a continuously differentiable function $F \colon B_\epl(\mathbb X) \to B_\delta(\widehat \vartheta_M^J)$ such that $f(F(x),x) = 0$ for all $x \in B_\epl(\mathbb X)$. Hence, if $\mathbb X^{(n)}$ is close enough to $\mathbb X$ then there must be one $\widehat \theta_{M,N}^J \in B_\delta(\widehat \vartheta_M^J)$ such that $F(\mathbb X^{(n)}) = \widehat \theta_{M,N}^J$. Then, employing Jensen's inequality and by estimate \eqref{eq:convergence2meanFieldLimit} we have
\begin{equation}
\E \left[ \norm{\mathbb X^{(n)} - \mathbb X} \right] = \E \left[ \left( \sum_{m=0}^M \abs{ \widetilde X_m^{(n)} - \widetilde X_m}^2 \right)^{1/2} \right] \le \left( \sum_{m=0}^M \E \left[ \abs{ \widetilde X_m^{(n)} - \widetilde X_m}^2 \right] \right)^{1/2} \le C \sqrt{\frac{M}{N}},
\end{equation}
where the constant $C$ is independent of $M$ and $N$. Therefore, letting $\epl > 0$ and applying Markov's inequality we obtain
\begin{equation} \label{eq:bound_complement}
\Pr \left( \norm{\mathbb X^{(n)} - \mathbb X} \ge \epl \right) \le \frac1\epl \E \left[ \norm{\mathbb X^{(n)} - \mathbb X} \right] \le \frac{C}{\epl} \sqrt{\frac{M}{N}}.
\end{equation} 
Defining the event $A = \{ \norm{\mathbb X^{(n)} - \mathbb X} < \epl \}$ and using the law of total expectation conditioning on $A$ we deduce
\begin{equation}
\E \left[ \norm{\widehat \theta_{M,N}^J - \widehat \vartheta_M^J} \right] = \E \left[ \norm{\widehat \theta_{M,N}^J - \widehat \vartheta_M^J} | A \right] \Pr(A) + \E \left[ \norm{\widehat \theta_{M,N}^J - \widehat \vartheta_M^J} | A^\compl \right] \Pr(A^\compl),
\end{equation}
which since $\widehat \theta_{M,N}^J, \widehat \vartheta_M^J \in \Theta$, a compact set, and due to estimate \eqref{eq:bound_complement} implies
\begin{equation} \label{eq:total_expectation}
\E \left[ \norm{\widehat \theta_{M,N}^J - \widehat \vartheta_M^J} \right] \le \E \left[ \norm{\widehat \theta_{M,N}^J - \widehat \vartheta_M^J} | A \right] + C \sqrt{\frac{M}{N}}.
\end{equation}
It now remains to study the first term in the right-hand side. Applying the mean value theorem we obtain
\begin{equation}
\begin{aligned}
\mathbb G_M^J(\widehat \theta_{M,N}^J) - G^J_{M,N}(\widehat \theta_{M,N}^J) &= \mathbb G_M^J(\widehat \theta_{M,N}^J) - \mathbb G_M^J(\widehat \vartheta_M^J) \\
&= \left( \int_0^1 \mathbb H_M^J(\widehat \vartheta_M^J + t(\widehat \theta_{M,N}^J - \widehat \vartheta_M^J)) \dd t \right) (\widehat \theta_{M,N}^J - \widehat \vartheta_M^J),
\end{aligned}
\end{equation}
which implies
\begin{equation}
\norm{\widehat \theta_{M,N}^J - \widehat \vartheta_M^J} \le \norm{\left( \int_0^1 \mathbb H_M^J(\widehat \vartheta_M^J + t(\widehat \theta_{M,N}^J - \widehat \vartheta_M^J)) \dd t \right)^{-1}} \norm{\mathbb G_M^J(\widehat \theta_{M,N}^J) - G^J_{M,N}(\widehat \theta_{M,N}^J)}.
\end{equation}
Using Hölder inequality with exponents $q \in (1,2)$ and its conjugate $q'$ such that $1/q + 1/q' = 1$ we have
\begin{equation} \label{eq:decomposition_final}
\E \left[ \norm{\widehat \theta_{M,N}^J - \widehat \vartheta_M^J} | A \right] \le Q \E \left[ \norm{\mathbb G_M^J(\widehat \theta_{M,N}^J) - G^J_{M,N}(\widehat \theta_{M,N}^J)}^q | A \right]^{1/q},
\end{equation}
where
\begin{equation}
Q = \E \left[ \norm{\left( \int_0^1 \mathbb H_M^J(\widehat \vartheta_M^J + t(\widehat \theta_{M,N}^J - \widehat \vartheta_M^J)) \dd t \right)^{-1}}^{q'} | A \right]^{1/q'}.
\end{equation}
Employing the inequality $\E[Y|A] \le \E[Y]/\Pr(A)$, which holds for any positive random variable $Y$, point $(i)'$ in \cref{pro:limitsG} and estimate \eqref{eq:bound_complement}, the second term in the right-hand side can be bounded by
\begin{equation} \label{eq:bound_afterQ}
\E \left[ \norm{\mathbb G_M^J(\widehat \theta_{M,N}^J) - G^J_{M,N}(\widehat \theta_{M,N}^J)}^q | A \right]^{1/q} \le \frac{C}{\sqrt N} \left( \frac{1}{1 - C \sqrt{\frac{M}{N}}} \right)^{1/q} \le \frac{C}{\sqrt N},
\end{equation}
where the last inequality is justified by the fact that $M \ll N$ and by changing the value of the constant $C$. We now have to bound the first term $Q$ in the right-hand side of equation \eqref{eq:decomposition_final}. Employing the inequality $\norm{M^{-1}} \le \norm{M}^{p-1}/\abs{\det(M)}$, which holds for any square nonsingular matrix $M \in \R^{p \times p}$, we have
\begin{equation}
Q \le \E \left[ \frac{\norm{\int_0^1 \mathbb H_M^J(\widehat \vartheta_M^J + t(\widehat \theta_{M,N}^J - \widehat \vartheta_M^J)) \dd t}^{q'(p-1)}}{\abs{\det \left( \int_0^1 \mathbb H_M^J(\widehat \vartheta_M^J + t(\widehat \theta_{M,N}^J - \widehat \vartheta_M^J)) \dd t \right)}^{q'}} | A \right].
\end{equation}
Since we are conditioning on the event $A$, by the first part of the proof, we know that $\norm{\widehat \theta_{M,N}^J - \widehat \vartheta_M^J} \le \delta$ and, by taking $\epl$ sufficiently small, we can always find $\delta$ small enough, but still finite, such that the absolute value of the determinant in the denominator is lower bounded by a constant independent of $M$ and $N$ because $\det (\mathbb H_M^J(\widehat \vartheta_M^J)) \neq 0$ and by \eqref{eq:convergence_Jacobian} it converges in probability to $\det(\mathscr H^J(\theta_0))$, which is invertible. Hence, applying Jensen's inequality we obtain
\begin{equation}
\begin{aligned}
Q &\le C \E \left[ \norm{\int_0^1 \mathbb H_M^J(\widehat \vartheta_M^J + t(\widehat \theta_{M,N}^J - \widehat \vartheta_M^J)) \dd t}^{q'(p-1)} | A \right] \\
&\le C \E \left[ \int_0^1 \norm{\mathbb H_M^J(\widehat \vartheta_M^J + t(\widehat \theta_{M,N}^J - \widehat \vartheta_M^J))}^{q'(p-1)} \dd t | A \right],
\end{aligned}
\end{equation}
which due to \cref{lem:boundedG}, \cref{rem:convergence_derivatives}, the property $\E[Y|A] \le \E[Y]/\Pr(A)$, which holds for any positive random variable $Y$, and estimate \eqref{eq:bound_complement} yields
\begin{equation}
Q \le \frac{C}{\Pr(A)} \int_0^1 \E \left[ \norm{\mathbb H_M^J(\widehat \vartheta_M^J + t(\widehat \theta_{M,N}^J - \widehat \vartheta_M^J))}^{q'(p-1)} \right] \dd t \le C,
\end{equation}
which together with equations \eqref{eq:total_expectation}, \eqref{eq:decomposition_final} and \eqref{eq:bound_afterQ} gives the desired result.
\end{proof}

The results of this section are summarized in the following diagram
\begin{center}
\begin{tikzpicture}
\node at (-5,0) (W) {$\widehat \theta_{M,N}^J$};
\node at (0,1) (N) {$\widehat \vartheta_M^J$};
\node at (0,-1) (S) {$\vartheta_N^J$};
\node at (5,0) (E) {$\theta_0$};
\draw [->] (W) -- (N);
\node at (-2.5,0.75) {\emph{in $L^1$}};
\node at (-2,0.25) {$N \to \infty$};
\draw [->] (N) -- (E);
\node at (2.5,0.75) {\emph{in $\Pr$}};
\node at (2,0.25) {$M \to \infty$};
\draw [->] (W) -- (S);
\node at (-2.5,-0.75) {\emph{in $\Pr$}};
\node at (-2,-0.25) {$M \to \infty$};
\draw [->] (S) -- (E);
\node at (2,-0.25) {$N \to \infty$};
\end{tikzpicture}
\end{center}
where $\Pr$ stands for convergence in probability.

\begin{remark}
All the previous results only prove the existence of such estimators with high probability and do not guarantee their uniqueness. However, as we will see in the next section, any of these estimators converge to the exact value of the unknown.
\end{remark}

\subsection{Proof of the main theorems}

In this section we finally present the proofs of the main results of this work, i.e., \cref{thm:main_unbiased,thm:main_rate,thm:main_normal}.

\begin{proof}[Proof of \cref{thm:main_unbiased}]
First, by \cref{lem:N_MtoInf} we deduce the existence of $N_0 > 0$ such that for all $N > N_0$ the estimator $\widehat \theta_{M,N}^J$ exists with a probability tending to one as $M$ goes to infinity. Then, we prove separately equations \eqref{eq:limit_1}, \eqref{eq:limit_2} and \eqref{eq:limit_3}. \\
\textbf{Proof of \eqref{eq:limit_1}.} By \cref{lem:N_MtoInf,lem:NtoInf} we have
\begin{equation}
\lim_{N \to \infty} \lim_{M \to \infty} \widehat \theta_{M,N}^J = \lim_{N \to \infty} \vartheta_N^J = \theta_0, \qquad \text{in probability},
\end{equation}
which proves \eqref{eq:limit_1}. \\
\textbf{Proof of \eqref{eq:limit_2}.} By \cref{lem:M_NtoInf} the estimator $\widehat \theta_{M,N}^J$ converges to $\widehat \vartheta_M^J$ in $L^1$ as $N$ goes to infinity and hence in probability. Therefore, applying \cref{lem:MtoInf} we obtain
\begin{equation}
\lim_{M \to \infty} \lim_{N \to \infty} \widehat \theta_{M,N}^J = \lim_{M \to \infty} \widehat \vartheta_M^J = \theta_0, \qquad \text{in probability},
\end{equation}
which shows \eqref{eq:limit_2}. \\
\textbf{Proof of \eqref{eq:limit_3}.} We introduce the following decomposition
\begin{equation}
\widehat \theta_{M,N}^J - \theta_0 = (\widehat \theta_{M,N}^J - \widehat \vartheta_M^J) + (\widehat \vartheta_M^J - \theta_0) \eqdef Q_1 + Q_2,
\end{equation}
where $\widehat \vartheta_M^J$ is defined in \cref{lem:MtoInf} and due to \cref{lem:M_NtoInf} the first quantity satisfies
\begin{equation} \label{eq:Q1_N}
\E \left[ \abs{Q_1} \right] \le C \sqrt{\frac{M}{N}},
\end{equation}
with the constant $C$ independent of $M$ and $N$. Therefore, since $M = o(N)$, estimate \eqref{eq:Q1_N} together with \cref{lem:MtoInf} and the fact that convergence in $L^1$ implies convergence in probability gives the desired result \eqref{eq:limit_3} and ends the proof.
\end{proof}

\begin{proof}[Proof of \cref{thm:main_rate}]
The existence of the estimator $\widehat \theta_{M,N}^J$ is given by \cref{thm:main_unbiased}. Then, we prove separately equations \eqref{eq:rate_1}, \eqref{eq:rate_2} and \eqref{eq:rate_3}. \\
\textbf{Proof of \eqref{eq:rate_1}.} Let $\vartheta_N$ be defined in \cref{lem:NtoInf}. Using basic properties of probability measures we have
\begin{equation} \label{eq:probability_inequalities}
\begin{aligned}
\Pr \left( \Xi_{M,N}^J > K_\epl \right) &= \Pr \left( \norm{\widehat \theta_{M,N}^J - \theta_0} > \left( \frac{1}{\sqrt M} + \frac{1}{\sqrt N} \right) K_\epl \right) \\
&\le \Pr \left( \norm{\widehat \theta_{M,N}^J - \vartheta_N} + \norm{\vartheta_N - \theta_0} > \left( \frac{1}{\sqrt M} + \frac{1}{\sqrt N} \right) K_\epl \right),
\end{aligned}
\end{equation}
which implies
\begin{equation}
\begin{aligned}
\Pr \left( \Xi_{M,N}^J > K_\epl \right) &\le \Pr \left( \norm{\widehat \theta_{M,N}^J - \vartheta_N} > \left( \frac{1}{\sqrt M} + \frac{1}{\sqrt N} \right) \frac{K_\epl}{2} \right) \\
&\quad + \Pr \left( \norm{\vartheta_N - \theta_0} > \left( \frac{1}{\sqrt M} + \frac{1}{\sqrt N} \right) \frac{K_\epl}{2} \right) \\
&\le \Pr \left( \sqrt M \norm{\widehat \theta_{M,N}^J - \vartheta_N} > \frac{K_\epl}{2} \right) + \Pr \left( \norm{\vartheta_N - \theta_0} > \frac{K_\epl}{2 \sqrt N} \right),
\end{aligned}
\end{equation}
and we now study two terms in the right-hand side separately. First, letting $M$ and $N$ go to infinity by \cref{lem:N_MtoInf} we obtain
\begin{equation}
\lim_{N \to \infty} \lim_{M \to \infty} \Pr \left( \sqrt M \norm{\widehat \theta_{M,N}^J - \vartheta_N} > \frac{K_\epl}{2} \right) = \Pr \left( \norm{\Lambda^J} > \frac{K_\epl}{2} \right),
\end{equation}
where the right-hand side can be made arbitrarily small by taking $K_\epl > 0$ sufficiently large. Moreover, we have
\begin{equation}
\Pr \left( \norm{\vartheta_N - \theta_0} > \frac{K_\epl}{2 \sqrt N} \right) = \E \left[ \mathbbm 1_{\left\{ \norm{\vartheta_N - \theta_0} > \frac{K_\epl}{2 \sqrt N} \right\}} \right],
\end{equation}
where the right-hand side is identically equal to zero if we set $K_\epl > 2C$, where the constant $C$ is given by \cref{lem:NtoInf}. Hence, for all $\epl > 0$ we can take $K_\epl > 0$ sufficiently large such that 
\begin{equation}
\lim_{N \to \infty} \lim_{M \to \infty} \Pr \left( \Xi_{M,N}^J > K_\epl \right) < \epl,
\end{equation}
which proves \eqref{eq:rate_1}. \\
\textbf{Proof of \eqref{eq:rate_2}.} Let $\widehat \vartheta_M$ be defined in \cref{lem:MtoInf}. Repeating a procedure similar to \eqref{eq:probability_inequalities} and applying Markov's inequality we get
\begin{equation}
\begin{aligned}
\Pr \left( \Xi_{M,N}^J > K_\epl \right) &\le \Pr \left( \norm{\widehat \theta_{M,N}^J - \widehat \vartheta_M} > \left( \frac{1}{\sqrt M} + \frac{1}{\sqrt N} \right) \frac{K_\epl}{2} \right) + \Pr \left( \sqrt M \norm{\widehat \vartheta_M - \theta_0} > \frac{K_\epl}{2} \right) \\
&\le \frac{2 \sqrt{MN}}{K_\epl (\sqrt M + \sqrt N)} \E \left[ \norm{\widehat \theta_{M,N}^J - \widehat \vartheta_M} \right] + \Pr \left( \sqrt M \norm{\widehat \vartheta_M - \theta_0} > \frac{K_\epl}{2} \right),
\end{aligned}
\end{equation}
and we now study two terms in the right-hand side separately. First, by \cref{lem:MtoInf} we have
\begin{equation}
\lim_{M \to \infty} \Pr \left( \sqrt M \norm{\widehat \vartheta_M - \theta_0} > \frac{K_\epl}{2} \right) = \Pr \left( \norm{\Lambda^J} > \frac{K_\epl}{2} \right),
\end{equation}
where the right-hand side can be made arbitrarily small by taking $K_\epl > 0$ sufficiently large. Moreover, by \cref{lem:M_NtoInf} we have
\begin{equation} \label{eq:rate_lastLine}
\frac{2 \sqrt{MN}}{K_\epl (\sqrt M + \sqrt N)} \E \left[ \norm{\widehat \theta_{M,N}^J - \widehat \vartheta_M} \right] \le \frac{2CM}{K_\epl (\sqrt M + \sqrt N)},
\end{equation}
where the constant $C$ is independent of $M$ and $N$. Hence, for all $\epl > 0$ we can take $K_\epl > 0$ sufficiently large such that 
\begin{equation}
\lim_{M \to \infty} \lim_{N \to \infty} \Pr \left( \Xi_{M,N}^J > K_\epl \right) < \epl,
\end{equation}
which shows \eqref{eq:rate_2}. \\
\textbf{Proof of \eqref{eq:rate_3}.} Equation \eqref{eq:rate_3} is obtained following verbatim the proof of \eqref{eq:rate_2} in the previous step and using the fact that $M = o(\sqrt N)$ to show that the right-hand side in equation \eqref{eq:rate_lastLine} vanishes.
\end{proof}

\begin{proof}[Proof of \cref{thm:main_normal}]
The existence of the estimator $\widehat \theta_{M,N}^J$ is given by \cref{thm:main_unbiased}. Then, let us introduce the following decomposition 
\begin{equation}
\sqrt M \left( \widehat \theta_{M,N}^J - \theta_0 \right) = \sqrt M \left( \widehat \theta_{M,N}^J - \widehat \vartheta_M^J \right) + \sqrt M \left( \widehat \vartheta_M^J - \theta_0 \right),
\end{equation}
where $\widehat \vartheta_M^J$ is defined in \cref{lem:MtoInf}. We now study two terms in the right-hand side separately. By \cref{lem:M_NtoInf} we have
\begin{equation}
\sqrt M \E \left[ \norm{\widehat \theta_{M,N}^J - \widehat \vartheta_M^J} \right] \le C \frac{M}{\sqrt N},
\end{equation}
where the constant $C$ is independent of $M$ and $N$, hence since $M = o(\sqrt N)$ by hypothesis we obtain
\begin{equation} \label{eq:normal_limit_1}
\lim_{M,N \to \infty} \sqrt M \left( \widehat \theta_{M,N}^J - \widehat \vartheta_M^J \right) = 0, \qquad \text{in probability}.
\end{equation}
Moreover, by \cref{lem:MtoInf} we know that 
\begin{equation} \label{eq:normal_limit_2}
\lim_{M \to \infty} \sqrt M \left( \widehat \vartheta_M^J - \theta_0 \right) = \Lambda^J \sim \mathcal N(0, \Gamma_0^J), \qquad \text{in distribution},
\end{equation}
where the covariance matrix $\Gamma_0^J$ is defined in \eqref{eq:variance_normal}. Finally, limits \eqref{eq:normal_limit_1} and \eqref{eq:normal_limit_2} together with Slutsky's theorem imply the desired result.
\end{proof}

\section{Conclusion}

In this work we considered inference problems for large systems of exchangeable interacting particles. When the number of  particles is large, then the path of a single particle is well approximated by its mean field limit. The limiting mean field SDE is on the one hand more complex because it is a nonlinear SDE (in the sense of McKean), but on the other hand more tractable from a computational viewpoint as it reduces an $N$-dimensional SDE to a one dimensional one. Our aim was to infer unknown parameters of the dynamics, in particular of the confining and interaction potentials, from a set of discrete observations of a single particle. We propose a novel estimator which is obtained by computing the zero of a martingale estimating function based on the eigenvalues and the eigenfunctions of the generator of the mean field limit, where the law of the process is replaced by the (unique) invariant measure of the mean field dynamics. We showed both theoretically and numerically the asymptotic unbiasedness and normality of our estimator in the limit of infinite data and particles, providing also a rate of convergence towards the true value of the unknown parameter. In particular, we observed that these properties hold true if the number of particles is much larger than the number of observations. Even though our theoretical results require uniqueness of the steady state for the mean field dynamics, our numerical experiments suggest that our method works well even when phase transitions are present, i.e., when there are more than one stationary states. Moreover, we compared our estimator with the maximum likelihood estimator, demonstrating that our approach is more robust with respect to small values of the sampling rate. We believe, therefore, that the inference methodology proposed and analyzed in this paper can be very efficient when learning parameters in mean field SDE models from data. 

The work presented in this paper can be extended in several interesting directions. First, the main limitation of our methodology is the fact that in order to construct the martingale estimating function we have to know the functional form of the invariant measure of the mean field SDE, possibly parameterized in terms of a finite number of moments. There are many interesting examples of mean field PDEs where the self-consistency equation cannot be solved analytically or, at least, its solution depends on the unknown parameters in the model. Therefore, it would be interesting to lift this assumption by first learning the invariant measure from data and then applying our martingale eigenfunction estimator approach. This leads naturally to our second objective, namely the extension of our methodology to a nonparametric setting, i.e., when the functional form of the confining and interaction potentials are unknown. Thirdly, we want to obtain more detailed information on the computational complexity of the proposed algorithm, in particular when more eigenfunctions are needed for our martingale estimator and when we are in higher dimensions in space. We will return to these problems in future work.

\subsection*{Acknowledgements} 

The work of GAP was partially funded by the EPSRC, grant number EP/P031587/1, and by JPMorgan Chase \& Co. AZ is partially supported by the Swiss National Science Foundation, under grant No. 200020\_172710. AZ is grateful to Massimo Sorella for fruitful discussions.

\bibliographystyle{siamnodash}
\bibliography{biblio}

\end{document}